\def\squarebox#1{\hbox to #1{\hfill\vbox to #1{\vfill}}}
\newtheorem{Thm}{Theorem}[section]
\newtheorem{lem}{Lemma}[section]
\numberwithin{equation}{section}
\newcommand{\bel}{\begin{equation} \label}
\newcommand{\ee}{\end{equation}}
\newcommand{\re}{\mathfrak R}
\newcommand{\pd}{\partial}
\newcommand{\R}{\mathbb{R}}
\def\epsilon{\varepsilon}
\def\phi {\varphi}
\newtheorem{rem}{Remark}[section]
\newtheorem{prop}{Proposition}[section]
\providecommand{\abs}[1]{\left\lvert#1\right\rvert}
\providecommand{\norm}[1]{\left\lVert#1\right\rVert}
\numberwithin{equation}{section}
\renewcommand{\leq}{\leqslant}
\renewcommand{\geq}{\geqslant}
\providecommand{\abs}[1]{\left\lvert#1\right\rvert}
\providecommand{\norm}[1]{\left\lVert#1\right\rVert}
\def\beq{\begin{equation}}
\def\eeq{\end{equation}}
\newcommand{\bea}{\begin{eqnarray}}
\newcommand{\eea}{\end{eqnarray}}
\newcommand{\beas}{\begin{eqnarray*}}
\newcommand{\eeas}{\end{eqnarray*}}
\begin{document}

\title[Determination of  singular time-dependent  coefficients for  wave equations]{Determination of  singular time-dependent  coefficients for  wave equations from full and partial data}

\author{Guanghui Hu}
\address{Beijing Computational Science Research Center, Building 9, East Zone, ZPark II, No.10 Xibeiwang East Road, Haidian District, Beijing 100193, China.}
\email{hu@csrc.ac.cn}
\author{Yavar Kian}
\address{Aix Marseille Univ, Universit\'e de Toulon, CNRS, CPT, Marseille, France.}
\email{yavar.kian@univ-amu.fr}

\begin{abstract}
We study   the problem of  determining  uniquely  a time-dependent singular potential $q$, appearing in   the wave equation $\partial_t^2u-\Delta_x u+q(t,x)u=0$ in $Q=(0,T)\times\Omega$ with $T>0$ and $\Omega$  a  $ \mathcal C^2$ bounded domain of $\R^n$, $n\geq2$. We start by considering the unique determination of some singular time-dependent coefficients
from  observations on $\partial Q$. Then, by weakening the singularities of the set of admissible coefficients, we manage to reduce the set of data that still guaranties unique recovery of such a coefficient. To our best knowledge, this paper is the first claiming unique determination of unbounded time-dependent coefficients, which is motivated by the problem of determining  general nonlinear terms appearing in nonlinear wave equations.\\\medskip \\
{\bf Keywords:} Inverse problems, wave equation, time dependent coefficient, singular coefficients, Carleman estimate.\\

\medskip
\noindent
{\bf Mathematics subject classification 2010 :} 35R30, 	35L05.
\end{abstract}


\maketitle

\section{Introduction}
\label{sec-intro}
\setcounter{equation}{0}
\subsection{Statement of the problem }
Let  $\Omega$   be a $\mathcal C^2$ bounded domain  of $\R^n$, $n\geq2$, and  fix  $\Sigma=(0,T)\times\partial\Omega$, $Q=(0,T)\times\Omega$ with $0<T<\infty$.  We consider the wave equation \begin{equation}\label{wave}\partial_t^2u-\Delta_x u+q(t,x)u=0,\quad (t,x)\in Q,\end{equation}
where   the potential $q$ is assumed to be an unbounded real valued coefficient.  In this paper we
seek unique determination of  $q$ from observations  of  solutions of \eqref{wave} on $\partial Q$.
\subsection{Obstruction to uniqueness and set of full data for our problem}
Let $\nu$ be the outward unit normal vector to $\partial\Omega$, $\partial_\nu=\nu\cdot\nabla_x$ the normal derivative and from now on let $\Box$ be  the differential operators $\Box:=\partial_t^2-\Delta_x$. It has been proved by \cite{RS1},  that, for $T>\textrm{Diam}(\Omega)$, the data\begin{equation}\label{data1} \mathcal A_{q}=\{(u_{|\Sigma},\partial_\nu u_{|\Sigma}):\ u\in L^2(Q),\ \Box u+qu=0,\ u_{|t=0}=\partial_tu_{|t=0}=0\}\end{equation}
determines uniquely a time-independent potential $q$.   On the other hand,   due to domain of dependence arguments, there is  no hope to recover  even smooth time-dependent coefficients restricted  to the set $$D=\{(t,x)\in Q:\  t\in(0,\textrm{Diam}(\Omega)/2)\cup (T-\textrm{Diam}(\Omega)/2,T),\ \textrm{dist}(x,\partial\Omega)> \min(t,T-t)\}$$ from the data $\mathcal A_{q}$ (see \cite[Subsection 1.1]{Ki2}). Therefore, even  when $T$ is large, for the global recovery  of general time-dependent coefficients the information on the bottom $t=0$  and the top $t=T$ of $Q$ are unavoidable.
 Thus, for our problem the  extra information on $\{t=0\}$ and  $\{t=T\}$, of solutions $u$ of \eqref{wave}, can not be completely removed. In this context, we introduce the set of data
\[C_{q}=\{(u_{|\Sigma},u_{t=0},\partial_t u_{|t=0},\partial_\nu u_{|\Sigma}, u_{|t=T},\partial_tu_{|t=T}):\ u\in L^2(Q),\ \Box u+qu=0 \}.\]
and we recall that \cite{I} proved that, for $q\in L^\infty(Q)$, the data $C_q$ determines uniquely $q$. From now on we will refer to $C_q$ as the set of full data for our problem and we mention that \cite{Ki3,Ki2,Ki4} proved recovery of bounded time-dependent coefficients $q$ from partial data corresponding to partial knowledge of the set $C_q$. The goal of the present paper is to prove recovery of singular  time-dependent coefficients $q$  from full and partial data.

\subsection{Physical and mathematical motivations }

Physically speaking, our inverse problem consists of determining   unstable properties such as some rough time evolving  density of an inhomogeneous medium from disturbances generated on  the boundary and at  initial time, and measurements of the response. The goal is to determine the function $q$ which describes  the property of the medium. Moreover, singular time-dependent coefficients can be associated to some unstable time-evolving phenomenon that can not be modeled  by bounded  time-dependent coefficients or time independent coefficients.

Let us also observe that, according to \cite{CK2,I2}, for parabolic equations the recovery
of   nonlinear terms, appearing in some suitable nonlinear  equations,  can be reduced to the determination of  time-dependent coefficients. In this context, the information that allows to recover the nonlinear term is transferred, throw a linearization process,  to a time-dependent coefficient depending explicitly on some solutions of the nonlinear problem. In contrast to parabolic equations, due to the weak regularity of solutions, it is not clear that this process allows to transfer the recovery of nonlinear terms, appearing in a nonlinear wave equation, to a bounded time-dependent coefficient. Thus, in order to expect an application of the strategy set by \cite{CK2,I2} to  the recovery of nonlinear terms for nonlinear wave equations, it seems important to consider recovery of singular time-dependent coefficients.

\subsection{known results }

The problem of determining  coefficients appearing in hyperbolic equations   has  attracted many attention over the last decades.  This problem has been stated in terms of recovery of a time-independent potential $q$ from the set $\mathcal A_{q}$. For instance,  \cite{RS1} proved that  $\mathcal A_{q}$ determines uniquely a time-independent  potential $q$, while \cite{E1} proved that  partial boundary observations are sufficient for this problem.   We recall also that \cite{BD,BJY,Ki,SU2} studied the  stability issue for this problem.

 Several authors considered also the problem of determining time-dependent coefficients appearing in wave equations. In \cite{St}, the authors shown that the knowledge of scattering data determines uniquely a smooth   time-dependent potential.   In \cite{RS}, the authors studied  the recovery of a time-dependent potential $q$ from  data on the boundary $\pd\Omega$ for all time given by  $(u_{|\R\times\partial\Omega}, \partial_\nu u_{|\R\times\partial\Omega})$ of forward solutions of \eqref{wave}  on the infinite time-space cylindrical domain $\R_t\times\Omega$  instead of $Q$. As for \cite{RR}, the authors  considered this  problem at finite time on $Q$ and they proved the recovery of $q$ restricted to some strict subset of $Q$ from $\mathcal A_{q}$.  Isakov established in \cite[Theorem 4.2]{I} unique global determination of  general time-dependent potentials on the whole domain $Q$ from the important set of full data $C_{q}$. By applying  a result of unique continuation for wave equation, which is valid only for coefficients analytic with respect to the time variable (see for instance the counterexample of \cite{AlBa}), \cite{E2} proved unique recovery of time-dependent coefficients from  partial knowledge of the data $\mathcal A_{q}$.  In \cite{S}, the author extended the result of \cite{RS}. Moreover, \cite{W} established the stable   recovery of  X-ray transforms of time-dependent potentials and \cite{BB,A} proved log-type stability in the determination of time-dependent coefficients with data similar to  \cite{I} and \cite{RR}.  In \cite{Ki3,Ki2,Ki4}, the author proved   uniqueness and stability in the recovery of several time-dependent coefficients  from partial knowledge of the full set of data $C_q$. It seems that the results of \cite{Ki3,Ki2,Ki4} are stated with the weakest conditions so far that allows to recover general bounded time-dependent coefficients. More recently, \cite{KiOk} proved unique determination of such coefficients on Riemannian manifolds. We mention also the work of \cite{SY} who determined some information about time-dependent coefficients  from the Dirichlet-to-Neumann map  on a cylinder-like Lorentzian manifold related
to the wave equation.
We refer to the work \cite{CK,CKS,FK,GK,KS} for determination of time-dependent coefficients for fractional diffusion, parabolic and Schr\"odinger equations have been considered.

In all the above mentioned results, the authors considered time-dependent coefficients that are at least bounded. There have been several  works dealing with recovery of non-smooth coefficients appearing in elliptic equations such as \cite{CR,CT,FKS,HT}. Nevertheless, to our best knowledge, except the present paper, there is no work in the mathematical literature dealing with the recovery of singular time-dependent coefficients $q$ even from the important set of full data $C_q$.

\subsection{Main results}

 The main purpose of this paper is to prove  the unique global  determination of  time-dependent   and unbounded coefficient  $q$ from partial knowledge of the observation of solutions on $\partial Q=(\{0\}\times\overline{\Omega})\cup\Sigma\cup(\{T\}\times\overline{\Omega})$. More precisely, we would like to prove unique recovery of unbounded coefficient $q\in L^{p_1}(0,T;L^{p_2}(\Omega))$, $p_1\geq 2$, $p_2\geq n$, from partial knowledge of the full set of data $C_q$. We start by considering the recovery of some general unbounded coefficient $q$ from restriction of $C_q$ on the bottom  $t=0$ and top  $t=T$ of the time-space cylindrical domain $Q$. More precisely, for $q\in L^{p_1}(0,T;L^{p_2}(\Omega))$, $p_1\geq 2$, $p_2\geq n$, we consider the recovery of $q$ from the set of data
\[C_{q}(0)=\{(u_{|\Sigma},\partial_t u_{|t=0},\partial_\nu u, u_{|t=T},\partial_tu_{|t=T}):\ u\in \mathcal K(Q),\ \Box u+qu=0,\ u_{|t=0}=0 \},\]
or the set of data
\[C_{q}(T)=\{(u_{|\Sigma},u_{t=0},\partial_t u_{|t=0},\partial_\nu u, u_{|t=T}):\ u\in \mathcal K(Q),\ \Box u+qu=0 \},\]
where $\mathcal K(Q)=\mathcal C([0,T];H^1(\Omega))\cap \mathcal C^1([0,T];L^2(\Omega))$. In addition, assuming that $T>\textrm{Diam}(\Omega)$, we prove the recovery of $q$ from the set of data
\[C_{q}(0,T)=\{(u_{|\Sigma},\partial_t u_{|t=0},\partial_\nu u, u_{|t=T}):\ u\in \mathcal K(Q),\ \Box u+qu=0,\ u_{|t=0}=0 \}.\]
Our first main result can be stated as follows
\begin{Thm}\label{thm3}
Let $p_1\in(2,+\infty)$, $p_2\in(n,+\infty)$ and let $q_1,\ q_2 \in L^{p_1}(0,T;L^{p_2}(\Omega))$.
Then, either of the following conditions:
\begin{equation}\label{thm3a}C_{q_1}(0)=C_{q_2}(0),\end{equation}
\begin{equation}\label{thm3b}C_{q_1}(T)=C_{q_2}(T),\end{equation}
implies that  $q_1=q_2$. Moreover, assuming that $T>\textrm{Diam}(\Omega)$, the condition
\begin{equation}\label{thm3c}C_{q_1}(0,T)=C_{q_2}(0,T)\end{equation}
implies that $q_1=q_2$.
\end{Thm}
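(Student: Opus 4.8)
The plan is to reduce all three statements to a single orthogonality identity and then to extract from it the Fourier transform of $q_1-q_2$ on a full open cone of covectors. Given two potentials $q_1,q_2$ with the relevant data sets equal, take a solution $u_1$ of $\Box u_1+q_1u_1=0$ in the admissible class and let $u_2$ be the solution of $\Box u_2+q_2u_2=0$ sharing the same observed data; then $w=u_1-u_2$ solves $\Box w+q_2w=(q_2-q_1)u_1$, and all the recorded Cauchy data of $w$ vanish, so $w$ and $\partial_\nu w$ vanish on $\Sigma$ while $w$ has the prescribed vanishing traces at $t=0$ and $t=T$. Pairing with a solution $v$ of the adjoint equation $\Box v+q_2v=0$ and integrating by parts over $Q$, the volume term becomes $\int_Q(q_1-q_2)u_1\overline v\,\d t\,\d x$ (the sign being immaterial), while every boundary contribution is controlled by the recorded data. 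The three cases differ only in which boundary contributions survive: for $C_q(0)$ both traces at $t=T$ are known, so $v$ is free; for $C_q(T)$ the trace $\partial_t w|_{t=T}$ is unknown, forcing $v|_{t=T}=0$; and for $C_q(0,T)$ one has simultaneously $u_1|_{t=0}=0$ and $v|_{t=T}=0$.

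Next I would build oscillating geometric optics solutions adapted to the singular potential. Fix $\omega\in\mathbb S^{n-1}$ and the light-like phase $\phi(t,x)=t+x\cdot\omega$, which satisfies $\Box e^{i\rho\phi}=0$. For a covector $\xi=(\xi_0,\xi')$ subject to the compatibility condition $\xi_0=\omega\cdot\xi'$ I seek
\begin{equation*}
u_1=e^{i\rho\phi}\,(a_1+R_1),\qquad v=e^{i\rho\phi}e^{-i(\xi_0t+\xi'\cdot x)}\,(a_2+R_2),
\end{equation*}
where $a_1,a_2$ solve the first-order transport equation attached to $\phi$ and the remainders $R_j$ are produced so that $\norm{R_j}\to0$ as $\rho\to+\infty$. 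The compatibility $\xi_0=\omega\cdot\xi'$ cancels the $O(\rho)$ part of the eikonal defect, leaving an $O(1)$ error that, together with the potential term $q_2v$, is absorbed into $R_2$; hence $v$ is a genuine solution. Constructing $R_j$ with vanishing limit is the step where the hypotheses $p_1>2$ and $p_2>n$ are used: the source of the remainder equation is $q_j$ times a bounded amplitude, and a Carleman estimate in the corresponding weighted $L^2$ (or $L^{p'}$--$L^p$) spaces provides a gain in $\rho$ that beats $\norm{q_j}_{L^{p_1}(0,T;L^{p_2}(\Omega))}$, the threshold $p_2>n$ guaranteeing that multiplication by $q_j$ maps the relevant Sobolev space into $L^2$.

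Inserting these solutions into the identity gives $u_1\overline v=e^{i(\xi_0t+\xi'\cdot x)}(a_1\overline{a_2}+o(1))$, so, after extending $q_1-q_2$ by zero outside $Q$ and letting $\rho\to+\infty$, one obtains $\widehat{q_1-q_2}(\xi_0,\xi')=0$ for every $\omega\in\mathbb S^{n-1}$ and every $\xi'$ with $\xi_0=\omega\cdot\xi'$. As $\omega$ sweeps the sphere and $\xi'$ ranges over $\R^n$, the recovered covectors fill the space-like set $\{\,(\tau,\kappa):\abs\tau\leq\abs\kappa\,\}$, which has nonempty interior; since $q_1-q_2$ has compact support its Fourier transform is real-analytic, so vanishing on an open set forces $\widehat{q_1-q_2}\equiv0$, i.e. $q_1=q_2$. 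This disposes of \eqref{thm3a} and \eqref{thm3b}, the only difference being that one amplitude is cut off in time so that $u_1|_{t=0}=0$ (for $C_q(0)$) or $v|_{t=T}=0$ (for $C_q(T)$), the corresponding remainder being solved with the matching homogeneous Cauchy condition; this removes one degree of freedom but still leaves enough directions $\omega$ to fill an open cone.

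The main obstacle is the third case $C_q(0,T)$, where $u_1|_{t=0}=0$ and $v|_{t=T}=0$ must hold simultaneously, so a purely algebraic time cutoff of a linear-phase solution is no longer admissible. Here I would replace the plain geometric optics construction by solutions built from a Carleman estimate with a weight $e^{\pm\rho\psi}$ whose level sets are tuned to the light cone; the hypothesis $T>\textrm{Diam}(\Omega)$ is exactly what allows such a pseudoconvex weight to dominate the whole cylinder $Q$ and to render the unobserved contribution at $t=T$ and the initial contribution at $t=0$ negligible in the limit. Controlling these boundary terms uniformly in $\rho$ while preserving the remainder estimates of the previous paragraph is the delicate point; once it is settled, the same Fourier-analytic inversion yields $q_1=q_2$.
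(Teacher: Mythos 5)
Your overall skeleton (orthogonality identity, light-like phases with the compatibility $\xi_0=\omega\cdot\xi'$, recovery of $\widehat{q_1-q_2}$ on the space-like cone $\{|\tau|\leq|\kappa|\}$, analyticity from compact support) matches the paper, and your bookkeeping of which traces of $w=v-u_2$ vanish in each of the three cases is correct. But there is a genuine gap in the mechanism you propose for imposing the vanishing trace at $t=0$ (resp.\ $t=T$) on one of the geometric optics solutions. You suggest ``a purely algebraic time cutoff of a linear-phase solution \ldots the corresponding remainder being solved with the matching homogeneous Cauchy condition.'' This fails in both possible readings. If the cutoff is in $t$ alone, the amplitude no longer solves the transport equation $\partial_ta\mp\omega\cdot\nabla_xa=0$, so the conjugation produces a source of size $O(\rho)$ (the term $2i\rho(\partial_t a\mp\omega\cdot\nabla_x a)e^{i\rho\phi}$), and no estimate with a one-power gain in $\rho$ can make the remainder tend to zero; the construction collapses. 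If instead the cutoff is taken in the characteristic variable $t+x\cdot\omega$ (which does respect the transport equation), then forcing $u_1|_{t=0}=0$ forces the amplitude to vanish on the whole slab $\{t+x\cdot\omega\leq\sup_{y\in\Omega}y\cdot\omega\}\cap Q$, a nonempty open set; the limit of the orthogonality identity then gives the Fourier transform of $q$ \emph{multiplied by this cutoff}, not of $q$, so you recover $q_1=q_2$ only away from a neighborhood of $t=0$ — precisely the domain-of-dependence obstruction, and no sweep over $\omega$ repairs it, since every direction excludes a fixed neighborhood of the bottom of $Q$. Note also that cases \eqref{thm3a} and \eqref{thm3b} carry no hypothesis $T>\mathrm{Diam}(\Omega)$, so there is no extra time to play with.

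The missing idea is the paper's two-term (``reflected'') ansatz, taken from the construction of Kian--Oksanen: e.g.\ $u_2=e^{i\lambda(t+x\cdot\omega)}-e^{i\lambda(-t+x\cdot\omega)}+R_{2,\lambda}$, which vanishes identically at $t=0$ while both amplitudes stay $\equiv1$ on all of $Q$; the cross terms carry factors $e^{\pm2i\lambda t}$ and die by Riemann--Lebesgue, so the full Fourier transform of $q$ on $Q$ survives (symmetrically, phases $t+x\cdot\omega$ and $(2T-t)+x\cdot\omega$ cancel at $t=T$ for case \eqref{thm3b}). Your treatment of the third case is likewise not a proof: you openly defer the ``delicate point'' to an unspecified pseudoconvex Carleman weight, whereas the paper handles \eqref{thm3c} with the same elementary machinery — two-term solutions on \emph{both} sides, amplitudes cut off in the characteristic variables $\pm t+(x-x_0)\cdot\omega$ chosen $\equiv1$ on $Q$, and the hypothesis $T>\mathrm{Diam}(\Omega)$ entering only to make the supports of the two reflected amplitudes $a_{1,2}$, $a_{2,2}$ disjoint, which kills the single non-oscillating cross term $e^{-2i\lambda T}\int_Qa_{1,2}a_{2,2}$. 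Finally, for these oscillating (purely imaginary phase) solutions the paper does not use a Carleman estimate at all for the remainders, as you propose; it proves the decay \eqref{dego} by an energy argument applied to $\int_0^tR_{j,\lambda}(s,\cdot)\,ds$, H\"older in time (this is where $p_1>2$ enters), Gronwall, and Riemann--Lebesgue — a point worth noting since a Carleman weight gives nothing for an oscillatory conjugation.
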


We  consider also the recovery of a time-dependent   and unbounded coefficient  $q$ from restriction of the data $C_{q}$ on the  lateral boundary $\Sigma$. Namely, for all $\omega\in\mathbb S^{n-1}=\{x\in\R^n:\ \abs{x}=1\}$ we introduce the $\omega$-shadowed and $\omega$-illuminated faces
\[\partial\Omega_{+,\omega}=\{x\in\partial\Omega:\ \nu(x)\cdot\omega>0\},\quad \partial\Omega_{-,\omega}=\{x\in\partial\Omega:\ \nu(x)\cdot\omega\leq0\}\]
of $\partial\Omega$. Here, for all $k\in\mathbb N^*$, $\cdot$ denotes the scalar product in $\R^k$ given by
\[ x\cdot y=x_1y_1+\ldots +x_ky_k,\quad x=(x_1,\ldots,x_k)\in \R^k,\ y=(y_1,\ldots,y_k)\in \R^k.\]
We define also the parts of the lateral boundary $\Sigma$ taking the form
$\Sigma_{\pm,\omega}=(0,T)\times \partial\Omega_{\pm,\omega}$.
We fix $\omega_0\in \mathbb S^{n-1}$ and we consider $V=(0,T)\times V'$  with $V'$  a closed  neighborhood of  $\partial\Omega_{-,\omega_0}$ in $\partial\Omega$.
Then, we study the recovery of $q\in L^{p}(Q)$, $p>n+1$,  from the data
\[C_{q}(T,V)=\{(u_{|\Sigma},u_{|t=0},\partial_t u_{|t=0},\partial_\nu u_{|V}, u_{|t=T}):\ u\in H^1(Q),\ \Box u+qu=0\}\]
and the determination of a time-dependent   coefficient   $q\in L^{\infty}(0,T;L^p(\Omega))$, $p>n$,  from the data
\[C_{q}(0,T,V)=\{(u_{|\Sigma},\partial_t u_{|t=0},\partial_\nu u_{|V}, u_{|t=T}):\ u\in L^2(0,T;H^1(\Omega)),\ \Box u+qu=0,\ u_{|t=0}=0\}.\]
We refer to Section 2 for the definition of this set. Our main result can be  stated as follows.

\begin{Thm}\label{thm1}
Let $p\in(n+1,+\infty)$ and let $q_1,\ q_2 \in L^p(Q)$.
Then, the condition
\begin{equation}\label{thm1a}C_{q_1}(T,V)=C_{q_2}(T,V)\end{equation}
implies that  $q_1=q_2$.
\end{Thm}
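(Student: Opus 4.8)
The plan is to convert the equality of data into an orthogonality identity and then test it against geometric optics solutions concentrated along light rays, the unobserved part of the boundary being absorbed by a Carleman estimate. Set $q=q_1-q_2$, extended by $0$ to $\R^{1+n}$. Given any $u_2\in H^1(Q)$ with $\Box u_2+q_2u_2=0$, hypothesis \eqref{thm1a} furnishes $u_1$ with $\Box u_1+q_1u_1=0$ whose traces on $\Sigma$, Cauchy data at $t=0$, Dirichlet trace at $t=T$ and Neumann trace on $V$ coincide with those of $u_2$. Thus $u:=u_1-u_2$ solves $\Box u+q_1u=-q\,u_2$ with $u_{|t=0}=\partial_tu_{|t=0}=0$, $u_{|\Sigma}=0$, $u_{|t=T}=0$ and $\partial_\nu u_{|V}=0$. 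Pairing with any $v$ solving $\Box v+q_1v=0$ and integrating by parts (recall $q_1$ is real) gives
\[\int_Q q\,u_2\,\overline{v}\,\d x\,\d t=\int_{\Sigma_{+,\omega_0}}\partial_\nu u\,\overline{v}\,\d\sigma\,\d t-\int_\Omega \partial_tu_{|t=T}\,\overline{v}_{|t=T}\,\d x,\]
where I used $\partial_\nu u=0$ on $V\supset\Sigma_{-,\omega_0}$ together with $\Sigma\setminus V\subset\Sigma_{+,\omega_0}$. The right-hand side involves only the two unobserved quantities $\partial_\nu u$ on $\Sigma_{+,\omega_0}$ and $\partial_tu_{|t=T}$.

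Next I would construct, for $\omega\in\mathbb S^{n-1}$ close to $\omega_0$ and large parameters $s,\sigma$, two families of solutions sharing the null oscillation $e^{i\sigma(t-x\cdot\omega)}$ and carrying conjugate linear Carleman weights. Writing $\varphi=x\cdot\omega_0+\gamma(t)$, so that $\partial_\nu\varphi=\nu\cdot\omega_0$ on $\Sigma$, I take
\[u_2=e^{s\varphi+i\sigma(t-x\cdot\omega)}(a_2+R_2),\qquad v=e^{-s\varphi+i\sigma(t-x\cdot\omega)}(a_1+R_1),\]
solving $\Box u_2+q_2u_2=0$ and $\Box v+q_1v=0$, with principal amplitudes obeying the transport equation $(\partial_t+\omega\cdot\nabla_x)a_j=0$ along the ray of direction $(1,\omega)$, and with $v$ prescribed so that $(a_1)_{|t=T}=0$, hence $v_{|t=T}=0$. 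In $u_2\overline{v}$ both the weights $e^{\pm s\varphi}$ and the oscillations cancel, leaving $a_2\overline{a_1}$ plus lower order. The remainders solve $\Box R_j+q_jR_j=-q_ja_j$ in the conjugated variables; since $q_j\in L^p(Q)$ with $p>n+1$, an $L^{p'}\to L^p$ resolvent-type estimate for the conjugated operator (valid in this range) yields $\norm{R_j}\to0$ as $\sigma\to\infty$. This subcriticality is exactly what turns the unbounded potential into a controllable perturbation.

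The Carleman estimate then enters through $u$, which vanishes on all of $\Sigma$ so that only its normal derivative survives there. With weight $e^{-s\varphi}$ it gives
\[s\int_{\Sigma_{+,\omega_0}}(\nu\cdot\omega_0)\,\abs{\partial_\nu u}^2e^{-2s\varphi}\,\d\sigma\,\d t\ \lesssim\ \norm{e^{-s\varphi}q\,u_2}_{L^2(Q)}^2+(\text{interior terms}),\]
the sign on $\Sigma_{+,\omega_0}$ being favorable because $\partial_\nu\varphi>0$ there, the contribution from $\Sigma_{-,\omega_0}\subset V$ vanishing since $\partial_\nu u_{|V}=0$, and the term $q_1u$ absorbed into the left side for $s$ large. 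As $u_2$ carries $e^{s\varphi}$, the source $e^{-s\varphi}q\,u_2$ is bounded in $L^2(Q)$ uniformly in $s$, so the boundary integral is $O(1/s)$. Estimating the unobserved boundary term by Cauchy--Schwarz against the trace of $v$ (whose weight $e^{-s\varphi}$ matches) and using $v_{|t=T}=0$, the whole right-hand side of the identity tends to $0$, so that
\[\lim_{\sigma,s\to\infty}\int_Q q\,u_2\,\overline{v}\,\d x\,\d t=0.\]
The \emph{main obstacle} is precisely this step: establishing a Carleman estimate for $\Box$ with a boundary term of the correct sign that remains valid for unbounded $L^p$ potentials, and matching its scale in $s$ with the geometric optics scale in $\sigma$. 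This is the technical core of the argument and the source of the threshold $p>n+1$.

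Combining the limits, the left-hand side converges to $\int_Q q\,a_2\overline{a_1}\,\d x\,\d t$. Since $a_2\overline{a_1}$ is constant along the rays $r\mapsto(t_0+r,x_0+r\omega)$, a free choice of the transport data shows that the light-ray transform
\[\int_{\R} q(t_0+r,\,x_0+r\omega)\,\d r=0\]
vanishes for every such ray, with $\omega$ in a neighborhood of $\omega_0$. Finally, $q$ being compactly supported in $Q$, its spacetime Fourier transform $\widehat{q}$ is entire by Paley--Wiener; the vanishing of these light-ray integrals forces $\widehat{q}$ to vanish on an open subset of $\R^{1+n}$, whence $\widehat{q}\equiv0$ and $q_1=q_2$.
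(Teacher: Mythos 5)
Your proposal reproduces the paper's overall architecture: reduce \eqref{thm1a} to the orthogonality identity with residual boundary terms on $\Sigma\setminus V\subset\Sigma_{+,\epsilon,\omega}$ and at $t=T$ (the paper's identity \eqref{t3a}), test against exponentially weighted geometric optics solutions whose weights cancel in the product, absorb the unobserved boundary contributions through a boundary Carleman estimate valid for $L^p$ potentials, and conclude by analyticity of $\mathcal Fq$. However, two steps are genuinely unjustified as written. First, you cannot arrange $v_{|t=T}=0$ by prescribing $(a_1)_{|t=T}=0$: any construction of the remainder $R_1$ by duality/Hahn--Banach or a ``resolvent-type'' estimate yields interior bounds only and gives no control of the trace $R_1(T,\cdot)$; building $R_1$ instead from a backward initial-boundary value problem would restore the trace but destroys the decay in the large parameter, since energy estimates for the conjugated operator degrade as $s\to\infty$. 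The paper does not kill this term at all: it keeps $\int_\Omega\partial_tu(T,x)u_1(T,x)\,dx$ in the identity and dominates it by the term $\lambda\int_\Omega e^{-2\lambda(T+\omega\cdot x)}\abs{\partial_tu(T,x)}^2dx$ which its Carleman estimate \eqref{c1b} places on the \emph{left-hand side} --- this is exactly why Theorem \ref{c1} is proved with that specific $t=T$ boundary term. Your argument, which discards this term by fiat, has a hole precisely where the paper does extra work.

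Second, the asserted ``$L^{p'}\to L^p$ resolvent-type estimate for the conjugated operator (valid in this range)'' is not a known off-the-shelf fact for the wave operator (unlike the elliptic Kenig--Ruiz--Sogge theory), and it is the entire technical core you are assuming. The paper avoids it: it proves an $H^{-1}_\lambda$ Carleman estimate (Lemma \ref{l2}) and absorbs the unbounded potential via Sobolev embedding and interpolation, paying a factor $\lambda^{r-1}$ with $r=(n+1)/p<1$ --- this elementary absorption is where the threshold $p>n+1$ actually enters, and the remainders are then obtained by Hahn--Banach duality with the bounds \eqref{CGO11}. A further slip: you fix the weight $\varphi=x\cdot\omega_0+\gamma(t)$ while letting the ray direction $\omega$ vary near $\omega_0$; the cross term between weight and phase in the conjugated operator is then of size $s\sigma\,(1-\omega\cdot\omega_0)$, which is uncontrolled --- the weight direction must travel with $\omega$ (the paper does this, using $\partial\Omega_{-,\epsilon,\omega}\subset V'$ for all $\omega$ with $\abs{\omega-\omega_0}\leq\epsilon$), and the null-structure condition pins $\gamma$ down rather than leaving it free. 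Finally, your two-parameter scheme outputs the light-ray transform of a function that is only in $L^p$, which requires an additional distributional interpretation; the paper's single-parameter solutions with a fixed frequency $\xi$ satisfying $\xi\cdot(1,-\omega)=0$ yield $\mathcal Fq(\xi)=0$ on the hyperplanes directly and bypass this issue.
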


\begin{Thm}\label{thm2}
Let $p\in(n,+\infty)$ and let $q_1,\ q_2 \in L^{\infty}(0,T;L^p(\Omega))$.
Then, the condition
\begin{equation}\label{thm2a}C_{q_1}(0,T,V)=C_{q_2}(0,T,V)\end{equation}
implies that  $q_1=q_2$.
\end{Thm}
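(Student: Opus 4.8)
The plan is to reduce the equality \eqref{thm2a} of partial data to the vanishing of a partial space--time Fourier transform of $q:=q_1-q_2$ (extended by $0$ outside $Q$), by testing two families of geometric optics (GO) solutions against each other and using a Carleman estimate with a linear weight to absorb the unobserved part of the lateral boundary. First I would exploit \eqref{thm2a} as follows: given a solution $u_2$ of $\Box u_2+q_2u_2=0$ with $u_2|_{t=0}=0$, the data equality produces a solution $u_1$ of $\Box u_1+q_1u_1=0$ with $u_1|_{t=0}=0$ sharing the traces $u_1|_\Sigma=u_2|_\Sigma$, $\partial_tu_1|_{t=0}=\partial_tu_2|_{t=0}$, $\partial_\nu u_1|_V=\partial_\nu u_2|_V$ and $u_1|_{t=T}=u_2|_{t=T}$. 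Setting $w:=u_1-u_2$, we have $\Box w+q_1w=-q\,u_2$ with vanishing Cauchy data at $t=0$, with $w|_\Sigma=0$, $w|_{t=T}=0$ and $\partial_\nu w|_V=0$. Multiplying by a solution $v$ of the formally self-adjoint equation $\Box v+q_1v=0$ satisfying $v|_{t=T}=0$ and integrating over $Q$, Green's formula cancels every boundary contribution except the one carried by the unobserved face, giving
\[\int_Q q\,u_2\,v\,\d x\,\d t=\int_{\Sigma\setminus V}\partial_\nu w\,v\,\d\sigma\,\d t,\]
where $\Sigma\setminus V\subset\Sigma_{+,\omega_0}$ lies in the $\omega_0$-shadowed face.

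For $\omega$ in a small neighborhood of $\omega_0$ in $\mathbb S^{n-1}$ and $\rho>0$ large I would construct the two GO families $u_2=e^{\rho(t+x\cdot\omega)}(a_2+r_2)$, solving $\Box u_2+q_2u_2=0$ with $u_2|_{t=0}=0$, and $v=e^{-\rho(t+x\cdot\omega)+i(\theta t+x\cdot\kappa)}(a_1+r_1)$, solving $\Box v+q_1v=0$ with $v|_{t=T}=0$; since $\abs{\omega}=1$ the phase $t+x\cdot\omega$ is characteristic for $\Box$, and imposing the null relation $\theta=\omega\cdot\kappa$ removes the leading eikonal term. The amplitudes $a_1,a_2$ are chosen to solve the first transport equation (hence are smooth and bounded, and can be arranged so that the temporal constraints at $t=0$ and $t=T$ hold), and the key point is to solve for the remainders $r_1,r_2$ with $\norm{r_j}_{L^2(Q)}\to0$ as $\rho\to+\infty$ together with a bound on $\norm{\nabla r_j}_{L^2(Q)}$. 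Here the Carleman estimate for $\Box$ with the weight $-(t+x\cdot\omega)$ plays a double role: it furnishes the solvability and the decay of the conjugated equation for $r_j$, and its boundary term controls $\int_{\Sigma\setminus V}(\nu\cdot\omega)\abs{\partial_\nu w}^2e^{-2\rho(t+x\cdot\omega)}$ by the source $\norm{q(a_2+r_2)}_{L^2(Q)}^2$, the illuminated-face contribution being eliminated because $\partial\Omega_{-,\omega}\subset V'$ for $\omega$ near $\omega_0$ and $\partial_\nu w|_V=0$. Since $\nu\cdot\omega\geq c>0$ on $\Sigma\setminus V$ and $\int_{\Sigma\setminus V}\abs{v}^2e^{2\rho(t+x\cdot\omega)}\,\d\sigma\,\d t$ stays bounded, Cauchy--Schwarz then shows the right-hand side of the identity is $O(\rho^{-1/2})$ and tends to $0$.

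Inserting the GO solutions, the two real exponentials cancel, the remainders drop out (the cross terms being controlled by $q\in L^\infty(0,T;L^p(\Omega))$ and $\norm{r_j}_{L^2(Q)}\to0$), and letting $\rho\to+\infty$ gives
\[\int_Q q(t,x)\,a_1a_2\,e^{i(\theta t+x\cdot\kappa)}\,\d x\,\d t=0,\qquad \theta=\omega\cdot\kappa,\]
for all $\kappa\in\R^n$ and all $\omega$ near $\omega_0$. Using the freedom in the transported amplitudes $a_1,a_2$ and in $\kappa$, this is the vanishing of the light-ray transform of $q$ for directions close to $(1,\omega_0)$, equivalently $\widehat q(\theta,\kappa)=0$ for $\theta=\omega\cdot\kappa$. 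Finally, fixing $\kappa\neq0$ and varying $\omega$ in the neighborhood of $\omega_0$ makes $\theta=\omega\cdot\kappa$ describe an open interval; since $q$ is compactly supported in $t$ (and in $x$), the map $\theta\mapsto\widehat q(\theta,\kappa)$ extends to an entire function, so its vanishing on an interval forces $\widehat q(\cdot,\kappa)\equiv0$ for every $\kappa$, whence $\widehat q\equiv0$ and $q_1=q_2$.

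I expect the main obstacle to be the construction of the remainders $r_1,r_2$ in the singular setting together with their decay as $\rho\to+\infty$: the conjugated wave operator must be inverted against a source and a perturbation both involving multiplication by $q_j\in L^\infty(0,T;L^p(\Omega))$, and $\norm{q_jr_j}_{L^2(Q)}$ must be absorbed uniformly in $\rho$. This is exactly where $p>n$ enters: by the Sobolev embedding $H^1(\Omega)\hookrightarrow L^{2n/(n-2)}(\Omega)$ and H\"older's inequality, multiplication by an $L^p(\Omega)$ function with $p>n$ sends $H^1(\Omega)$ into $L^2(\Omega)$ with a gain, so this term is a relatively small (indeed relatively compact) perturbation and the remainder equation can be solved by a Carleman a priori estimate combined with a fixed-point argument. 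The anisotropic structure $L^\infty(0,T;L^p(\Omega))$, which lets the time variable act as a bounded multiplier, is precisely what lowers the admissible exponent to $p>n$ compared with the isotropic requirement $p>n+1$ of Theorem~\ref{thm1}; reconciling this with the admissibility constraint $u_2|_{t=0}=0$ and with the well-posedness and trace theory needed to make $C_{q}(0,T,V)$ meaningful for such singular $q$ (the object of Section~2) is the remaining technical heart of the proof.
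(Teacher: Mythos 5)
Your overall architecture matches the paper's: exponentially weighted GO solutions with a characteristic linear phase $t+x\cdot\omega$, a Carleman estimate with the weight $e^{-\lambda(t+\omega\cdot x)}$ whose boundary terms absorb the unobserved portion $\Sigma\setminus V\subset\Sigma_{+,\epsilon,\omega}$ (using $\partial\Omega_{-,\epsilon,\omega}\subset V'$ for $\omega$ near $\omega_0$), the resulting vanishing of $\mathcal Fq$ on the hyperplanes $\xi\cdot(1,-\omega)=0$, and the analyticity/Paley--Wiener conclusion. The anisotropy remark explaining why $L^\infty(0,T;L^p(\Omega))$ with $p>n$ suffices is also the right mechanism (it is exactly the second case of Lemma \ref{l2}). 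But there is a genuine gap at the single point that constitutes the technical heart of Theorem \ref{thm2}: you assert that the amplitudes $a_1,a_2$, which solve the first transport equation $\partial_ta\pm\omega\cdot\nabla_xa=0$, ``can be arranged so that the temporal constraints at $t=0$ and $t=T$ hold.'' This is impossible: a solution of the transport equation is constant along the characteristic lines $(t,x_0+t\omega)$, which sweep out all of $Q$ from the slice $\{t=0\}$ (resp. $\{t=T\}$), so an amplitude vanishing on a whole time slice vanishes identically. Forcing the vanishing through the remainder instead would require $r_2(0,\cdot)=-a_2(0,\cdot)$, an $O(1)$ trace incompatible with the decay $\norm{r_2}_{L^2(Q)}\to0$ that your argument needs. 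The paper resolves this with a two-exponential ansatz: $u_2=e^{\lambda(t+x\cdot\omega)}(1+w_2)-e^{\lambda(-t+x\cdot\omega)}$ (formula \eqref{GOO1}), where the second phase $-t+x\cdot\omega$ is also characteristic and cancels the first at $t=0$, leaving the constraint $w_2|_{t=0}=0$ on the remainder alone. Enforcing \emph{that} is nontrivial and is the reason for Theorem \ref{t1}, a new Carleman estimate in $L^2(0,T;H^{-1}_\lambda(\R^n))$ (negative order in the space variable only): in the Hahn--Banach duality construction of Proposition \ref{p6}, the test functions are taken with $v(0,\cdot)=v(T,\cdot)=\partial_tv(T,\cdot)=0$ but $\partial_tv|_{t=0}$ arbitrary, and it is precisely this freedom that yields $\tau_{0,2}w_2=0$ together with $\norm{w_2}_{L^2(Q)}=O(\lambda^{-1})$. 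Your proposed fixed-point construction of the remainders does not address this trace constraint at all, and an energy-based forward solve with zero initial data would not produce the $\lambda^{-1}$ decay for the conjugated operator.

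A secondary, fixable divergence: you additionally impose $v|_{t=T}=0$ on the dual ($q_1$-) solution to make Green's formula cancel the $t=T$ contribution. This doubles the difficulty just described (same transport obstruction at $t=T$), and the paper avoids it entirely: it keeps $u_1$ of the form \eqref{GO1} with no terminal condition and instead estimates the surviving term $\int_\Omega\partial_tu(T,x)u_1(T,x)\,dx$ by the weighted boundary terms $\lambda\int_\Omega e^{-2\lambda(T+\omega\cdot x)}\abs{\partial_tu(T,x)}^2dx$ of the Carleman estimate of Theorem \ref{c1}, which also controls $\Sigma\setminus V$. If you insist on $v|_{t=T}=0$, you would need a reflection term with the characteristic phase $(2T-t)+x\cdot\omega$ (as in Section 3's oscillating solutions) plus, again, a mechanism for prescribing the remainder's trace at $t=T$ --- so either way the missing ingredient is the pair (\eqref{GOO1}, Theorem \ref{t1}), without which the proof does not close.
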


To our best knowledge the result of Theorem \ref{thm3}, \ref{thm1} and \ref{thm2} are the first results  claiming unique determination of unbounded time-dependent coefficients for the wave equation. In Theorem \ref{thm3}, we prove recovery of coefficients $q$, that can admit some singularities, by making restriction on the set of full data $C_q$ on the bottom $t=0$ and the top $t=T$ of $Q$. While, in Theorem  \ref{thm1} and \ref{thm2}, we consider less singular  time-dependent  coefficients, in order to restrict the data on the lateral boundary $\Sigma=(0,T)\times\pd\Omega$.

We mention also that the uniqueness result of Thorem \ref{thm2} is stated with data close to the one considered by \cite{Ki3,Ki2}, where determination  of bounded time-dependent potentials is proved with conditions that seems to be one of the weakest so far. More precisely,  the only difference between \cite{Ki3,Ki2} and Theorem \ref{thm2} comes from the restriction on the Dirichlet boundary condition  (\cite{Ki3,Ki2} consider Dirichlet boundary condition supported on a neighborhood of the $\omega_0$-shadowed face, while in Theorem \ref{thm2}  we do not restrict the support of the Dirichlet boundary).

In the present paper we consider two different approaches which depend mainly on the restriction that we make on the set of full data $C_q$. For Theorem \ref{thm3},
we use geometric optics solutions corresponding to  oscillating solutions of the form
\bel{osci}u(t,x)=\sum_{j=1}^Na_j(t,x)e^{i\lambda \psi_j(t,x)}+R_\lambda(t,x),\quad (t,x)\in Q,\ee
with $\lambda>1 $ a large parameter, $R_\lambda$ a remainder term that admits a decay with respect to the parameter $\lambda$ and $\psi_j$, $j=1,..,N$,  real valued. For $N=1$, these solutions correspond to a classical tool for proving determination of time independent or time-dependent coefficients (e. g. \cite{BB,BCY,BD,A,RR,RS,RS1}). In a similar way to \cite{KiOk}, we consider in Theorem \ref{thm3}
solutions of the form \eqref{osci} with $N=2$ in order to be able to restrict the data at $t=0$ and $t=T$ while  avoiding a "reflection". It seems that in the approach set so far for the construction of  solutions of the form \eqref{osci}, the decay of the remainder term $R_\lambda$ relies in an important way to the fact that the coefficient $q$ is bounded (or time independent). In this paper, we prove how this construction can be extended to unbounded time-dependent coefficients.

The approach used for Theorem \ref{thm3} allows in a quite straightforward way to restrict the data on the bottom $t=0$ and on the top $t=T$ of $Q$. Nevertheless, it is not clear how one can extend this approach to restriction on the lateral boundary $\Sigma$ without requiring additional smoothness or geometrical assumptions. For this reason, in order to consider restriction on $\Sigma$, we use a different approach where the oscillating solutions \eqref{osci} are replaced by exponentially growing and exponentially decaying solutions of the form
\bel{exp}u(t,x)=e^{\pm \lambda (t+x\cdot\omega)}(a(t,x)+w_\lambda(t,x)),\quad (t,x)\in Q,\ee
where $\omega\in\mathbb S^{n-1}$ and $w_\lambda$ admits a decay with respect to the parameter $\lambda$. The idea of this approach, which is inspired by \cite{BJY,Ki3,Ki2,Ki4} (see also \cite{BU,KSU} for elliptic equations), consists of combining results of density of products of solutions with  Carleman estimates with linear weight in order to be able to restrict at the same time the data  on the bottom $t=0$, on the top $t=T$ and on the lateral boundary $\Sigma$ of $Q$. For the construction of these solutions, we use Carleman estimates in negative order Sobolev space. To our best knowledge this is the first extension of this approach to singular time-dependent coefficients.

\subsection{Outline}

This paper is organized as follows. In Section 2, we start with some preliminary results and we define the set of data $C_q(0)$, $C_q(T)$, $C_q(0,T)$, $C_q(T,V)$ and $C_q(0,T,V)$. In Section 3, we prove Theorem \ref{thm3} by mean of  geometric optics solutions of the form \eqref{osci}. Then, Section 4 and Section 5 are respectively devoted to the proof of Theorem \ref{thm1} and Theorem \ref{thm2}.

\section{Preliminary results}
In the present section we define the set of data $C_{q}(T,V)$, $C_{q}(0,T,V)$ and we recall some properties of the solutions of \eqref{wave} for any $q\in L^{p_1}(Q)$, with $p_1>n+1$, or, for $q\in L^\infty (0,T;L^{p_2}(\Omega))$, with $p_2>n$.  For this purpose, in a similar way to  \cite{Ki2}, we will introduce some preliminary tools. We define the  space
\[H_{\Box}(Q)=\{u\in H^1(Q):\ \Box u=(\partial_t^2-\Delta_x) u\in L^2(Q)\},\]\[ H_{\Box,*}(Q)=\{u\in L^2(0,T;H^1(\Omega)):\ \Box u=(\partial_t^2-\Delta_x) u\in L^2(Q)\},\]
 with the norm
\[\norm{u}^2_{H_{\Box}(Q)}=\norm{u}_{H^1(Q)}^2+\norm{(\partial_t^2-\Delta_x) u}_{L^2(Q)}^2,\]
\[\norm{u}^2_{H_{\Box,*}(Q)}=\norm{u}_{L^2(0,T;H^1(\Omega))}^2+\norm{(\partial_t^2-\Delta_x) u}_{L^2(Q)}^2.\]
We consider also  the  space
\[S=\{u\in H^1(Q):\ (\partial_t^2-\Delta_x)u=0\} \ (\textrm{resp } S_*=\{u\in L^2(0,T;H^1(\Omega)):\ (\partial_t^2-\Delta_x)u=0\})\]
and topologize it as a closed subset of $H^1(Q)$ (resp $L^2(0,T;H^1(\Omega))$).  In view of  \cite[Proposition 4]{Ki2},  the maps
\[\tau_0w=(w_{\vert\Sigma},w_{\vert t=0},\partial_t w_{\vert t=0}) ,\quad \tau_1w=(\partial_\nu w_{\vert\Sigma},w_{\vert t=T},\partial_t w_{\vert t=T}), \quad w\in \mathcal C^\infty(\overline{Q}),\]
can be extended continuously to $\tau_0:H_{\Box,*}(Q)\rightarrow H^{-3}(0,T; H^{-\frac{1}{2}}(\partial\Omega))\times H^{-2}(\Omega)\times H^{-4}(\Omega)$, $\tau_1:H_{\Box,*}(Q)\rightarrow H^{-3}(0,T; H^{-\frac{3}{2}}(\partial\Omega))\times H^{-2}(\Omega)\times H^{-4}(\Omega)$. Here for all $ w\in \mathcal C^\infty(\overline{Q})$ we set
\[\tau_0w=(\tau_{0,1}w,\tau_{0,2}w,\tau_{0,3}w),\quad \tau_1w=(\tau_{1,1}w,\tau_{1,2}w,\tau_{1,3}w),\]
where
\[ \tau_{0,1}w=w_{\vert\Sigma},\    \tau_{0,2}w=w_{\vert t=0},\ \tau_{0,3}w=\partial_tw_{\vert t=0},\  \tau_{1,1}w=\partial_\nu w_{\vert\Sigma},\ \tau_{1,2}w=w_{\vert t=T},\ \tau_{1,3}w=\partial_tw_{\vert t=T}.\]
Therefore, we can introduce
\[\mathcal H=\{\tau_0u:\ u\in H_{\Box}(Q)\}\subset H^{-3}(0,T; H^{-\frac{1}{2}}(\partial\Omega))\times H^{-2}(\Omega)\times H^{-4}(\Omega),\]
$$\mathcal H_*=\{(\tau_{0,1}u,\tau_{0,3}u):\ u\in H_{\Box,*}(Q),\ \tau_{0,2}u=0\}\subset  H^{-3}(0,T; H^{-\frac{1}{2}}(\partial\Omega))\times H^{-4}(\Omega).$$
By repeating the arguments used in \cite[Proposition 1]{Ki2}, one can check that  the restriction of $\tau_0$ to $S$ (resp $S_*$) is one to one and onto. Thus, we can   use  $({\tau_0}_{|S})^{-1}$ (resp $({\tau_0}_{|S_*})^{-1}$) to define the norm of $\mathcal H$ (resp $\mathcal H_*$) by
\[\norm{(f,v_0,v_1)}_{\mathcal H}=\norm{({\tau_0}_{|S})^{-1}(f,v_0,v_1)}_{H^1(Q)},\quad (f,v_0,v_1)\in\mathcal H,\]
$$(\textrm{resp} \norm{(f,v_1)}_{\mathcal H_*}=\norm{({\tau_0}_{|S_*})^{-1}(f,0,v_1)}_{L^2(0,T;H^1(\Omega))},\quad (f,v_1)\in\mathcal H_*).$$
 Let us consider the initial boundary value problem (IBVP in short)
\bel{eq2}\left\{ \begin{array}{rcll} \partial_t^2v-\Delta_x v+qv =  F(t,x),&  (t,x) \in Q ,\\

v(0,x)=v_0(x),\ \ \partial_t v(0,x)=v_1(x),& x\in\Omega\\
 v(t,x)=0,  & (t,x)\in\Sigma.& \end{array}\right.
\ee

We have the following well-posedness result for this IBVP when $q$ is unbounded.
\begin{prop}\label{p1} Let $p_1\in(1,+\infty)$ and $p_2\in(n,+\infty)$. For $q\in L^{p_1}(0,T;L^{p_2}(\Omega))$, $v_0\in H^1_0(\Omega)$, $v_1\in L^2(\Omega)$ and $F\in L^2(Q)$, problem \eqref{eq2} admits a unique solution $v\in \mathcal C([0,T];H^1_0(\Omega))\cap \mathcal C^1([0,T];L^2(\Omega))$ satisfying
\bel{p1a} \norm{v}_{\mathcal C([0,T];H^1_0(\Omega))}+\norm{v}_{\mathcal C^1([0,T];L^2(\Omega))}\leq C (\norm{v_0}_{H^1(\Omega)}+\norm{v_1}_{L^2(\Omega)}+\norm{F}_{L^2(Q)}),\ee
with $C$ depending only on $p_1$, $p_2$, $n$, $T$, $\Omega$ and any $M\geq \norm{q}_{L^{p_1}(0,T;L^{p_2}(\Omega))}$.
\end{prop}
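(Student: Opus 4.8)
The plan is to regard the potential term $qv$ as a forcing term and to solve \eqref{eq2} by a fixed point argument built on the well-posedness of the \emph{free} wave equation. I would first record the classical energy estimate: for $v_0\in H^1_0(\Omega)$, $v_1\in L^2(\Omega)$ and a source $G\in L^1(0,T;L^2(\Omega))$, the problem $\Box v=G$ with $v_{|t=0}=v_0$, $\partial_t v_{|t=0}=v_1$, $v_{|\Sigma}=0$ has a unique solution in $\mathcal{C}([0,T];H^1_0(\Omega))\cap\mathcal{C}^1([0,T];L^2(\Omega))$ satisfying
\[\norm{v}_{\mathcal{C}([0,T];H^1_0(\Omega))}+\norm{v}_{\mathcal{C}^1([0,T];L^2(\Omega))}\leq C_0\left(\norm{v_0}_{H^1(\Omega)}+\norm{v_1}_{L^2(\Omega)}+\norm{G}_{L^1(0,T;L^2(\Omega))}\right),\]
with $C_0=C_0(n,T,\Omega)$. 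This comes from the standard multiplier identity $\frac{\d}{\d t}\mathcal{E}(t)=\int_\Omega \partial_t v\, G\,\d x$ for the energy $\mathcal{E}(t)=\tfrac12(\norm{\nabla v(t)}_{L^2(\Omega)}^2+\norm{\partial_t v(t)}_{L^2(\Omega)}^2)$, combined with the Poincaré inequality (so that $\mathcal{E}^{1/2}(t)$ is comparable to the left-hand side). The entire difficulty is then reduced to controlling $\norm{qv}_{L^1(0,T;L^2(\Omega))}$ by the energy of $v$, the obstruction being that $q$ is \emph{unbounded}.

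The key estimate that makes this possible rests on the hypothesis $p_2>n$. For a.e.\ $t$, Hölder's inequality gives $\norm{q(t)v(t)}_{L^2(\Omega)}\leq\norm{q(t)}_{L^{p_2}(\Omega)}\norm{v(t)}_{L^s(\Omega)}$ with $\frac1s=\frac12-\frac1{p_2}$; since $p_2>n$ one has $s<\frac{2n}{n-2}$ for $n\geq3$ and $s<\infty$ for $n=2$, so the Sobolev embedding $H^1(\Omega)\hookrightarrow L^s(\Omega)$ on the bounded domain $\Omega$ yields
\[\norm{q(t)v(t)}_{L^2(\Omega)}\leq C_1\norm{q(t)}_{L^{p_2}(\Omega)}\norm{v(t)}_{H^1(\Omega)},\qquad C_1=C_1(p_2,n,\Omega).\]
Integrating in time and applying Hölder in $t$ with the conjugate exponent $p_1'=p_1/(p_1-1)$, for any subinterval $I\subset(0,T)$ one gets
\[\int_I\norm{q(t)v(t)}_{L^2(\Omega)}\,\d t\leq C_1\abs{I}^{1/p_1'}\norm{q}_{L^{p_1}(0,T;L^{p_2}(\Omega))}\sup_{t\in I}\norm{v(t)}_{H^1(\Omega)}.\]
Because $p_1>1$, the factor $\abs{I}^{1/p_1'}$ tends to $0$ as $\abs{I}\to0$, uniformly in $q$ once a bound $M\geq\norm{q}_{L^{p_1}(0,T;L^{p_2}(\Omega))}$ is fixed.

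I would then choose $\delta>0$ with $C_0C_1\delta^{1/p_1'}M\leq\tfrac12$, and on each window $[a,a+\delta]\cap[0,T]$ define the map $\Phi$ sending $v$ to the free solution with source $F-qv$ and the prescribed Cauchy data at $t=a$. The two displays above show that $\Phi$ is a $\tfrac12$-contraction on $\mathcal{C}([a,a+\delta];H^1_0(\Omega))\cap\mathcal{C}^1([a,a+\delta];L^2(\Omega))$, so Banach's theorem gives a unique local solution; gluing the $N=\lceil T/\delta\rceil$ successive pieces (matching Cauchy data at the endpoints) produces the global solution. For the quantitative bound \eqref{p1a} and for global uniqueness, I would argue directly by Grönwall: the solution obeys $\mathcal{E}^{1/2}(t)\leq C_0(\norm{v_0}_{H^1}+\norm{v_1}_{L^2}+\norm{F}_{L^1(0,T;L^2)})+C_0C_1\int_0^t\norm{q(s)}_{L^{p_2}}\,\mathcal{E}^{1/2}(s)\,\d s$, whence \eqref{p1a} with constant $C_0\exp(C_0C_1T^{1/p_1'}M)$, depending only on $p_1,p_2,n,T,\Omega,M$; uniqueness follows since the difference $w$ of two solutions satisfies the same inequality with zero data, forcing $w\equiv0$. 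The main obstacle is the balance in the second paragraph: the condition $p_2>n$ is exactly what guarantees $qv\in L^2(\Omega)$ through Sobolev embedding, while $p_1>1$ is what yields the gain $\abs{I}^{1/p_1'}$ that closes the contraction and keeps the number of iteration steps (hence the final constant) dependent only on $M$.
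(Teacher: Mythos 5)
Your proof is correct, and it reaches the conclusion by a genuinely different architecture than the paper, even though both arguments pivot on the identical key estimate $\norm{q(t)v(t)}_{L^2(\Omega)}\leq C\norm{q(t)}_{L^{p_2}(\Omega)}\norm{v(t)}_{H^1(\Omega)}$ obtained from H\"older, the Sobolev embedding (valid exactly because $p_2>n$), and Poincar\'e. The paper does not construct the solution directly: it invokes the Lions--Magenes machinery (Theorems 8.1 and 8.3, Chapter 3 of \cite{LM1}) to reduce everything to an a priori estimate for $W^{2,\infty}(0,T;H^1_0(\Omega))$ solutions, and then closes that estimate with a nonlinear Gr\"onwall trick: after the time-H\"older step it is left with $E(t)\leq A+C\bigl(\int_0^t E(s)^{p_1/(p_1-1)}ds\bigr)^{(p_1-1)/p_1}$, which it handles by raising to the power $p_1/(p_1-1)$ and applying Gr\"onwall to $E^{p_1/(p_1-1)}$. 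You instead build the solution self-containedly (modulo classical free-wave well-posedness with $L^1(0,T;L^2(\Omega))$ source) by a Banach fixed point on time windows of uniform length $\delta$, the uniformity coming from the gain $\abs{I}^{1/p_1'}$ with $p_1>1$, and you close the quantitative bound with a plain \emph{linear} Gr\"onwall inequality, exploiting only that $t\mapsto\norm{q(t)}_{L^{p_2}(\Omega)}$ lies in $L^1(0,T)$; this is cleaner than the paper's power trick and makes transparent why the constant depends only on $p_1,p_2,n,T,\Omega,M$. What the paper's route buys is that it never needs the Duhamel/contraction setup and delegates the functional-analytic existence to \cite{LM1}; what yours buys is an elementary, self-contained construction and a simpler Gr\"onwall step (indeed your linear Gr\"onwall would even tolerate $p_1=1$, at the cost of losing the uniform window length in the contraction). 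One small point to make airtight: your final Gr\"onwall is applied to $\mathcal E^{1/2}$, which implicitly divides by $\mathcal E^{1/2}$ in the differential inequality; the standard regularization (working with $(\mathcal E+\epsilon)^{1/2}$ and letting $\epsilon\to0$), or applying Gr\"onwall to $\mathcal E$ itself after Young's inequality as the paper effectively does, removes this cosmetic issue.
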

\begin{proof}

According to the second part of the proof of \cite[Theorem 8.1, Chapter 3]{LM1}, \cite[Remark 8.2, Chapter 3]{LM1} and \cite[Theorem 8.3, Chapter 3]{LM1}, the proof of this proposition will be completed if we show that for any $v\in W^{2,\infty}(0,T;H^1_0(\Omega))$ solving \eqref{eq2} the a priori estimate \eqref{p1a} holds true. Without lost of generality we assume that $v$ is real valued. From now on we consider this estimate. We define the enery $E(t)$ at time $t\in[0,T]$ by
$$E(t):=\int_\Omega \left( |\partial_t v(t,x)|^2+|\nabla_x v(t,x)|^2\right)dx.$$
Multiplying \eqref{eq2} by $\partial_tv$ and integrating by parts we get
\bel{p1b} E(t)-E(0)=-2\int_0^t\int_\Omega q(s,x)v(s,x)\partial_t v(s,x)dxds+2\int_0^t\int_\Omega F(s,x)\partial_t v(s,x)dxds.\ee
On the other hand, we have
\bel{p1c}\abs{\int_0^t\int_\Omega q(s,x)v(s,x)\partial_t v(s,x)dxds}\leq \int_0^t\norm{qv(s,\cdot)}_{L^2(\Omega)}\norm{\partial_tv(s,\cdot)}_{L^2(\Omega)}ds.\ee
Applying the Sobolev embedding theorem and the H\"older inequality, for all $s\in(0,T)$ we get
$$\begin{aligned}\norm{qv(s,\cdot)}_{L^2(\Omega)}&\leq \norm{q(s,\cdot)}_{L^{p_2}(\Omega)}\norm{v(s,\cdot)}_{L^{\frac{2p_2}{p_2-2}}(\Omega)}\\
\ &\leq C\norm{q(s,\cdot)}_{L^{p_2}(\Omega)}\norm{v(s,\cdot)}_{H^{\frac{n}{p_2}}(\Omega)}\\
\ &\leq C\norm{q(s,\cdot)}_{L^{p_2}(\Omega)}\norm{v(s,\cdot)}_{H^1(\Omega)}\end{aligned}$$
with $C$ depending only on $\Omega$. Then, the Poincarr\'e inequality implies
$$\norm{qv(s,\cdot)}_{L^2(\Omega)}\leq C\norm{q(s,\cdot)}_{L^{p_2}(\Omega)}\norm{\nabla_xv(s,\cdot)}_{L^2(\Omega)}\leq C\norm{q(s,\cdot)}_{L^{p_2}(\Omega)} E(s)^{\frac{1}{2}},$$
where  $C$ depends only on $\Omega$. Thus, from \eqref{p1c}, we get
\bel{p1d}\abs{\int_0^t\int_\Omega q(s,x)v(s,x)\partial_t v(s,x)dxds}\leq C\int_0^t\norm{q(s,\cdot)}_{L^{p_2}(\Omega)}E(s)ds\leq \norm{q}_{L^{p_1}(0,T;L^{p_2}(\Omega))}\left(\int_0^tE(s)^{\frac{p_1}{p_1-1}}ds\right)^{\frac{p_1-1}{p_1}}.\ee
In the same way, an application of  the H\"older inequality yields
$$\begin{aligned}\abs{\int_0^t\int_\Omega F(s,x)\partial_t v(s,x)dxds}&\leq T^{\frac{1}{2p_1}}\norm{F}_{L^2(Q)}\left(\int_0^tE(s)^{\frac{p_1}{p_1-1}}ds\right)^{\frac{p_1-1}{2p_1}}\\
\ &\leq T^{\frac{1}{p_1}}\norm{F}_{L^2(Q)}^2+\left(\int_0^tE(s)^{\frac{p_1}{p_1-1}}\right)^{\frac{p_1-1}{p_1}}.\end{aligned}$$
Combining this estimate with \eqref{p1b}-\eqref{p1d}, we deduce that
$$E(t)\leq E(0)+ C\norm{F}_{L^2(Q)}^2+ C\left(\int_0^tE(s)^{\frac{p_1}{p_1-1}}\right)^{\frac{p_1-1}{p_1}},$$
where $C$ depends only on $T$, $\Omega$ and any $M\geq \norm{q}_{L^{p_1}(0,T;L^{p_2}(\Omega))}$. By taking the power $\frac{p_1}{p_1-1}$ on both side of this inequality, we get
$$E(t)^{\frac{p_1}{p_1-1}}\leq C\left(\norm{v_0}_{H^1(\Omega)}+\norm{v_1}_{L^2(\Omega)}+\norm{F}_{L^2(Q)}\right)^{\frac{2p_1}{p_1-1}}+C\int_0^tE(s)^{\frac{p_1}{p_1-1}}ds.$$
Then, the Gronwall inequality implies
$$\begin{aligned}E(t)^{\frac{p_1}{p_1-1}}&\leq C\left(\norm{v_0}_{H^1(\Omega)}+\norm{v_1}_{L^2(\Omega)}+\norm{F}_{L^2(Q)}\right)^{\frac{2p_1}{p_1-1}}e^{Ct}\\
\ &\leq C\left(\norm{v_0}_{H^1(\Omega)}+\norm{v_1}_{L^2(\Omega)}+\norm{F}_{L^2(Q)}\right)^{\frac{2p_1}{p_1-1}}e^{CT}.\end{aligned}$$
From this last estimate one can easily deduce \eqref{p1a}.

\end{proof}

Let us introduce  the IBVP
\begin{equation}\label{eq1}\left\{\begin{array}{ll}\partial_t^2u-\Delta_x u+q(t,x)u=0,\quad &\textrm{in}\ Q,\\  u(0,\cdot)=v_0,\quad \partial_tu(0,\cdot)=v_1,\quad &\textrm{in}\ \Omega,\\ u=g,\quad &\textrm{on}\ \Sigma.\end{array}\right.\end{equation}
We are now in position to state existence and uniqueness of solutions of this IBVP for $(g,v_0,v_1)\in \mathcal H$ and $q\in L^p(Q)$, $p>n+1$.
\begin{prop}\label{p2} Let $(g,v_0,v_1)\in \mathcal H$,  $q\in L^p(Q)$, $p>n+1$. Then, the IBVP \eqref{eq1} admits a unique weak solution $u\in H^1(Q)$ satisfying
\bel{p2a}
\norm{u}_{H^1(Q)}\leq C\norm{(g,v_0,v_1)}_{\mathcal H}
\ee
and the boundary operator $B_{q}: (g,v_0,v_1)\mapsto (\tau_{1,1}u_{|V}, \tau_{1,2}u)$ is a bounded operator from $\mathcal H$ to\\
 $H^{-3}(0,T; H^{-\frac{3}{2}}(V'))\times H^{-2}(\Omega)$.
\end{prop}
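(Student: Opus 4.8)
The plan is to reduce the inhomogeneous IBVP \eqref{eq1} to the homogeneous-data problem \eqref{eq2} already solved in Proposition \ref{p1}, by lifting the Cauchy and lateral data with a free wave. First I would introduce $w:=({\tau_0}_{|S})^{-1}(g,v_0,v_1)\in S$, that is, the unique element of $H^1(Q)$ satisfying $\Box w=0$ and $\tau_0 w=(g,v_0,v_1)$; by the very definition of the norm on $\mathcal H$ one has $\norm{w}_{H^1(Q)}=\norm{(g,v_0,v_1)}_{\mathcal H}$. Seeking $u$ in the form $u=w+v$, the corrector $v$ must then solve \eqref{eq2} with $v_0=v_1=0$, $F=-qw$ and homogeneous lateral boundary condition.

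The main technical point is to check that $F=-qw\in L^2(Q)$, which is where the hypothesis $p>n+1$ enters. Since $Q$ is a bounded domain of $\R^{1+n}$ with $n\geq2$, the Sobolev embedding gives $H^1(Q)\hookrightarrow L^{\frac{2(n+1)}{n-1}}(Q)$, and H\"older's inequality with $q\in L^p(Q)$ yields
\[\norm{qw}_{L^s(Q)}\leq \norm{q}_{L^p(Q)}\norm{w}_{L^{\frac{2(n+1)}{n-1}}(Q)},\qquad \frac1s=\frac1p+\frac{n-1}{2(n+1)}.\]
The condition $p>n+1$ gives $\frac1s<\frac12$, hence $s>2$ and, $Q$ being bounded, $qw\in L^s(Q)\hookrightarrow L^2(Q)$ with $\norm{qw}_{L^2(Q)}\leq C\norm{q}_{L^p(Q)}\norm{w}_{H^1(Q)}$. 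I would then apply Proposition \ref{p1} with $p_1=p_2=p$ (admissible since $p>n+1>n$), $v_0=v_1=0$ and $F=-qw$, obtaining a unique $v\in\mathcal C([0,T];H^1_0(\Omega))\cap\mathcal C^1([0,T];L^2(\Omega))\subset H^1(Q)$ with $\norm{v}_{H^1(Q)}\leq C\norm{qw}_{L^2(Q)}\leq C\norm{q}_{L^p(Q)}\norm{w}_{H^1(Q)}$.

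Setting $u:=w+v\in H^1(Q)$ then furnishes the desired solution: $\Box u+qu=(\Box v+qv)+qw=0$, while $\tau_0 u=\tau_0 w+\tau_0 v=(g,v_0,v_1)$ because $\tau_0 v=0$. Estimate \eqref{p2a} follows from $\norm{u}_{H^1(Q)}\leq\norm{w}_{H^1(Q)}+\norm{v}_{H^1(Q)}\leq C\norm{w}_{H^1(Q)}=C\norm{(g,v_0,v_1)}_{\mathcal H}$. For uniqueness, the difference $z$ of two solutions lies in $H^1(Q)$, solves $\Box z+qz=0$ with $\tau_0 z=0$, and satisfies $z\in L^2(0,T;H^1_0(\Omega))$ together with $\Box z=-qz\in L^2(Q)$ (same H\"older--Sobolev bound); by the hyperbolic regularity underlying Proposition \ref{p1} such a $z$ belongs to the energy class $\mathcal C([0,T];H^1_0(\Omega))\cap\mathcal C^1([0,T];L^2(\Omega))$, so the uniqueness part of Proposition \ref{p1} (applied with $F=0$, $v_0=v_1=0$) forces $z=0$.

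It remains to prove the continuity of $B_q$. By construction $u\in H^1(Q)$ and $\Box u=-qu\in L^2(Q)$ with $\norm{qu}_{L^2(Q)}\leq C\norm{q}_{L^p(Q)}\norm{u}_{H^1(Q)}$, so $u\in H_{\Box}(Q)$ and $\norm{u}_{H_{\Box}(Q)}\leq C\norm{(g,v_0,v_1)}_{\mathcal H}$. Since, by \cite[Proposition 4]{Ki2}, the map $\tau_1$ is continuous from $H_{\Box}(Q)$ into $H^{-3}(0,T;H^{-\frac32}(\partial\Omega))\times H^{-2}(\Omega)\times H^{-4}(\Omega)$, composing $(g,v_0,v_1)\mapsto u$ with $\tau_1$, then restricting the first component to $V$ and projecting onto the pair $(\tau_{1,1}u_{|V},\tau_{1,2}u)$, shows that $B_q$ is bounded from $\mathcal H$ into $H^{-3}(0,T;H^{-\frac32}(V'))\times H^{-2}(\Omega)$. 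I expect the delicate step to be the $L^2$-bound on $qw$ (equivalently on $qu$), as it is precisely the borderline Sobolev--H\"older estimate that dictates the threshold $p>n+1$.
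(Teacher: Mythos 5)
Your proof is correct and takes essentially the same route as the paper: the same splitting $u=v+({\tau_0}_{|S})^{-1}(g,v_0,v_1)$ with the corrector $v$ handled by Proposition \ref{p1}, the same H\"older--Sobolev bound giving $qw\in L^2(Q)$ for $p>n+1$ (which the paper states in one line), and the same appeal to the continuity of $\tau_1$ from \cite[Proposition 4]{Ki2} for the boundedness of $B_q$. Your explicit exponent computation and the sign $F=-qw$ merely spell out what the paper leaves implicit (the right-hand side of \eqref{Eq1} carries a harmless sign typo there).
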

 \begin{proof} We split $u$ into two terms $u=v+\tau_0^{-1}(g,v_0,v_1)$ where $v$ solves
\bel{Eq1}\left\{ \begin{array}{rcll} \partial_t^2v-\Delta_x v+qv& = & q\tau_0^{-1}(g,v_0,v_1), & (t,x) \in Q ,\\

v_{\vert t=0}=\partial_t v_{\vert t=0}&=&0,&\\
 v_{\vert\Sigma}& = & 0.& \end{array}\right.
\ee
Since $q\in L^p(Q)$ and $\tau_0^{-1}(g,v_0,v_1)\in H^1(Q)$, by the Sobolev embedding theorem we have $q\tau_0^{-1}(g,v_0,v_1)\in L^2(Q)$. Thus, according to Proposition \ref{p1} one can check that the IBVP \eqref{Eq1} has a unique solution $v\in \mathcal C^1([0,T];L^2(\Omega))\cap \mathcal C([0,T];H^1_0(\Omega))$ satisfying
\bel{p2b}
\begin{aligned}\norm{v}_{\mathcal C^1([0,T];L^2(\Omega))}+\norm{v}_{\mathcal C([0,T];H^1_0(\Omega))} &\leq C\norm{q\tau_0^{-1}(g,v_0,v_1)}_{L^2(Q)}\\
\ &\leq C\norm{q}_{L^p(Q)}\norm{\tau_0^{-1}(g,v_0,v_1)}_{H^1(Q)}.\end{aligned}\ee
Thus, $u=v+\tau_0^{-1}(g,v_0,v_1)$ is the unique solution of \eqref{eq1} and estimate \eqref{p2b} implies \eqref{p2a}. Now let us consider the last part of the proposition. For this purpose, let $(g,v_0,v_1)\in\mathcal H$ and let $u\in H^1(Q)$ be the solution of \eqref{eq1}. Note first that  $(\partial_t^2-\Delta_x) u=-qu\in L^2(Q)$. Therefore,  $u\in H_{\Box}(Q)$ and  $\tau_{1,1}u\in H^{-3}(0,T; H^{-\frac{3}{2}}(\partial\Omega))$, $\tau_{1,2}u \in H^{-2}(\Omega)$ with
\[\begin{aligned}\norm{\tau_{1,1}u}^2+\norm{\tau_{1,2}u}^2\leq C^2\norm{u}^2_{H_{\Box}(Q)}&=C^2(\norm{u}^2_{H^1(Q))}+\norm{qu}^2_{L^2(Q)})\\
\ &\leq C^2(1+\norm{q}^2_{L^p(Q)})\norm{u}_{H^1(Q)}^2.\end{aligned}\]
Combining this with \eqref{p2a}, we find that $B_{q}$ is a bounded operator from $\mathcal H$ to $H^{-3}(0,T; H^{-\frac{3}{2}}(V'))\times H^{-2}(\Omega)$.\end{proof}

From now on, we define the set $C_{q}(T,V)$  by
\[C_{q}(T,V)=\{(g,v_0,v_1,B_{q}(g,v_0,v_1)):\ (g,v_0,v_1)\in \mathcal H\}.\]
In the same way, for $q\in L^{p_1}(0,T;L^{p_2}(\Omega))$, $p_1\geq2,p_2> n$, we consider the set $C_q(T)$, $C_{q}(0)$, $C_{q}(0,T)$ introduced before Theorem \ref{thm3}. Using similar arguments to Proposition \ref{p2} we can prove the following.

\begin{prop}\label{p3} Let $(g,v_1)\in \mathcal H_*$ with $v_0=0$ and let  $q\in L^\infty(0,T;L^p(\Omega))$, $p>n$. Then, the IBVP \eqref{eq1} admits a unique weak solution $u\in L^2(0,T;H^1(\Omega))$ satisfying
\bel{p3a}
\norm{u}_{L^2(0,T;H^1(\Omega))}\leq C\norm{(g,v_1)}_{\mathcal H_*}
\ee
and the boundary operator $B_{q,*}: (g,v_1)\mapsto (\tau_{1,1}u_{|V}, \tau_{1,2}u)$ is a bounded operator from $\mathcal H_*$ to\\
 $H^{-3}(0,T; H^{-\frac{3}{2}}(V'))\times H^{-2}(\Omega)$.
\end{prop}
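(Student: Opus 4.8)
The plan is to follow the proof of Proposition \ref{p2} almost verbatim, replacing the lift in $S$ by the lift in the starred solution space $S_*$ and adjusting the well-posedness input accordingly. First I would set $R=({\tau_0}_{|S_*})^{-1}(g,0,v_1)$, which by construction is the unique element of $S_*\subset L^2(0,T;H^1(\Omega))$ solving $\Box R=0$ with $\tau_0 R=(g,0,v_1)$, so that $\norm{R}_{L^2(0,T;H^1(\Omega))}=\norm{(g,v_1)}_{\mathcal H_*}$ by the very definition of the $\mathcal H_*$-norm. I would then seek $u$ in the form $u=v+R$, where $v$ solves the IBVP \eqref{eq2} with source $F=-qR$, homogeneous Dirichlet condition, and vanishing Cauchy data $v_0=v_1=0$. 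This choice is consistent: $\tau_{0,2}R=0$ and the hypothesis $v_0=0$ force $v_{|t=0}=0$, while $\partial_t R_{|t=0}=v_1=\partial_t u_{|t=0}$ forces $\partial_t v_{|t=0}=0$; and $\Box u+qu=(\Box v+qv)+\Box R+qR=F+qR=0$, with $u_{|\Sigma}=g$, so $u$ indeed solves \eqref{eq1}.

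The crucial step, and the point where the starred assumptions on $q$ enter, is to verify that $F=-qR\in L^2(Q)$. Here $R$ carries only $L^2$-integrability in time, so unlike in Proposition \ref{p2} one cannot exploit space-time integrability of the lift; instead I would use that $q$ is bounded in time. Concretely, for a.e. $t$, H\"older's inequality together with the Sobolev embedding $H^1(\Omega)\hookrightarrow L^{\frac{2p}{p-2}}(\Omega)$ (valid since $p>n$, exactly as in the proof of Proposition \ref{p1}) gives $\norm{q(t,\cdot)R(t,\cdot)}_{L^2(\Omega)}\leq C\norm{q(t,\cdot)}_{L^p(\Omega)}\norm{R(t,\cdot)}_{H^1(\Omega)}$; squaring and integrating in $t$ yields $\norm{qR}_{L^2(Q)}\leq C\norm{q}_{L^\infty(0,T;L^p(\Omega))}\norm{R}_{L^2(0,T;H^1(\Omega))}$. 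This is precisely why the hypothesis here is $q\in L^\infty(0,T;L^p(\Omega))$ with $p>n$ rather than $q\in L^p(Q)$ with $p>n+1$ as in Proposition \ref{p2}, and I expect this estimate to be the main technical point to get right.

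With $F=-qR\in L^2(Q)$ in hand, Proposition \ref{p1} (applicable since $L^\infty(0,T;L^p(\Omega))\subset L^{p_1}(0,T;L^p(\Omega))$ for any finite $p_1$ and $p>n$) furnishes a unique $v\in\mathcal C([0,T];H^1_0(\Omega))\cap\mathcal C^1([0,T];L^2(\Omega))$ satisfying $\norm{v}_{\mathcal C([0,T];H^1_0(\Omega))}\leq C\norm{F}_{L^2(Q)}\leq C\norm{(g,v_1)}_{\mathcal H_*}$. Then $u=v+R\in L^2(0,T;H^1(\Omega))$ solves \eqref{eq1}, and the triangle inequality gives \eqref{p3a}; uniqueness follows as in Proposition \ref{p2}, from the uniqueness of the lift $R$ and of the energy solution $v$.

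Finally, for the boundary operator I would note that $\Box u=-qu\in L^2(Q)$ by the same product estimate (now applied to $u\in L^2(0,T;H^1(\Omega))$), so that $u\in H_{\Box,*}(Q)$ and the continuity of $\tau_1$ on $H_{\Box,*}(Q)$ recalled from \cite[Proposition 4]{Ki2} shows $\tau_{1,1}u\in H^{-3}(0,T;H^{-\frac{3}{2}}(\partial\Omega))$ and $\tau_{1,2}u\in H^{-2}(\Omega)$ with $\norm{\tau_{1,1}u}^2+\norm{\tau_{1,2}u}^2\leq C\norm{u}_{H_{\Box,*}(Q)}^2\leq C(1+\norm{q}_{L^\infty(0,T;L^p(\Omega))}^2)\norm{u}_{L^2(0,T;H^1(\Omega))}^2$. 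Combining this with \eqref{p3a} and restricting the normal trace to $V'$ shows that $B_{q,*}$ is bounded from $\mathcal H_*$ into $H^{-3}(0,T;H^{-\frac{3}{2}}(V'))\times H^{-2}(\Omega)$, as claimed.
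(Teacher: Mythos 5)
Your proposal is correct and is essentially the paper's own argument: the paper proves Proposition \ref{p3} by declaring it follows ``using similar arguments to Proposition \ref{p2}'', and your proof is exactly that adaptation, with the lift taken in $S_*$ and Proposition \ref{p1} applied with source $F=-qR\in L^2(Q)$. You also correctly pinpoint the one place where the starred hypotheses matter --- the product bound $\norm{q(t,\cdot)R(t,\cdot)}_{L^2(\Omega)}\leq C\norm{q(t,\cdot)}_{L^p(\Omega)}\norm{R(t,\cdot)}_{H^1(\Omega)}$, needed because $R$ has only $L^2$ regularity in time --- which is precisely why the statement assumes $q\in L^\infty(0,T;L^p(\Omega))$ with $p>n$.
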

We define the set $C_{q}(0,T,V)$ by
\[C_{q}(0,T,V)=\{(g,v_1,B_{q,*}(g,v_1)):\ (g,v_1)\in \mathcal H_*\}.\]

\section{Proof of Theorem \ref{thm3}}

The goal of this section is to prove Theorem \ref{thm3}. For this purpose,  we consider special solutions $u_j$ of the equation \bel{wave1}\partial_t^2u_j-\Delta_xu_j+q_ju_j=0\ee
taking the form
\bel{os} u_j=a_{j,1}e^{i\lambda \psi_1(t,x)}+a_{j,2}e^{i\lambda \psi_2(t,x)}+R_{j,\lambda}\ee
with a large parameter $\lambda>0$ and a remainder term $R_{j,\lambda}$ that admits some decay with respect to $\lambda$. The use of such a solutions, also called oscillating geometric optics solutions, goes back to \cite{RS1} who have proved unique recovery of time-independent coefficients. Since then, such approach has been used by various authors in different context including recovery of a bounded time-dependent coefficient by \cite{KiOk}. In this section we will prove how one can extend this approach, that has been specifically designed for the recovery of time-independent coefficients or  bounded time-dependent coefficients,  to the recovery of singular time-dependent coefficients.

\subsection{Oscillating geometric optics solutions}

Fixing $\omega\in\mathbb S^{n-1}$, $\lambda>1$ and $a_{j,k}\in\mathcal C^\infty(\overline{Q})$, $j=1,2$, $k=1,2$, we consider solutions of \eqref{wave1} taking the form
\bel{go1}u_1(t,x)=a_{1,1}(t,x)e^{-i\lambda (t+x\cdot\omega)}+a_{1,2}(t,x)e^{-i\lambda ((2T-t)+x\cdot\omega)}+R_{1,\lambda}(t,x),\quad (t,x)\in Q,\ee
\bel{go2}u_2(t,x)=a_{2,1}(t,x)e^{i\lambda (t+x\cdot\omega)}+a_{2,2}(t,x)e^{i\lambda (-t+x\cdot\omega)}+R_{2,\lambda}(t,x),\quad (t,x)\in Q.\ee
Here, the expression $a_{j,k}$, $j,k=1,2$, are independent of $\lambda$ and they are respectively solutions of the transport equation
\bel{go3}\pd_ta_{j,k}+(-1)^{k}\omega\cdot\nabla_xa_{j,k}=0,\quad (t,x)\in Q,\ee
and the expression $R_{j,\lambda}$, $j=1,2$, solves respectively the IBVP
\bel{eqgo1}\left\{ \begin{array}{rcll} \partial_t^2R_{1,\lambda}-\Delta_x R_{1,\lambda}+q_1R_{1,\lambda}& =  F_{1,\lambda},&  (t,x) \in Q ,\\
R_{1,\lambda}(T,x)=0,\ \ \partial_t R_{1,\lambda}(T,x)&=0,& x\in\Omega\\
 R_{1,\lambda}(t,x)=0,& \  & (t,x) \in \Sigma,& \end{array}\right.
\ee
\bel{eqgo2}\left\{ \begin{array}{rcll} \partial_t^2R_{2,\lambda}-\Delta_x R_{2,\lambda}+q_2R_{2,\lambda}& =  F_{2,\lambda},&  (t,x) \in Q ,\\
R_{2,\lambda}(0,x)=0,\ \ \partial_t R_{2,\lambda}(0,x)&=0,& x\in\Omega\\
 R_{2,\lambda}(t,x)=0,& \  & (t,x) \in \Sigma,& \end{array}\right.
\ee
with $F_{j,\lambda}=-[(\Box+q_j)(u_j-R_{j,\lambda})]$. The main point in the construction of such solutions, also called oscillating geometric optics (GO in short) solutions,  consists of proving the decay of the expression $R_{j,\lambda}$ with respect to $\lambda\to+\infty$. Actually, we would like to prove the following,
\bel{dego}\lim_{\lambda\to+\infty}\norm{R_{j,\lambda}}_{L^\infty(0,T;L^2(\Omega))}=0.\ee
For $q\in L^\infty(Q)$, the construction of GO solutions of the form \eqref{go1}-\eqref{go2}, with $a_{j,k}$ satisfying \eqref{go3} and $R_{j,\sigma}$ satisfying \eqref{eqgo1}-\eqref{dego}, has been proved in \cite[Lemma 2.2]{KiOk}. The fact that $q$ is bounded plays an important role in the arguments of \cite[Lemma 2.2]{KiOk}. For this reason we can not apply the result of \cite{KiOk} and we need to consider the following.
\begin{lem}\label{l4} Let $q_j\in L^{p_1}(0,T;L^{p_2}(\Omega))$, $j=1,2$, $p_1>2$, $p_2>n$. Then, we can find $u_j\in \mathcal K(Q)$ solving \eqref{wave1}, of the form \eqref{go1}-\eqref{go2}, with $R_{j,\lambda}$, $j=1,2$, satisfying \eqref{dego} and the following estimate
\bel{ll4a} \sup_{\lambda>1}\max_{j=1,2}\norm{R_{j,\lambda}}_{L^\infty(0,T;H^1(\Omega))}<\infty.\ee
\end{lem}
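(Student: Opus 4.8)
The plan is to construct the geometric optics solutions by reducing everything to solving the IBVPs \eqref{eqgo1}--\eqref{eqgo2} for the remainders $R_{j,\lambda}$ and establishing the decay \eqref{dego} together with the uniform bound \eqref{ll4a}. First I would choose the amplitudes $a_{j,k}$ explicitly as solutions of the transport equations \eqref{go3}; since these are constant-coefficient transport equations along the characteristics $x\mp t\omega$, one may simply take $a_{j,k}(t,x)=\chi((-1)^{k}t\,\omega+x)$ for a suitable cutoff $\chi\in\mathcal C^\infty_0(\R^n)$, arranging that the product $a_{1,k}\overline{a_{2,l}}$ is nontrivial on the region of interest and that the supports are compatible with the boundary and initial conditions imposed on $R_{j,\lambda}$. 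With the phases $\psi$ linear, the key algebraic fact is that $(\partial_t^2-\Delta_x)(a_{j,k}e^{i\lambda\phi_{j,k}})$ has a vanishing $\lambda^2$-term (because $|\nabla_x\phi|^2=|\partial_t\phi|^2=1$ for each phase) and a vanishing $\lambda^1$-term precisely by the transport equation \eqref{go3}, leaving only the $\lambda^0$ contribution $(\partial_t^2-\Delta_x)a_{j,k}$.

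Next I would compute the source terms $F_{j,\lambda}=-[(\Box+q_j)(u_j-R_{j,\lambda})]$ and split them into a smooth part coming from $\Box a_{j,k}$ and a potential part coming from $q_j a_{j,k}e^{i\lambda\phi_{j,k}}$. The crucial point is that, unlike in \cite{KiOk}, $q_j$ is only in $L^{p_1}(0,T;L^{p_2}(\Omega))$ and not bounded, so the source $F_{j,\lambda}$ is merely in $L^2(Q)$ rather than in a more regular space. Applying Proposition \ref{p1} to \eqref{eqgo1}--\eqref{eqgo2} (after the harmless time reversal $t\mapsto T-t$ for \eqref{eqgo1}) gives a unique solution $R_{j,\lambda}\in\mathcal C([0,T];H^1_0(\Omega))\cap\mathcal C^1([0,T];L^2(\Omega))$ with the a priori bound
\bel{plan1}\norm{R_{j,\lambda}}_{L^\infty(0,T;H^1(\Omega))}\leq C\norm{F_{j,\lambda}}_{L^2(Q)},\ee
where $C$ depends only on $p_1$, $p_2$, $n$, $T$, $\Omega$ and $\norm{q_j}_{L^{p_1}(0,T;L^{p_2}(\Omega))}$, but not on $\lambda$. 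This uniformity in $\lambda$ is exactly what makes \eqref{ll4a} follow, provided the right-hand side stays bounded.

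The main obstacle, therefore, is to control $\norm{F_{j,\lambda}}_{L^2(Q)}$ uniformly in $\lambda$ and to extract decay. The smooth part $\Box a_{j,k}$ is $\lambda$-independent and bounded in $L^2(Q)$, causing no trouble. The delicate term is $q_j a_{j,k}e^{i\lambda\phi_{j,k}}$: its $L^2(Q)$ norm is bounded by $\norm{q_j a_{j,k}}_{L^2(Q)}$, and by the same Sobolev embedding and H\"older argument used in the proof of Proposition \ref{p1} (writing $\norm{q_j(s,\cdot)a_{j,k}(s,\cdot)}_{L^2(\Omega)}\leq\norm{q_j(s,\cdot)}_{L^{p_2}(\Omega)}\norm{a_{j,k}(s,\cdot)}_{L^{2p_2/(p_2-2)}(\Omega)}$ and integrating in time with exponent $p_1$), this is finite and $\lambda$-independent, giving \eqref{ll4a}. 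For the decay \eqref{dego} I would argue that the oscillating factor $e^{i\lambda\phi_{j,k}}$ forces $F_{j,\lambda}\rightharpoonup 0$ weakly in $L^2(Q)$: by density, approximate $q_j a_{j,k}$ in $L^2(Q)$ by a smooth compactly supported function $h$, for which $\int_Q h\,e^{i\lambda\phi}\overline{\varphi}$ tends to zero by the Riemann--Lebesgue lemma (the phase gradient is nonvanishing), while the error is controlled uniformly by $\norm{q_j a_{j,k}-h}_{L^2(Q)}$. Since Proposition \ref{p1} shows the solution map $F\mapsto R$ is bounded from $L^2(Q)$ into $\mathcal C([0,T];H^1_0(\Omega))$ and compactly into $L^\infty(0,T;L^2(\Omega))$ (via Rellich and the uniform $H^1$-bound \eqref{ll4a}), weak convergence $F_{j,\lambda}\rightharpoonup 0$ upgrades to strong convergence $R_{j,\lambda}\to 0$ in $L^\infty(0,T;L^2(\Omega))$, which is precisely \eqref{dego}. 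I expect the interplay between the merely-$L^2$ regularity of the source and the need for genuine decay — rather than mere boundedness — of $R_{j,\lambda}$ to be the technically sharpest point, since it is here that the unboundedness of $q_j$ obstructs the direct approach of \cite{KiOk}.
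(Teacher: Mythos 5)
Your construction of the remainders and the uniform bound \eqref{ll4a} coincide with the paper's argument: both reduce to Proposition \ref{p1} applied to \eqref{eqgo1}--\eqref{eqgo2}, after observing that the transport equations \eqref{go3} kill the $\lambda^2$- and $\lambda$-terms so that $F_{j,\lambda}$ is a fixed $L^2(Q)$ density (involving $(\Box+q_j)a_{j,k}$) modulated by unimodular oscillating exponentials, hence bounded in $L^2(Q)$ uniformly in $\lambda$. Where you genuinely diverge is the proof of the decay \eqref{dego}. The paper argues quantitatively: it sets $v(t,x)=\int_0^t R_{2,\lambda}(s,x)\,ds$, which solves a free wave equation whose source involves the time-integrated oscillating term $\beta_\lambda(t,x)=\int_0^t H_\lambda(\tau,x)\,d\tau$; an energy estimate with H\"older (to absorb the unbounded $q_2$) and Gronwall yields $\norm{R_{2,\lambda}}^2_{L^\infty(0,T;L^2(\Omega))}\leq C\norm{\beta_\lambda}^2_{L^{p_1/2}(0,T;L^2(\Omega))}$, and then Riemann--Lebesgue plus dominated convergence gives $\beta_\lambda\to0$. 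You instead use a soft functional-analytic scheme: $F_{j,\lambda}\rightharpoonup 0$ weakly in $L^2(Q)$ (Riemann--Lebesgue through a density argument, since the phases are nondegenerate and the amplitudes are $\lambda$-independent), weak--weak continuity of the bounded linear solution operator, and compactness of the solution map to upgrade to strong convergence in $L^\infty(0,T;L^2(\Omega))$ via a subsequence--uniqueness-of-limit argument. Both routes are valid; yours is shorter and avoids the integrated equation and the Gronwall bookkeeping with fractional exponents, while the paper's is quantitative (it bounds $R_{2,\lambda}$ explicitly by a norm of $\beta_\lambda$, so it would yield a rate whenever $\beta_\lambda$ decays with a rate) and stays entirely within elementary energy estimates.

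One repair is needed in your compactness step. As written, you justify compactness of $F\mapsto R$ into $L^\infty(0,T;L^2(\Omega))$ ``via Rellich and the uniform $H^1$-bound \eqref{ll4a}''; this is insufficient, since a family bounded in $L^\infty(0,T;H^1_0(\Omega))$ need not be precompact in $L^\infty(0,T;L^2(\Omega))$ (consider $\sin(\lambda t)f(x)$ with $f\in H^1_0(\Omega)$). You must also invoke equicontinuity in time, which does hold here: estimate \eqref{p1a} of Proposition \ref{p1} controls $\norm{R_{j,\lambda}}_{\mathcal C^1([0,T];L^2(\Omega))}$ uniformly in $\lambda$, so the family is equicontinuous from $[0,T]$ into $L^2(\Omega)$ with pointwise-in-$t$ precompactness by Rellich, and an Arzel\`a--Ascoli (or Aubin--Lions--Simon) argument gives relative compactness in $\mathcal C([0,T];L^2(\Omega))$. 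Since you already cite Proposition \ref{p1}, the missing ingredient is at hand and the gap is immediately fillable; with that correction your proof is complete.
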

\begin{proof} We will consider this result only for $j=2$, the proof for $j=1$ being similar by symmetry. Note first that, \eqref{go3} implies that
$$\begin{aligned}F_{2,\lambda}(t,x)&=-e^{i\lambda (t+x\cdot\omega)}(\Box+q_2)a_{2,1}(t,x)-e^{i\lambda (-t+x\cdot\omega)}(\Box+q_2)a_{2,2}(t,x)\\
\ &=H_\lambda(t,x)\\
\ &=e^{i\lambda t} H_{1,\lambda}(t,x)+e^{-i\lambda t}H_{2,\lambda}(t,x),\end{aligned}$$
with
\bel{esg1}\norm{H_\lambda}_{L^2(Q)}\leq \norm{(\Box+q_2)a_{2,1}}_{L^2(Q)}+\norm{(\Box+q_2)a_{2,2}}_{L^2(Q)}.\ee
Thus, in light Proposition \ref{p1},  we have $R_{2,\lambda}\in \mathcal K(Q)$ with
\bel{esg2}\norm{R_{2,\lambda}}_{\mathcal C^1([0,T];L^2(\Omega))}+\norm{R_{2,\lambda}}_{\mathcal C([0,T];H^1(\Omega))}\leq C(1+\norm{q_2}_{L^{p_1}(0,T;L^{p_2}(\Omega))})(\norm{a_{2,1}}_{W^{2,\infty}(Q)}+\norm{a_{2,2}}_{W^{2,\infty}(Q)}).\ee
In particular, this proves \eqref{ll4a}. The only point that we need to check is the decay with respect to $\lambda$ given by \eqref{dego}.
For this purpose, we consider $v(t,x):=\int_0^tR_{2,\lambda}(s,x)ds$ and we easily check that $v$ solves
\bel{eqqgo2}\left\{ \begin{array}{rcll} \partial_t^2v-\Delta_x v =  G_\lambda,&  (t,x) \in Q ,\\
v(0,x)=0,\ \ \partial_t v(0,x)=0,& x\in\Omega\\
 v(t,x)=0, & (t,x) \in \Sigma,& \end{array}\right.
\ee
with
$$G_\lambda(t,x)=-\int_0^tq_2(s,x)R_{2,\lambda}(s,x)ds+\int_0^tH_\lambda(s,x)ds,\quad (t,x)\in Q.$$

In view of \cite[Theorem 2.1, Chapter 5]{LM2},  since $G_\lambda\in H^1(0,T;L^2(\Omega))$ we have $v\in H^2(Q)$. We define the energy $E(t)$ at time $t$ associated with $v$ and given by
$$E(t):=\int_\Omega \left(|\partial_tv|^2(t,x)+|\nabla_xv|^2(t,x)\right)dx\geq \int_\Omega |R_{2,\lambda}(t,x)|^2dx.$$
Multiplying \eqref{eqqgo2} by $\overline{\partial_tv}$ and taking the real part, we find
$$E(t)=-2\re \left(\int_0^t\int_\Omega \left(\int_0^sq_2(\tau,x)R_{2,\lambda}(\tau,x)d\tau\right)\overline{\partial_tv(s,x)}dxds\right)+2\re \left(\int_0^t\int_\Omega \left(\int_0^sH_\lambda(\tau,x)d\tau\right)\overline{\partial_tv(s,x)}dxds\right).$$
Applying Fubini's theorem, we obtain
\bel{l4a}\begin{aligned}E(t)&=-2\re \left(\int_0^t\int_\Omega q_2(\tau,x)R_{2,\lambda}(\tau,x)\left(\overline{\int_\tau^t\partial_tv(s,x)ds}\right)dxd\tau \right)+2\re \left(\int_0^t\int_\Omega \left(\int_0^sH_\lambda(\tau,x)d\tau\right)\overline{\partial_tv(s,x)}dxds\right)\\
 \ &=-2\re \left(\int_0^t\int_\Omega q_2(\tau,x)R_{2,\lambda}(\tau,x)(\overline{v(t,x)}-\overline{v(\tau,x)})dxd\tau \right)+2\re \left(\int_0^t\int_\Omega \left(\int_0^sH_\lambda(\tau,x)d\tau\right)\overline{\partial_tv(s,x)}dxds\right).\end{aligned}\ee
On the other hand, applying the H\"older inequality, we get
 $$\begin{aligned}\abs{\int_0^t\int_\Omega q_2(\tau,x)R_{2,\lambda}(\tau,x)\overline{v(t,x)}dxd\tau}&\leq \int_0^t \norm{\pd _tv(\tau,\cdot)}_{L^2(\Omega)}\norm{q_2(\tau,\cdot)v(t,\cdot)}_{L^2(\Omega)}d\tau\\
\ &\leq \left(\int_0^t \norm{\pd _tv(\tau,\cdot)}_{L^2(\Omega)}\norm{q_2(\tau,\cdot)}_{L^{p_2}(\Omega)}d\tau\right)\norm{v(t,\cdot)}_{L^{\frac{p_2-2}{2p_2}}(\Omega)}.\end{aligned}$$
Then, combining the Sobolev embedding theorem with the Poincarr\'e inequality, we deduce that
$$\begin{aligned}\abs{\int_0^t\int_\Omega q_2(\tau,x)R_{2,\lambda}(\tau,x)\overline{v(t,x)}dxd\tau}&\leq C\left(\int_0^t \norm{\pd _tv(\tau,\cdot)}_{L^2(\Omega)}\norm{q_2(\tau,\cdot)}_{L^{p_2}(\Omega)}d\tau\right)\norm{v(t,\cdot)}_{H^1(\Omega)}\\
\ &\leq C\left(\int_0^t E(\tau)^{1/2}\norm{q_2(\tau,\cdot)}_{L^{p_2}(\Omega)}d\tau\right)E(t)^{1/2}\\
\ &\leq C^2\left(\int_0^t E(\tau)^{1/2}\norm{q_2(\tau,\cdot)}_{L^{p_2}(\Omega)}d\tau\right)^2+ \frac{E(t)}{4}\\
\ &\leq C^2T\left(\int_0^t E(\tau)\norm{q_2(\tau,\cdot)}_{L^{p_2}(\Omega)}^2d\tau\right)+ \frac{E(t)}{4},\end{aligned}$$
with $C$ depending only on $\Omega$. Applying again the H\"older inequality, we get
\bel{l4b}\abs{\int_0^t\int_\Omega q_2(\tau,x)R_{2,\lambda}(\tau,x)\overline{v(t,x)}dxd\tau}\leq C\left(\int_0^t E(\tau)^{\frac{p_1}{p_1-2}}d\tau\right)^{{\frac{p_1-2}{p_1}}}\norm{q_2}_{L^{p_1}(0,T;L^{p_2}(\Omega))}^2+ \frac{E(t)}{4}.\ee
In the same way, we obtain
\bel{l4c}\begin{aligned}\abs{\int_0^t\int_\Omega q_2(\tau,x)R_{2,\lambda}(\tau,x)\overline{v(\tau,x)}dxd\tau}&\leq C \int_0^t E(\tau)\norm{q_2(\tau,\cdot)}_{L^{p_2}(\Omega)}d\tau\\
\ &\leq C\left(\int_0^t E(\tau)^{\frac{p_1}{p_1-2}}d\tau\right)^{{\frac{p_1-2}{p_1}}}\norm{q_2}_{L^{\frac{p_1}{2}}(0,T;L^{p_2}(\Omega))}.\end{aligned}\ee
Finally, fixing
$$\beta_\lambda(t,x):=\int_0^tH_\lambda(\tau,x)d\tau,$$
we find
\bel{l4d} \begin{aligned}\abs{\int_0^t\int_\Omega \left(\int_0^sH_\lambda(\tau,x)d\tau\right)\overline{\partial_tv(s,x)}dxds}&\leq \norm{\beta_\lambda}_{L^{\frac{p_1}{2}}(0,T;L^2(\Omega))}
\left(\int_0^t E(\tau)^{\frac{p_1}{2(p_1-2)}}d\tau\right)^{{\frac{p_1-2}{p_1}}}\\
\ &\leq \norm{\beta_\lambda}_{L^{\frac{p_1}{2}}(0,T;L^2(\Omega))}^2+\left(\int_0^t E(\tau)^{\frac{p_1}{2(p_1-2)}}d\tau\right)^{{\frac{2(p_1-2)}{p_1}}}\\
\ &\leq \norm{\beta_\lambda}_{L^{\frac{p_1}{2}}(0,T;L^2(\Omega))}^2+T^{{\frac{p_1-2}{p_1}}}\left(\int_0^t E(\tau)^{\frac{p_1}{p_1-2}}d\tau\right)^{{\frac{p_1-2}{p_1}}}.\end{aligned}\ee
Combining \eqref{l4a}-\eqref{l4d}, we deduce that
$$E(t)\leq \frac{E(t)}{4}+C(\norm{q_2}_{L^{p_1}(0,T;L^{p_2}(\Omega))}+1)^2\left(\int_0^t E(\tau)^{\frac{p_1}{p_1-2}}d\tau\right)^{{\frac{p_1-2}{p_1}}}+\norm{\beta_\lambda}_{L^{\frac{p_1}{2}}(0,T;L^2(\Omega))}^2$$
and we get
$$E(t)\leq C(\norm{q_2}_{L^{p_1}(0,T;L^{p_2}(\Omega))}+1)^2\left(\int_0^t E(\tau)^{\frac{p_1}{p_1-2}}d\tau\right)^{{\frac{p_1-2}{p_1}}}+\frac{4\norm{\beta_\lambda}_{L^{\frac{p_1}{2}}(0,T;L^2(\Omega))}^2}{3},$$
with $C$ depending only on $\Omega$ and $T$. Now taking the power ${\frac{p_1}{p_1-2}}$ on both side of this inequality, we get
$$E(t)^{\frac{p_1}{p_1-2}}\leq 2^{\frac{p_1}{p_1-2}}C^{\frac{p_1}{p_1-2}}(\norm{q_2}_{L^{p_1}(0,T;L^{p_2}(\Omega))}+1)^{\frac{2p_1}{p_1-2}}\int_0^t E(\tau)^{\frac{p_1}{p_1-2}}d\tau+2^{\frac{p_1}{p_1-2}}\left(\frac{4\norm{\beta_\lambda}_{L^{\frac{p_1}{2}}(0,T;L^2(\Omega))}^2}{3}\right)^{\frac{p_1}{p_1-2}}$$
and applying the Gronwall inequality, we obtain
$$E(t)^{\frac{p_1}{p_1-2}}\leq C_1\norm{\beta_\lambda}_{L^{\frac{p_1}{2}}(0,T;L^2(\Omega))}^{\frac{2p_1}{p_1-2}}e^{C_2t}\leq C_1\norm{\beta_\lambda}_{L^{\frac{p_1}{2}}(0,T;L^2(\Omega))}^{\frac{2p_1}{p_1-2}}e^{C_2T},$$
where $C_1$ depends only on $p_1$ and $C_2$ on $\norm{q_2}_{L^{p_1}(0,T;L^{p_2}(\Omega))}$, $\Omega$ and $T$. According to this estimate, the proof of the lemma will be completed if we prove that
\bel{l4e}\lim_{\lambda\to+\infty}\norm{\beta_\lambda}_{L^\infty(0,T;L^2(\Omega))}=0.\ee
This follows from some arguments similar to the end of the proof of \cite[Lemma 2.2]{KiOk} that we recall for sake of completeness. Applying the Riemann-Lebesgue lemma, for all $t\in[0,T]$ and almost every $x\in \Omega$, we have
$$\lim_{\lambda\to+\infty}\int_0^te^{i\lambda \tau}H_{1,\lambda}(\tau,x)d\tau=\lim_{\lambda\to+\infty}\int_0^te^{-i\lambda \tau}H_{2,\lambda}(\tau,x)d\tau=0.$$
Therefore, for all $t\in[0,T]$ and almost every $x\in \Omega$, we obtain
$$\lim_{\lambda\to+\infty}\beta_\lambda(t,x)=\lim_{\lambda\to+\infty}\int_0^tH_\lambda(\tau,x)d\tau=0.$$
Moreover, from the definition of $H_\lambda$, we get
$$\abs{\int_0^tH_{\lambda}(\tau,x)d\tau}\leq \int_0^t(\abs{(\Box+q_2)a_{2,1}}+\abs{(\Box+q_2)a_{2,2}}) ds,\quad t\in[0,T],\ \ x\in \Omega.$$
Thus, we deduce from Lebesgue's dominated convergence theorem
that
$$\lim_{\lambda\to+\infty}\norm{\int_0^tH_\lambda(\tau,\cdot)d\tau}_{L^2(\Omega)}=0, \quad t \in [0,T]. $$
Combining this with the estimate
\begin{align*}
&\norm{\int_0^{t_2}H_\lambda(\tau,\cdot)d\tau-\int_0^{t_1}H_\lambda(\tau,\cdot)d\tau}_{L^2(\Omega)}
\\&\qquad
\leq (t_2-t_1)^{\frac{1}{2}}[\norm{(\Box+q_2)a_{2,1}}_{L^2(Q)}+\norm{(\Box+q_2)a_{2,2}}_{L^2(Q))}],
\quad 0 \leq t_1<t_2 \leq T,
\end{align*}
we deduce \eqref{l4e}. This completes the proof the lemma. \end{proof}
\subsection{Proof of Theorem \ref{thm3} with restriction at $t=0$ or $t=T$}

In this section we will prove that \eqref{thm3a} or \eqref{thm3b} implies that $q_1=q_2$. We start by assuming that \eqref{thm3a} is fulfilled and we fix $q=q_2-q_1$ on $Q$ extended by $0$ on $\R^{1+n}\setminus Q$. We fix $\lambda>1$, $\omega\in\mathbb S^{n-1}$ and we fix $\xi\in\R^{1+n}$ satisfying $(1,-\omega)\cdot\xi=0$. Then, in view of Lemma \ref{l4}, we can consider $u_j\in\mathcal K(Q)$, $j=1,2$, solving \eqref{wave1}, of the form \eqref{go1}-\eqref{go2}, with $a_{1,1}(t,x)=(2\pi)^{\frac{n+1}{2}}e^{-i(t,x)\cdot\xi}$, $a_{1,2}=0$, $a_{2,1}=1$, $a_{2,2}=-1$ and with condition \eqref{dego}-\eqref{ll4a} fulfilled, that is,
\bel{e3}
\begin{aligned}
&u_1(t,x)=(2\pi)^{\frac{n+1}{2}} e^{-i(t,x)\cdot \xi}e^{-i\lambda(t+x\cdot w)}+R_{1,\lambda}(t,x),\\
& u_2(t,x)=e^{i\lambda(t+x\cdot w)}-e^{i\lambda(-t+x\cdot w)}+R_{2,\lambda}(t,x).
\end{aligned}\ee
Obviously, we have $u_2(0,x)=0$, since $R_{2,\lambda}(0,x)=0$ by \eqref{eqgo2}. In view of Proposition \ref{p1}, there exists a unique weak solution $v\in\mathcal{K}(Q)$ to the IBVP:
\begin{align*}
&\partial_t^2v-\Delta v+q_1v=0\quad\mbox{in}\quad Q,\\
&v|_{t=0}=u_2|_{t=0}=0,\;\partial_tv|_{t=0}=\partial_t u_2|_{t=0},\quad v|_\Sigma=u_2|_\Sigma.
\end{align*} Setting $u:=v-u_2$, we see
\bel{e1}\begin{aligned}
&\partial_t^2u-\Delta u+q_1u=(q_2-q_1)u_2\quad\mbox{in}\quad Q,\\
&u|_{t=0}=0,\;\partial_tu|_{t=0}=0,\quad u|_\Sigma=0.
\end{aligned}
\ee Noting that the inhomogeneous term $(q_2-q_1)u_2\in L^2(Q)$, due to the fact that $q_2-q_1\in L^2(0,T; L^{p_2}(\Omega))$ and $u_2\in L^\infty (0,T;H^1(\Omega))$. Hence, again using Proposition \ref{p1} gives that
$u\in \mathcal C([0,T];H^1_0(\Omega))\cap \mathcal C^1([0,T];L^2(\Omega))$.
Therefore, we have $u\in  \mathcal K(Q)\cap H_{\Box}(Q)$.
 Combining this with $u_1\in \mathcal K(Q)\cap H_\Box(Q)$, we deduce that $$(\partial_tu,-\nabla_xu),(\partial_tu_1,-\nabla_xu_1)\in H_{\textrm{div}}(Q)=\{F\in L^2(Q;\mathbb C^{n+1}):\ \textrm{div}_{(t,x)}F\in L^2(Q)\}.$$
Now, in view of \cite[Lemma 2.2]{Ka} we can multiply $u_1$ to the equation in \eqref{e1} and apply Green formula to get
\bel{e2}\begin{aligned}
&\int_Q (q_2-q_1)u_2 u_1\, dxdt\\
=&\int_Q (\partial_t^2u-\Delta u+q_1u) u_1\,dxdt\\
=&\int_Q (\Box u+q_1u) u_1-(\Box u_1+q_1u_1) u \,dxdt\\
=&\int_Q \Box u u_1-\Box u_1 u \,dxdt\\
=&\left\langle(\partial_tu,-\nabla_xu)\cdot \textbf{n}, u_1\right\rangle_{H^{-{1\over2}}(\partial Q),H^{{1\over2}}(\partial Q)}
-\left\langle(\partial_tu_1,-\nabla_xu_1)\cdot \textbf{n}, u\right\rangle_{H^{-{1\over2}}(\partial Q),H^{{1\over2}}(\partial Q)}
\end{aligned}
\ee
with $\textbf{n}$ the outward unite normal vector to $\partial Q$. Since $C_{q_1}(0)=C_{q_2}(0)$ and $v|_{t=0}=u_2|_{t=0}=0$, we see $\partial_\nu u|_\Sigma =u|_{t=T}=\partial_tu|_{t=T}=0$, in addition to the boundary conditions of $u$ in \eqref{e1}.
Consequently, it follows from \eqref{e2} that
$$
\int_Q (q_2-q_1)u_2 u_1\, dxdt=0.
$$
Inserting the expressions of $u_j$ ($j=1,2$) given by \eqref{e3}  to the previous identity gives the relation
\begin{align*}
&0=(2\pi)^{(n+1)/2}\int_Q q(t,x)e^{-i (t,x)\cdot \xi}\,dxdt+\tilde{R}_\lambda,\\
& \tilde{R}_\lambda:=(2\pi)^{(n+1)/2}\int_Q q(t,x)e^{-i (t,x)\cdot \xi} \left(-e^{-2i\lambda t}+e^{-i\lambda (t+x\cdot w)}\,R_{2,\lambda}(t,x) \right)\,dxdt\\
&\qquad\quad+\int_Q q(t,x) R_{1,\lambda}(t,x)\left(e^{i\lambda (t+x\cdot w)}-
e^{i\lambda (-t+x\cdot w)}+R_{2,\lambda}(t,x) \right)\,dxdt
\end{align*}
for all $\lambda>1$.
Using the fact that $q\in L^2(Q)$ and applying the Riemann-Lebesgue lemma and
\eqref{dego}, we deduce that
\begin{align*}
&\left|\int_Q q(t,x)e^{-i (t,x)\cdot \xi} \left(-e^{-2i\lambda t}+e^{-i\lambda (t+x\cdot w)}\,R_{2,\lambda}(t,x) \right)\,dxdt\right|\rightarrow0,\\
&\left|\int_Q q(t,x) R_{1,\lambda}(t,x)\left(e^{i\lambda (t+x\cdot w)}-
e^{i\lambda (-t+x\cdot w)}\right)\,dxdt\right|\rightarrow 0
\end{align*}
as $\lambda\rightarrow\infty$.  On the other hand, by Cauchy-Schwarz inequality
it holds that
\begin{align*}
\left|\int_Q q(t,x) R_{1,\lambda}(t,x) R_{2,\lambda}(t,x)\,dxdt\right|
&\leq  ||q\,R_{1,\lambda}||_{L^2(Q)}\,||R_{2,\lambda}||_{L^2(Q)}\\
&\leq C\, ||q||_{L^{p_1}(0,T; L^{p_2}(\Omega))}\,||R_{1,\lambda} ||_{L^{\infty}(0,T; H^{1}(\Omega))}\,||R_{2,\lambda}||_{L^{\infty}(0,T; L^{2}(\Omega))},
\end{align*}
which tends to zero
as $\lambda\rightarrow\infty$ due to the decaying behavior of $R_{j,\lambda}$ (see \eqref{dego}) and estimate \eqref{ll4a}.
 Therefore,
 $|\tilde{R}_\lambda|\rightarrow 0$ as $\lambda\rightarrow\infty$. It then follows that
\bel{tata}\mathcal Fq(\xi)=(2\pi)^{\frac{n+1}{2}}\int_{\R^{1+n}}q(t,x)e^{-i(t,x)\cdot\xi}dxdt=0.\ee
Since $\omega\in \mathbb S^{n-1}$ is arbitrary chosen, we deduce that for any $\omega\in \mathbb S^{n-1}$ and any $\xi$ lying in the  hyperplane $\{\zeta\in\R^{1+n}:\ \zeta\cdot (1,-\omega)=0\}$ of $\R^{1+n}$, the Fourier transform $\mathcal Fq$ is null at $\xi$. On the other hand, since $q\in L^{1}( \R^{1+n})$ is compactly supported in $\overline{Q}$, we know that $\mathcal Fq$ is a complex valued real-analytic function and it follows that $\mathcal Fq=0$. By inverse Fourier transform this yields the vanishing of $q$, which implies that $q_1=q_2$ in $Q$.

To prove that the relation \eqref{thm3b} implies $q_1=q_2$, we shall  consider $u_j\in\mathcal K(Q)$, $j=1,2$, solving \eqref{wave1}, of the form \eqref{go1}-\eqref{go2}, with $a_{1,1}=1$, $a_{1,2}=-1$, $a_{2,1}=(2\pi)^{\frac{n+1}{2}}e^{-i(t,x)\cdot\xi}$, $a_{2,2}=0$ and with condition \eqref{dego}-\eqref{ll4a} fulfilled. Then, by using the fact that $u_1(T,x)=0$, $x\in\Omega$, and by repeating the above arguments, we deduce that $q_1=q_2$. For brevity we omit the details.

We have proved so far that either of the conditions \eqref{thm3a} and \eqref{thm3b} implies $q_1=q_2$. It remains to prove that for $T>\textrm{Diam}(\Omega)$, the condition \eqref{thm3c} implies $q_1=q_2$.

\subsection{Proof of Theorem \ref{thm3} with restriction at $t=0$ and $t=T$}
In this section, we assume that $T>\textrm{Diam}(\Omega)$ is fulfilled and we will show that \eqref{thm3c} implies $q_1=q_2$. For this purpose, we fix $\lambda>1$, $\omega\in\mathbb S^{n-1}$ and $\epsilon=\frac{T-\textrm{Diam}(\Omega)}{4}$. We set also $\chi\in\mathcal C^\infty_0(-\epsilon,T+\textrm{Diam}(\Omega)+\epsilon)$ satisfying $\chi=1$ on $[0,T+\textrm{Diam}(\Omega)]$ and $x_0\in\overline{\Omega}$ such that
$$x_0\cdot\omega=\inf_{x\in\overline{\Omega}}x\cdot\omega.$$
We introduce the solutions $u_j\in\mathcal K(Q)$, $j=1,2$, of \eqref{wave1}, of the form \eqref{go1}-\eqref{go2}, with $$a_{1,1}(t,x)=(2\pi)^{\frac{n+1}{2}}\chi(t+(x-x_0)\cdot\omega)e^{-i(t,x)\cdot\xi},\quad a_{1,2}(t,x)=-(2\pi)^{\frac{n+1}{2}}\chi((2T-t)+(x-x_0)\cdot\omega)e^{-i(2T-t,x)\cdot\xi},$$ $$a_{2,1}(t,x)=\chi(t+(x-x_0)\cdot\omega),\quad a_{2,2}(t,x)=-\chi(-t+(x-x_0)\cdot\omega)$$ and with condition \eqref{dego}-\eqref{ll4a} fulfilled. Then, one can check that $u_1(T,x)=u_2(0,x)=0$, $x\in\Omega$, and repeating the arguments of the previous subsection we deduce that condition \eqref{thm3c} implies the orthogonality identity
\bel{thm3g}\int_Q q(t,x)u_2(t,x)u_1(t,x)dxdt=0.\ee
It remains to proves that this implies $q=0$. Note that
\bel{thm3h}\begin{aligned}\int_Q q(t,x)u_2(t,x)u_1(t,x)dxdt=&(2\pi)^{\frac{n+1}{2}}\int_{\R^{1+n}}q(t,x)\chi^2(t+(x-x_0)\cdot\omega)e^{-i(t,x)\cdot\xi}dxdt+\int_Qe^{-2i\lambda t}a_{1,1}a_{2,2}dxdt\\
\ &+\int_Qe^{-2i\lambda (T-t)}a_{1,2}a_{2,1}dxdt+e^{-2i\lambda T}\int_Qa_{1,2}a_{2,2}dxdt+\int_QZ_{\lambda}(t,x)dxdt,\end{aligned}\ee
with
$$Z_\lambda=q(u_1-R_{1,\lambda})R_{2,\lambda}+q(u_2-R_{2,\lambda})R_{1,\lambda}+qR_{2,\lambda}R_{1,\lambda}.$$
In a similar way to the previous subsection, one can check that \eqref{dego}-\eqref{ll4a} imply that
$$\lim_{\lambda\to+\infty}\int_QZ_{\lambda}dxdt=0.$$
Moreover, the Riemann-Lebesgue lemma implies
$$\lim_{\lambda\to+\infty}\left(\int_Qe^{-2i\lambda t}a_{1,1}a_{2,2}dxdt+\int_Qe^{-2i\lambda (T-t)}a_{1,2}a_{2,1}dxdt\right)=0.$$
In addition, using the fact that for $(t,x)\in Q$ we have
$$0\leq t+(x-x_0)\cdot\omega\leq T+|x-x_0|\leq T+\textrm{Diam}(\Omega),$$
we deduce that
$$q(t,x)\chi^2(t+(x-x_0)\cdot\omega)=q(t,x),\quad (t,x)\in\R^{1+n}$$
and that
$$(2\pi)^{\frac{n+1}{2}}\int_{\R^{1+n}}q(t,x)\chi^2(t+(x-x_0)\cdot\omega)e^{-i(t,x)\cdot\xi}dxdt=\mathcal Fq(\xi).$$
Thus, repeating the arguments of the previous subsection we can deduce that $q_1=q_2$ provided that
\bel{thm3i}\int_Qa_{1,2}a_{2,2}dxdt=0.\ee
Since $a_{2,2}(t,x)=-\chi(-t+(x-x_0)\cdot\omega)$ and $a_{1,2}=-(2\pi)^{\frac{n+1}{2}}\chi((2T-t)+(x-x_0)\cdot\omega)e^{-i(t,x)\cdot\xi}$, we deduce that $$\textrm{supp}(a_{2,2})\subset\{(t,x)\in \R^{1+n}:\ (x-x_0)\cdot\omega\geq t-\epsilon\},$$
$$\textrm{supp}(a_{1,2})\subset\{(t,x)\in \R^{1+n}:\ 2T-t+(x-x_0)\cdot\omega\leq T+\textrm{Diam}(\Omega)+\epsilon\}.$$
But, for any $(t,x)\in\{(t,x)\in \R^{1+n}:\ (x-x_0)\cdot\omega\geq t-\epsilon\}$, one can check that
$$2T-t+(x-x_0)\cdot\omega\geq 2T-\epsilon=T+\textrm{Diam}(\Omega)+3\epsilon>T+\textrm{Diam}(\Omega)+\epsilon.$$
Therefore, we have
$$\{(t,x)\in \R^{1+n}:\ (x-x_0)\cdot\omega\geq t-\epsilon\}\cap \{(t,x)\in \R^{1+n}:\ 2T-t+(x-x_0)\cdot\omega\leq T+\textrm{Diam}(\Omega)+\epsilon\}=\emptyset$$
and by the same way that $\textrm{supp}(a_{2,2})\cap \textrm{supp}(a_{1,2})=\emptyset$. This implies \eqref{thm3i} and by the same way that $q_1=q_2$. Thus, the proof of Theorem \ref{thm3} is completed.

\section{Proof of   Theorem \ref{thm1}}
In the previous section we have seen that the oscillating  geometric optics solutions \eqref{os} can be used for the recovery of some general singular time-dependent potential. We have even proved that, by adding a second term, we can restrict the data on the bottom $t=0$ and top $t=T$ of $Q$ while  avoiding a "reflection". Nevertheless, as mentioned in the introduction,  it is not clear how one can adapt this approach  to restrict data on the lateral boundary $\Sigma$ without requiring additional smoothness  or geometrical assumptions. In this section,  we use a different strategy for restricting the data at $\Sigma$. Namely,  we replace the oscillating GO solutions  \eqref{os} by exponentially growing and decaying solutions, of the form \eqref{exp}, in order to restrict the data on $\Sigma$ by mean of a Carleman estimate. In this section, we assume that $q_1,q_2\in L^p(Q)$, with $p>n+1$, and we will prove that \eqref{thm1a} implies $q_1=q_2$.   For this purpose, we will start with the construction of  solutions of \eqref{wave} taking the form \eqref{exp}. Then we will show  Carleman estimates for unbounded potentials and we will complete the proof of Theorem \ref{thm1}.
\subsection{Geometric optics solutions for Theorem \ref{thm1}}

Let $\omega\in\mathbb S^{n-1}$ and let $\xi\in \R^{1+n}$ be such that $\xi\cdot (1,-\omega)=0$.
In this section we consider exponentially decaying solutions $u_1\in H^1(Q)$ of the equation $(\pd_t^2-\Delta_x +q_1)u_1=0$ in $Q$ taking the form
\bel{GO1} u_1(t,x)=e^{-\lambda(t+x\cdot\omega)}(e^{-i(t,x)\cdot\xi}+w_1(t,x)),\ee
and exponentially growing solution $u_2\in H^1(Q)$ of the equation $(\pd_t^2-\Delta_x+q_2)u_2=0$ in $Q$ taking the form
\bel{GO2} u_2(t,x)=e^{\lambda (t+x\cdot\omega)}(1+w_2(t,x))\ee
where  $\lambda>1$ and the term $w_j\in H^1(Q)$, $j=1,2$, satisfies
\bel{CGO11}\norm{w_j}_{H^1(Q)}+\lambda\norm{w_j}_{L^2(Q)}\leq C,\ee
with $C$ independent of  $\lambda$.
We summarize these results in the following way.
\begin{prop}\label{p4} There exists $\lambda_1>1$ such that for $\lambda>\lambda_1$ we can find a solution $u_1\in H^1(Q)$ of $\Box u_1+q_1u_1=0$ in $Q$ taking the form \eqref{GO1}
with $w_1\in H^1(Q)$ satisfying \eqref{CGO11} for $j=1$.\end{prop}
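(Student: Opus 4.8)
The plan is to insert the ansatz \eqref{GO1} into $\Box u_1+q_1u_1=0$ and to read off an equation for $w_1$. Writing $\Phi(t,x)=t+x\cdot\omega$ and $a(t,x)=e^{-i(t,x)\cdot\xi}$, the operator identity $e^{\lambda\Phi}\partial_t^2e^{-\lambda\Phi}=(\partial_t-\lambda)^2$ and its spatial analogue, together with $\abs{\omega}=1$, give
\[e^{\lambda\Phi}\Box\bigl(e^{-\lambda\Phi}v\bigr)=\Box v-2\lambda(\partial_t-\omega\cdot\nabla_x)v,\]
the quadratic term in $\lambda$ disappearing because $\Phi$ is a null phase for $\Box$, i.e. $(\partial_t\Phi)^2-\abs{\nabla_x\Phi}^2=0$. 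Since $\xi\cdot(1,-\omega)=0$, one has $(\partial_t-\omega\cdot\nabla_x)a=-i\,\bigl(\xi\cdot(1,-\omega)\bigr)a=0$, so the leading amplitude $a$ lies in the kernel of the transport operator $\partial_t-\omega\cdot\nabla_x$. Hence $u_1$ of the form \eqref{GO1} solves the equation if and only if
\[P_\lambda w_1+q_1w_1=f_\lambda,\qquad P_\lambda:=\Box-2\lambda(\partial_t-\omega\cdot\nabla_x),\quad f_\lambda:=-(\Box+q_1)a.\]
Writing $\xi=(\xi_0,\xi')$ one computes $\Box a=(\abs{\xi'}^2-\xi_0^2)a$, and since $\abs{a}\equiv1$ and $L^p(Q)\hookrightarrow L^2(Q)$, the source obeys $\norm{f_\lambda}_{L^2(Q)}\leq C(1+\norm{q_1}_{L^p(Q)})$ uniformly in $\lambda$. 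Everything thus reduces to solving this equation with the gain recorded in \eqref{CGO11}.

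\textbf{A negative-order Carleman estimate and solvability of $P_\lambda$.} The core step is to invert $P_\lambda$ with a power of $\lambda$ gained. I would work with the formal adjoint $P_\lambda^*=\Box+2\lambda(\partial_t-\omega\cdot\nabla_x)$ and try to establish a Carleman estimate
\[\lambda\norm{z}_{L^2(Q)}\leq C\norm{P_\lambda^*z}_{H^{-1}_\lambda(Q)}+(\textrm{controlled boundary terms}),\qquad \lambda>\lambda_1,\]
in a $\lambda$-dependent negative-order Sobolev norm $H^{-1}_\lambda$. A negative-order norm is genuinely needed here: expanding
\[\norm{P_\lambda^*z}_{L^2(Q)}^2=\norm{\Box z}_{L^2(Q)}^2+4\lambda^2\norm{(\partial_t-\omega\cdot\nabla_x)z}_{L^2(Q)}^2+4\lambda\,\re\langle\Box z,(\partial_t-\omega\cdot\nabla_x)z\rangle_{L^2(Q)},\]
the cross term integrates by parts to a pure boundary contribution (both operators having constant coefficients), so no interior term of order $\lambda$ is produced; moreover the conjugated symbol $-\tau^2+\abs{\eta}^2-2i\lambda(\tau-\omega\cdot\eta)$ vanishes on the nontrivial characteristic set $\{\tau=\omega\cdot\eta,\ \abs{\eta}=\abs{\tau}\}$, so $\Phi$ is a limiting Carleman weight and one must work near this variety with a loss of one derivative, in the spirit of the H\"ahner/Sylvester--Uhlmann estimates for the elliptic problem. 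Granting this estimate, a Hahn--Banach and Riesz representation argument on the range of $P_\lambda^*$ produces a solution operator $E_\lambda$ for $P_\lambda$ whose bounds are designed to yield both controls in \eqref{CGO11}, the null derivative being governed by the $4\lambda^2$ term above and the remaining gain by the negative-order refinement.

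\textbf{Absorbing the unbounded potential.} To handle the genuinely unbounded term $q_1w_1$, I would recast the reduced equation as the fixed-point problem $w_1=E_\lambda(f_\lambda-q_1w_1)$. Here the hypothesis $p>n+1$ enters through the Sobolev embedding $H^1(Q)\hookrightarrow L^{\frac{2p}{p-2}}(Q)$ (valid precisely when $p\geq n+1$, with room to spare when $p>n+1$), which together with H\"older's inequality gives
\[\norm{q_1v}_{L^2(Q)}\leq\norm{q_1}_{L^p(Q)}\norm{v}_{L^{\frac{2p}{p-2}}(Q)}\leq C\norm{q_1}_{L^p(Q)}\norm{v}_{H^1(Q)}.\]
Measuring $q_1w_1$ in the negative-order norm $H^{-1}_\lambda$ turns the $\lambda$-gain of $E_\lambda$ into a contraction constant of the form $C\lambda^{-\delta}\norm{q_1}_{L^p(Q)}$ with $\delta>0$; choosing $\lambda_1$ large enough makes it $<1$, so the fixed point $w_1$ exists and inherits the bound $\norm{w_1}_{H^1(Q)}+\lambda\norm{w_1}_{L^2(Q)}\leq C$, uniformly in $\lambda$. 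This is exactly \eqref{CGO11} for $j=1$, and since Proposition \ref{p4} concerns only the decaying solution $u_1$, this completes the construction.

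\textbf{Main obstacle.} The reduction and the absorption of $q_1$ are routine once the solvability is in hand; the real difficulty is the negative-order Carleman estimate for $P_\lambda$ and the extraction from it of a solution operator delivering \emph{simultaneously} the $\lambda^{-1}$ gain in $L^2(Q)$ and the uniform $H^1(Q)$ bound. The linear null weight $\Phi$ is not pseudoconvex and its conjugated symbol vanishes on a full characteristic cone, so the positive gain cannot come from a naive energy identity and must be manufactured microlocally; arranging enough $\lambda$-gain to dominate the unbounded coefficient $q_1\in L^p(Q)$ is precisely what forces the passage to negative order Sobolev norms.
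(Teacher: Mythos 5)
Your overall architecture is the paper's: the reduction to $P_{\omega,-\lambda}w_1+q_1w_1=F$ with $F=-(\Box+q_1)e^{-i(t,x)\cdot\xi}\in L^2(Q)$ uniformly in $\lambda$, a Carleman estimate for the adjoint in the $\lambda$-weighted negative norm $H^{-1}_\lambda$, and a Hahn--Banach duality argument producing $w_1$. (The paper does not re-prove the free estimate microlocally; it quotes it from \cite[Lemma 5.1]{Ki4} as Lemma \ref{l1}.) However, your central displayed estimate, $\lambda\norm{z}_{L^2}\leq C\norm{P_\lambda^*z}_{H^{-1}_\lambda}$, is one power of $\lambda$ too strong and is in fact false: take $z\in\mathcal C^\infty_0(Q)$ fixed, independent of $\lambda$ (so no boundary terms arise); then $\norm{P_\lambda^*z}_{H^{-1}_\lambda}\leq\lambda^{-1}\norm{P_\lambda^*z}_{L^2}\leq\lambda^{-1}\left(\norm{\Box z}_{L^2}+2\lambda\norm{(\partial_t-\omega\cdot\nabla_x)z}_{L^2}\right)\leq C'$, while your left-hand side grows like $\lambda$. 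The correct statement (Lemma \ref{l1}) is $\norm{z}_{L^2(\R^{1+n})}\leq C\norm{P_{\omega,\lambda}z}_{H^{-1}_\lambda(\R^{1+n})}$, the single $\lambda$-gain being already encoded in the norm; and this weaker estimate suffices, since the duality construction then yields $w_1\in H^1_\lambda(\R^{1+n})$ with $\norm{w_1}_{H^1_\lambda}\leq C\norm{F}_{L^2}$, which is exactly \eqref{CGO11}.

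The second genuine gap is the contraction step. The solution operator $E_\lambda$ that Hahn--Banach extracts from the (correct) estimate accepts $L^2$ sources -- the functional is bounded by $\norm{z}_{L^2}\norm{F}_{L^2}$ -- so ``measuring $q_1w_1$ in $H^{-1}_\lambda$'' is not available: that would require $\norm{z}_{H^1_\lambda}\leq C\norm{P_\lambda^*z}_{H^{-1}_\lambda}$, a two-derivative shift the null weight does not provide. With what you actually prove, $\norm{E_\lambda(q_1w)}_{H^1_\lambda}\leq C\norm{q_1w}_{L^2}\leq C\norm{q_1}_{L^p(Q)}\norm{w}_{H^1(Q)}$, and since $\norm{w}_{H^1}\leq\norm{w}_{H^1_\lambda}$ your H\"older bound through $H^1(Q)\hookrightarrow L^{\frac{2p}{p-2}}(Q)$ produces no negative power of $\lambda$ at all, hence no contraction. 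The missing idea is interpolation in the $\lambda$-weighted scale, which is how the paper's Lemma \ref{l2} works: with $r=\frac{n+1}{p}<1$, one has $\norm{q_1v}_{H^{-1}_\lambda}\leq\lambda^{r-1}\norm{q_1v}_{H^{-r}(\R^{1+n})}\leq C\lambda^{r-1}\norm{q_1}_{L^p(Q)}\norm{v}_{L^2(Q)}$ by the dual Sobolev embedding and H\"older, so the potential is absorbed directly into the Carleman estimate, $\norm{z}_{L^2}\leq C\norm{P_{\omega,\lambda}z+q_1z}_{H^{-1}_\lambda}$, and a single Hahn--Banach application for the perturbed operator finishes the proof -- no fixed point is needed. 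Your iteration is repairable along the same lines, via $\norm{q_1w}_{L^2}\leq C\norm{q_1}_{L^p}\norm{w}_{H^r}\leq C\lambda^{r-1}\norm{q_1}_{L^p}\norm{w}_{H^1_\lambda}$, which supplies the constant $C\lambda^{-\delta}$ you asserted; note this also locates where $p>n+1$ is genuinely used ($r<1$ so that $\lambda^{r-1}\to0$), whereas the embedding threshold $p\geq n+1$ alone yields no decay at the endpoint.
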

\begin{prop}\label{p5} There exists $\lambda_2>\lambda_1$ such that for $\lambda>\lambda_2$ we can find a solution  $u_2\in H^1(Q)$ of $\Box u_2+q_2u_2=0$ in $Q$ taking the form \eqref{GO2}
with $w_2\in H^1(Q)$ satisfying \eqref{CGO11} for $j=2$.\end{prop}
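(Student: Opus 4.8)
The plan is to reduce each statement to a solvability result for the remainder term $w_j$ obtained after conjugating $\Box+q_j$ by the linear weight, and then to produce $w_j$ by duality from a Carleman estimate. I focus on Proposition \ref{p4}, Proposition \ref{p5} being treated symmetrically with the growing weight $e^{\lambda(t+x\cdot\omega)}$ and amplitude $1$ in place of $e^{-i(t,x)\cdot\xi}$. Writing $\varphi(t,x)=t+x\cdot\omega$ and $a(t,x)=e^{-i(t,x)\cdot\xi}$, a direct computation using $\abs{\omega}=1$ and the fact that $\varphi$ is an affine null phase for $\Box$ gives
$$e^{\lambda\varphi}(\Box+q_1)\bigl(e^{-\lambda\varphi}v\bigr)=\Box v-2\lambda(\partial_t-\omega\cdot\nabla_x)v+q_1v=:(P_\lambda+q_1)v,$$
the $\lambda^2$ terms cancelling exactly. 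Since $\xi\cdot(1,-\omega)=0$, one has $(\partial_t-\omega\cdot\nabla_x)a=-i\,\bigl(\xi\cdot(1,-\omega)\bigr)\,a=0$, so requiring that $u_1$ of the form \eqref{GO1} solve $\Box u_1+q_1u_1=0$ is equivalent to
$$(P_\lambda+q_1)w_1=-(\Box+q_1)a=:f,$$
where $\Box a$ is a fixed bounded function and $q_1a\in L^p(Q)\subset L^2(Q)$, so that $\norm{f}_{L^2(Q)}$ is bounded uniformly in $\lambda$.

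First I would establish a Carleman estimate for the conjugated wave operator $P_\lambda$ with the linear weight $\varphi$. Its formal adjoint is $P_\lambda^{*}=\Box+2\lambda(\partial_t-\omega\cdot\nabla_x)$, and the aim is an inequality that controls a $\lambda$-weighted $H^1$-norm of $z$ by $P_\lambda^{*}z$ measured in a suitable norm, with a gain of one power of $\lambda$ in $L^2$; the relevant boundary terms may be discarded here since \eqref{GO1} imposes no boundary condition on $u_1$. The estimate is obtained from the positive-commutator/energy identity attached to the first-order factor $\partial_t-\omega\cdot\nabla_x$. Because the admissible potentials are only in $L^p$, the decisive point is to upgrade this into a Carleman estimate in which the source is measured in a $\lambda$-dependent negative-order Sobolev norm, so that the solution operator gains an extra power of $\lambda$. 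This gain is exactly what lets the term $q_1w_1$ be treated as a genuine perturbation: by Hölder together with the Sobolev embedding $H^1(Q)\hookrightarrow L^{\frac{2p}{p-2}}(Q)$, valid precisely because $p>n+1=\dim Q$, multiplication by $q_1\in L^p(Q)$ is a bounded map $H^1(Q)\to L^2(Q)$, and composing it with the negative-norm solution operator becomes a contraction for $\lambda$ large.

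Next I would convert the Carleman estimate into the desired solvability statement. Applying the a priori bound to $P_\lambda^{*}+q_1$ and invoking the Hahn--Banach theorem together with the Riesz representation theorem on the range of the adjoint, one obtains, for $\lambda$ larger than some $\lambda_1$, a solution $w_1\in H^1(Q)$ of $(P_\lambda+q_1)w_1=f$ satisfying $\norm{w_1}_{H^1(Q)}+\lambda\norm{w_1}_{L^2(Q)}\leq C\norm{f}$ with $C$ independent of $\lambda$, which is exactly \eqref{CGO11}; undoing the conjugation then yields $u_1=e^{-\lambda\varphi}(a+w_1)\in H^1(Q)$ solving $\Box u_1+q_1u_1=0$. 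Proposition \ref{p5} follows verbatim with $P_\lambda$ replaced by $\widetilde P_\lambda=\Box+2\lambda(\partial_t-\omega\cdot\nabla_x)$, amplitude $1$, and source $-q_2\in L^p(Q)\subset L^2(Q)$, giving the form \eqref{GO2} with \eqref{CGO11}. The hard part is the second step: designing the negative-order Carleman estimate so that the loss created by the unbounded potential $q_j$ is compensated by the $\lambda$-gain while keeping all constants uniform in $\lambda$. Once this estimate is in hand, the construction of $w_j$ and the verification of \eqref{CGO11} are routine.
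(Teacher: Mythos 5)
Your overall route coincides with the paper's: conjugate by the linear weight to reduce Proposition \ref{p5} to solving $(P_{\omega,\lambda}+q_2)w_2=-q_2$ (amplitude $1$, so the source is just $-q_2\in L^p(Q)\subset L^2(Q)$, since $\Box e^{\lambda(t+x\cdot\omega)}=0$), prove a Carleman estimate in the $\lambda$-dependent negative-order norm $H^{-1}_\lambda(\R^{1+n})$ for the adjoint operator $P_{\omega,-\lambda}+q_2$ (the paper's Lemma \ref{l3}, obtained from \cite[Lemma 5.4]{Ki4} perturbed exactly as in Lemma \ref{l2}), and then produce $w_2\in H^1_\lambda(\R^{1+n})$ by Hahn--Banach duality on the range of the adjoint, the $H^1_\lambda$ bound $\norm{w_2}_{H^1_\lambda}\leq C\norm{q_2}_{L^2(Q)}$ being precisely \eqref{CGO11}. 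The reduction, the role of the null phase $(\partial_t-\omega\cdot\nabla_x)a=0$, the use of compactly supported test functions (which is why no boundary terms arise), and the duality step are all as in the paper.

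The one place where your justification does not work as written is the absorption of the unbounded potential. You assert that multiplication by $q\in L^p(Q)$ is bounded from $H^1(Q)$ to $L^2(Q)$ (via $H^1\hookrightarrow L^{\frac{2p}{p-2}}$) and that composing it with the negative-norm solution operator ``becomes a contraction for $\lambda$ large''. It does not: the solution operator gains a power of $\lambda$ only from $L^2$ source to $L^2$ solution; from $L^2$ source to the $H^1$ norm of the solution it gains nothing, so the map $w\mapsto G_\lambda(qw)$ has operator norm of size $\norm{q}_{L^p(Q)}$ on $H^1$, uniformly bounded but not small in $\lambda$. The paper's Lemma \ref{l2} avoids this by absorbing $q$ \emph{before} dualizing, at the level of the estimate itself: $\norm{qv}_{H^{-1}_\lambda}\leq \lambda^{r-1}\norm{qv}_{H^{-r}_\lambda}\leq C\lambda^{r-1}\norm{q}_{L^{p}(Q)}\norm{v}_{L^2(Q)}$ with $r=\frac{n+1}{p}<1$, which requires only $\norm{v}_{L^2}$ on the right (all the Carleman estimate controls) and tends to $0$ as $\lambda\to\infty$. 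Equivalently, in your fixed-point language one must use the subcritical multiplication bound $q:H^{s}\to L^2$ with $s=\frac{n+1}{p}$ together with $\norm{w}_{H^{s}}\leq\lambda^{s-1}\norm{w}_{H^1_\lambda}$, not the plain $H^1\to L^2$ bound. Note that this is also where the strict inequality $p>n+1$ is genuinely used: it yields $r<1$ and hence the $\lambda$-gain, whereas the embedding $H^1\hookrightarrow L^{\frac{2p}{p-2}}$ alone already holds at the endpoint $p=n+1$ and produces no smallness. With this correction, the rest of your sketch --- including the symmetric treatment of the growing weight, for which the dual Carleman estimate involves $P_{\omega,-\lambda}$ as in Lemma \ref{l3} --- matches the paper's proof of Proposition \ref{p5}.
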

We start by considering Proposition \ref{p4}. To build  solutions $u_1\in H^1(Q)$ of the form \eqref{GO1},  we first recall some preliminary tools and a suitable Carleman estimate
in Sobolev space of negative order borrowed from \cite{Ki4}. For all $m\in\R$, we introduce the space $H^m_\lambda(\R^{1+n})$ defined by
\[H^m_\lambda(\R^{1+n})=\{u\in\mathcal S'(\R^{1+n}):\ (|(\tau,\xi)|^2+\lambda^2)^{m\over 2}\hat{u}\in L^2(\R^{1+n})\},\]
with the norm
\[\norm{u}_{H^m_\lambda(\R^{1+n})}^2=\int_\R\int_{\R^n}(|(\tau,\xi)|^2+\lambda^2)^{m}|\hat{u}(\tau,\xi)|^2 d\xi d\tau.\]
Note that here we consider these spaces with $\lambda>1$ and, for $\lambda=1$, one can check that $H^m_\lambda(\R^{1+n})=H^m(\R^{1+n})$.
Here for all tempered distribution $u\in \mathcal S'(\R^{1+n})$, we denote by $\hat{u}$ the Fourier transform of $u$.
We fix the weighted operator
$$P_{\omega,\pm\lambda}:=e^{\mp \lambda(t+x\cdot\omega)}\Box e^{\pm \lambda(t+x\cdot\omega)}=\Box \pm2\lambda(\pd_t-\omega\cdot\nabla_x) $$
and we recall the following Carleman estimate
\begin{lem}\label{l1} \emph{(Lemma 5.1, \cite{Ki4})} There exists $\lambda_1'>1$ such that
\bel{l1a}\norm{v}_{L^2(\R^{1+n})}\leq C\norm{P_{\omega,\lambda}v}_{H^{-1}_\lambda(\R^{1+n})},\quad v\in\mathcal C^\infty_0(Q),\ \ \lambda>\lambda_1',\ee
with $C>0$ independent of $v$ and $\lambda$.
\end{lem}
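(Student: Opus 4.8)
The plan is to reduce \eqref{l1a} to a Fourier--multiplier inequality and then to locate and resolve the single direction of degeneracy. As recorded in the excerpt, after conjugation $P_{\omega,\lambda}=\Box+2\lambda(\pd_t-\omega\cdot\nabla_x)$ is a constant coefficient operator, so for $v\in\mathcal C^\infty_0(Q)$ Plancherel's theorem reduces the claim to an estimate for its full symbol. With dual variables $(\tau,\zeta)$ to $(t,x)$ and
\[p_\lambda(\tau,\zeta)=\left(|\zeta|^2-\tau^2\right)+2i\lambda\left(\tau-\omega\cdot\zeta\right),\qquad \mu^2:=|(\tau,\zeta)|^2+\lambda^2,\]
the estimate \eqref{l1a} is equivalent to
\[\int_{\R^{1+n}}|\hat v|^2\,d\tau\,d\zeta\leq C\int_{\R^{1+n}}\frac{|p_\lambda|^2}{\mu^2}\,|\hat v|^2\,d\tau\,d\zeta .\]

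The real part $|\zeta|^2-\tau^2$ of $p_\lambda$ vanishes on the light cone and the imaginary part $2\lambda(\tau-\omega\cdot\zeta)$ on the hyperplane $\tau=\omega\cdot\zeta$; the two vanish simultaneously only on the line $\Gamma=\R(1,\omega)$, that is, exactly where the linear weight $t+x\cdot\omega$ is null for $\Box$. Off any fixed conic neighbourhood of $\Gamma$ one checks directly that $|p_\lambda|^2\geq c\,\mu^2$ (using $\lambda>1$), so there $|p_\lambda|^2/\mu^2$ is bounded below and the corresponding part of the integral is controlled by the right-hand side; note that the opposite null direction $\tau=-\omega\cdot\zeta$ lies in this elliptic region. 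Hence the whole difficulty, and the reason one is forced to measure $P_{\omega,\lambda}v$ in the weaker norm $H^{-1}_\lambda$ rather than in $L^2$, is concentrated on a conic neighbourhood of $\Gamma$.

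On that neighbourhood I would lift the degeneracy using the compact support of $v$. Writing $\sigma=\tau-\omega\cdot\zeta$, $\rho=\tau+\omega\cdot\zeta$ and letting $\zeta'$ be the component of $\zeta$ orthogonal to $\omega$, the real part becomes $|\zeta'|^2-\sigma\rho$, and on the high-frequency part of the cone $\rho^2\sim\mu^2$ is large. The key point is that compact support of $v$ in the direction $(1,-\omega)$ — which is transverse to $\Gamma$ and dual to $\sigma$ — yields, for fixed $(\rho,\zeta')$, a one-dimensional Poincaré inequality giving $\int(|\zeta'|^2-\sigma\rho)^2|\hat v|^2\,d\sigma\geq c_1\,\rho^2\int|\hat v|^2\,d\sigma$ whenever $|\zeta'|^4\leq c_0\,\rho^2$; dividing by $\mu^2\sim\rho^2$ and integrating in $(\rho,\zeta')$ recovers $\int|\hat v|^2$ from $\int(|p_\lambda|^2/\mu^2)|\hat v|^2$ on this part of the cone. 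The remaining part, where $|\zeta'|^2$ is comparatively large, is treated the same way using the compact support in the spatial directions orthogonal to $\omega$ (a Poincaré inequality for $-\Delta_{x'}$, dual to $\zeta'$) together with the lower bound $|p_\lambda|^2\geq 4\lambda^2|\zeta'|^4/(\rho^2+4\lambda^2)$, while the low-frequency part $\mu\sim\lambda$ is handled directly by the imaginary part $4\lambda^2\sigma^2$ and the same $\sigma$-Poincaré.

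The step I expect to be the main obstacle is this last one: neither Poincaré inequality is useful globally (along $\tau=-\omega\cdot\zeta$ the symbol is already elliptic, yet $\norm{(\pd_t-\omega\cdot\nabla_x)v}_{L^2}$ is large at high frequency), and a sharp frequency cut-off isolating $\Gamma$ would destroy precisely the physical-space support on which the Poincaré estimates rely. Organising the partition into the elliptic region and the overlapping characteristic subregions so that all constants stay independent of $\lambda$, and matching the powers of $\rho\sim\mu$ against the single derivative lost in the $H^{-1}_\lambda$ norm, is the technical heart of the argument. This is exactly the content of \cite[Lemma~5.1]{Ki4}, whose argument I would reproduce; since the statement is quoted verbatim from that reference, it suffices here to invoke it.
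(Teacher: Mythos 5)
The paper offers no proof of this lemma at all: it is quoted verbatim, citation included, from \cite[Lemma 5.1]{Ki4}, so your concluding move --- invoking that reference --- coincides exactly with what the paper does, and your Fourier-side sketch (reduction via Plancherel to the weighted inequality with symbol $p_\lambda$, degeneracy along $\Gamma=\R(1,\omega)$, Poincar\'e in the variable dual to $\sigma=\tau-\omega\cdot\zeta$ using the compact support in $Q$) is broadly the right architecture for proving it. Two caveats on the sketch: the claimed uniform bound $|p_\lambda|^2\geq c\,\mu^2$ off a fixed conic neighbourhood of $\Gamma$ fails at frequencies $|(\tau,\zeta)|\lesssim\lambda$ on the hyperplane $\tau=\omega\cdot\zeta$ (e.g.\ at $(\tau,\zeta)=(0,\zeta')$ with $\zeta'\perp\omega$, $|\zeta'|=1$, one has $|p_\lambda|^2/\mu^2\sim\lambda^{-2}$), so that region is \emph{not} elliptic and must be absorbed into your low-frequency case --- which your treatment via $4\lambda^2\sigma^2$ and the $\sigma$-Poincar\'e in effect does --- and the step ``dividing by $\mu^2\sim\rho^2$'' is not legitimate as stated, since $\mu^2$ grows like $\sigma^2$ along the full $\sigma$-line over which the Poincar\'e inequality must be integrated; one needs the decomposition to be taken only in the transverse variables $(\rho,\zeta')$ together with a lower bound of the form $c\min\bigl((|\zeta'|^2-\sigma\rho)^2/\mu_0^2,1\bigr)$ and a measure/concentration argument, which is precisely the unexecuted ``technical heart'' you flag before deferring to \cite{Ki4}.
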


From this result we can deduce the Carleman estimate
\begin{lem}\label{l2} Let $p_1\in (n+1,+\infty)$, $p_2\in(n,+\infty)$ and $q\in L^{p_1}(Q)\cup L^\infty(0,T;L^{p_2}(\Omega))$. Then, there exists $\lambda_1''>\lambda_1'$ such that
\bel{l2a}\norm{v}_{L^2(\R^{1+n})}\leq C\norm{P_{\omega,\lambda}v+qv}_{H^{-1}_\lambda(\R^{1+n})},\quad v\in\mathcal C^\infty_0(Q),\ \ \lambda>\lambda_1'',\ee
with $C>0$ independent of $v$ and $\lambda$.
\end{lem}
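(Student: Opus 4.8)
The plan is to obtain \eqref{l2a} from the potential-free Carleman estimate \eqref{l1a} of \lemref{l1} by treating $qv$ as a perturbation that is absorbed for $\lambda$ large. Writing $P_{\omega,\lambda}v=(P_{\omega,\lambda}v+qv)-qv$ and applying \lemref{l1}, one has, for every $\lambda>\lambda_1'$ and $v\in\mathcal C^\infty_0(Q)$,
\[\norm{v}_{L^2(\R^{1+n})}\leq C\norm{P_{\omega,\lambda}v}_{H^{-1}_\lambda(\R^{1+n})}\leq C\norm{P_{\omega,\lambda}v+qv}_{H^{-1}_\lambda(\R^{1+n})}+C\norm{qv}_{H^{-1}_\lambda(\R^{1+n})}.\]
Thus the whole matter reduces to establishing a gain in $\lambda$ of the form
\[\norm{qv}_{H^{-1}_\lambda(\R^{1+n})}\leq C_0\lambda^{-\delta}\norm{v}_{L^2(\R^{1+n})},\qquad v\in\mathcal C^\infty_0(Q),\]
for some $\delta>0$ independent of $\lambda$. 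Granting this, it suffices to fix $\lambda_1''>\lambda_1'$ so large that $CC_0\lambda^{-\delta}\leq\tfrac12$ for $\lambda>\lambda_1''$; the last term is then absorbed into the left-hand side and \eqref{l2a} holds with constant $2C$.

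The estimate on $\norm{qv}_{H^{-1}_\lambda}$ is carried out by duality. Since $H^{-1}_\lambda(\R^{1+n})$ is the dual of $H^1_\lambda(\R^{1+n})$ for the $L^2$ pairing (by Plancherel and the definition of the weighted norms),
\[\norm{qv}_{H^{-1}_\lambda(\R^{1+n})}=\sup_{0\neq\phi\in H^1_\lambda(\R^{1+n})}\frac{\abs{\int_Q qv\phi\,dx\,dt}}{\norm{\phi}_{H^1_\lambda(\R^{1+n})}},\]
where the integral is over $Q$ because $v$ is supported there; in particular the bound below shows $qv\in H^{-1}_\lambda$. The key point is that the strict inequalities $p_1>n+1$ and $p_2>n$ place the exponent dual to $q$ and $v$ \emph{strictly below} the critical Sobolev exponent, which is exactly what produces a negative power of $\lambda$. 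In the case $q\in L^{p_1}(Q)$, the spacetime dimension is $n+1\geq 3$; H\"older's inequality with $\frac{1}{p_1}+\frac12+\frac1b=1$, i.e. $b=\frac{2p_1}{p_1-2}$, gives $\int_Q\abs{qv\phi}\leq\norm{q}_{L^{p_1}(Q)}\norm{v}_{L^2(\R^{1+n})}\norm{\phi}_{L^{b}(\R^{1+n})}$, and one checks $b<\frac{2(n+1)}{n-1}=:2^*$ precisely when $p_1>n+1$. Interpolating between $\norm{\phi}_{L^2}\leq\lambda^{-1}\norm{\phi}_{H^1_\lambda}$ and $\norm{\phi}_{L^{2^*}}\leq C\norm{\nabla_{t,x}\phi}_{L^2}\leq C\norm{\phi}_{H^1_\lambda}$ yields $\norm{\phi}_{L^{b}}\leq C\lambda^{-\theta}\norm{\phi}_{H^1_\lambda}$ with $\theta=\frac{p_1-n-1}{p_1}>0$, giving the desired gain with $\delta=\theta$.

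For the anisotropic case $q\in L^{\infty}(0,T;L^{p_2}(\Omega))$ one separates the variables. For a.e.\ $t$, spatial H\"older with $b=\frac{2p_2}{p_2-2}$ gives $\int_\Omega\abs{q(t)v(t)\phi(t)}\leq\norm{q(t)}_{L^{p_2}(\Omega)}\norm{v(t)}_{L^2(\Omega)}\norm{\phi(t)}_{L^{b}(\Omega)}$, and integrating in $t$ together with the Cauchy--Schwarz inequality produces $\int_Q\abs{qv\phi}\leq\norm{q}_{L^\infty(0,T;L^{p_2}(\Omega))}\norm{v}_{L^2(\R^{1+n})}\norm{\phi}_{L^2(0,T;L^{b}(\Omega))}$. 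It remains to bound the mixed norm: interpolating the spatial norm pointwise in $t$ between $L^2(\Omega)$ and the critical exponent $\frac{2n}{n-2}$ (for $n\geq 3$) and applying H\"older in $t$ gives $\norm{\phi}_{L^2_tL^{b}_x}\leq\norm{\phi}_{L^2}^{\theta}\norm{\phi}_{L^2_tL^{2n/(n-2)}_x}^{1-\theta}$ with $\theta=\frac{p_2-n}{p_2}>0$; then $\norm{\phi}_{L^2}\leq\lambda^{-1}\norm{\phi}_{H^1_\lambda}$ and $\norm{\phi}_{L^2_tL^{2n/(n-2)}_x}\leq C\norm{\nabla_x\phi}_{L^2}\leq C\norm{\phi}_{H^1_\lambda}$ again give $\norm{\phi}_{L^2_tL^{b}_x}\leq C\lambda^{-\theta}\norm{\phi}_{H^1_\lambda}$. (When $n=2$ the critical spatial exponent is unavailable, but $p_2>2$ forces $b<\infty$, so one simply replaces $\frac{2n}{n-2}$ by any finite exponent above $b$, still gaining a positive power of $\lambda$.)

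The main obstacle is precisely this second step: producing the strictly negative power $\lambda^{-\delta}$ in the bound for $\norm{qv}_{H^{-1}_\lambda}$, which is where the hypotheses $p_1>n+1$ and $p_2>n$ are essential, since they guarantee a subcritical target Lebesgue exponent and hence $\theta>0$ in the interpolation. Everything else—the triangle inequality, the duality characterization of the negative-order norm, and the final absorption—is routine once this parameter-dependent embedding is in hand.
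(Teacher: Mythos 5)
Your proof is correct and takes essentially the same route as the paper: both treat $qv$ perturbatively on top of the free Carleman estimate of \lemref{l1} and absorb it for large $\lambda$ via a bound $\norm{qv}_{H^{-1}_\lambda(\R^{1+n})}\leq C\lambda^{-\delta}\norm{v}_{L^2}$, with exactly the same exponents $\delta=\frac{p_1-n-1}{p_1}$, resp.\ $\delta=\frac{p_2-n}{p_2}$. The only difference is cosmetic: you prove the perturbation bound on the dual side, interpolating the test function between $\norm{\phi}_{L^2}\leq\lambda^{-1}\norm{\phi}_{H^1_\lambda}$ and the critical Sobolev embedding, whereas the paper works on the primal side, writing $\norm{qv}_{H^{-1}_\lambda}\leq\lambda^{r-1}\norm{qv}_{H^{-r}_\lambda}\leq\lambda^{r-1}\norm{qv}_{H^{-r}}$ with $r=\frac{n+1}{p_1}$ (resp.\ $r=\frac{n}{p_2}$, slice-wise in $t$ after the reduction $\norm{\cdot}_{H^{-1}_\lambda(\R^{1+n})}\leq\norm{\cdot}_{L^2(0,T;H^{-1}_\lambda(\R^n))}$) and invoking the dual Sobolev embedding together with H\"older --- two equivalent formulations of the same estimate.
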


\begin{proof}
We start by considering the case $q\in L^{p_1}(Q)$.
Note first that
\bel{l2b}\norm{P_{\omega,\lambda}v+qv}_{H^{-1}_\lambda(\R^{1+n})}\geq \norm{P_{\omega,\lambda}v}_{H^{-1}_\lambda(\R^{1+n})}-\norm{qv}_{H^{-1}_\lambda(\R^{1+n})}.\ee
On the other hand, fixing $$ r=\frac{n+1}{p_1},\quad{1\over p_3}={1\over 2}-{1\over p_1}={n+1-2r\over 2(n+1)},$$ by the Sobolev embedding theorem we deduce that
$$\begin{aligned}\norm{qv}_{H^{-1}_\lambda(\R^{1+n})}&\leq \lambda^{r-1}\norm{qv}_{H^{-r}_\lambda(\R^{n+1})}\\
\ &\leq \lambda^{r-1}\norm{qv}_{H^{-r}(\R^{n+1})}\\
\ &\leq C\lambda^{r-1}\norm{qv}_{L^{\frac{p_3}{p_3-1}}(Q)}\end{aligned}$$
Combining this with the fact that
$${p_3-1\over p_3}=1-{1\over p_3}={1\over 2}+{1\over p_1},$$
we deduce from the H\"older inequality that
$$\norm{qv}_{H^{-1}_\lambda(\R^{1+n})}\leq C\lambda^{r-1}\norm{q}_{L^{p_1}(Q)}\norm{v}_{L^2(Q)}.$$
Thus, applying \eqref{l1a} and \eqref{l2b}, we deduce \eqref{l2a} for $\lambda>1$ sufficiently large.
Now let us consider the case $q\in  L^\infty(0,T;L^{p_2}(\Omega))$. Note first that
$$\norm{qv}_{H^{-1}_\lambda(\R^{1+n})}\leq \norm{qv}_{L^2(0,T;H^{-1}_\lambda(\R^{n}))}.$$
Therefore, by repeating the above arguments, we obtain
$$\norm{qv}_{H^{-1}_\lambda(\R^{1+n})}\leq C\lambda^{\frac{n}{p_2}-1}\norm{q}_{L^\infty(0,T;L^{p_2}(\Omega))}\norm{v}_{L^2(Q)}$$
which implies  \eqref{l2a} for $\lambda>1$ sufficiently large. Combining these two results, one can find $\lambda_1''>\lambda_1'$ such that \eqref{l2a} is fulfilled.\end{proof}
Using this new carleman estimate we are now in position to complete the proof of Proposition \ref{p4}.

\textbf{Proof of Proposition \ref{p4}.} Note first that
$$\begin{aligned} \Box (e^{-\lambda (t+x\cdot\omega)}e^{-i\xi\cdot (t,x)})&=[2i\lambda (1,-\omega)\cdot\xi e^{-i\xi\cdot (t,x)}+\Box e^{-i\xi\cdot (t,x)}]e^{-\lambda (t+x\cdot\omega)}\\
\ &=[\Box e^{-i\xi\cdot (t,x)}]e^{-\lambda (t+x\cdot\omega)},\end{aligned}$$
$$\Box (e^{-\lambda (t+x\cdot\omega)}w_1)=e^{-\lambda (t+x\cdot\omega)}P_{\omega,-\lambda}w_1.$$
Therefore, we need to consider $w_1\in H^1(Q)$ a solution of
\bel{p4b} P_{\omega,-\lambda}w_1+q_1w_1=-e^{\lambda (t+x\cdot\omega)}(\Box+q_1) (e^{-\lambda (t+x\cdot\omega)}e^{-i\xi\cdot (t,x)})=-(\Box+q_1) e^{-i\xi\cdot (t,x)}=F\ee
and satisfying \eqref{CGO11} for $j=1$. For this purpose, we will use estimate \eqref{l2a}. From now on, we fix $\lambda_1=\lambda_1''$.
Applying the Carleman estimate \eqref{l2a}, we define the linear form $\mathcal L$ on $\{P_{\omega,\lambda}z+q_1z:\ z\in\mathcal C^\infty_0(Q)\}$, considered as a subspace of $H^{-1}_\lambda(\R^{1+n})$ by
\[\mathcal L(P_{\omega,\lambda}z+q_1z)=\int_QzFdxdt,\quad z\in\mathcal C^\infty_0(Q).\]
Then, \eqref{l2a} implies
\[|\mathcal L(P_{\omega,\lambda}z+q_1z)|\leq C\norm{F}_{L^2(Q)}\norm{P_{\omega,\lambda}z+q_1z}_{H^{-1}_\lambda(\R^{1+n})},\quad z\in\mathcal C^\infty_0(Q).\]
Thus, by the Hahn Banach theorem we can extend $\mathcal L$ to a continuous linear form on $H^{-1}_\lambda(\R^{1+n})$ still denoted by $\mathcal L$ and satisfying $\norm{\mathcal L}\leq C\norm{F}_{L^2(Q)}$. Therefore, there exists $w_1\in H^{1}_\lambda(\R^{1+n})$ such that
\[\left\langle h,w_1\right\rangle_{H^{-1}_\lambda(\R^{1+n}),H^{1}_\lambda(\R^{1+n})}=\mathcal L(h),\quad h\in H^{-1}_\lambda(\R^{1+n}).\]
Choosing $h=P_{\omega,\lambda}z+q_1z$ with $z\in \mathcal C^\infty_0(Q)$ proves that $w_1$ satisfies $P_{\omega,-\lambda}w_1+q_1w_1=F$ in $Q$.
Moreover, we have $\norm{w_1}_{H^{1}_\lambda(\R^{1+n})}\leq \norm{\mathcal L}\leq C\norm{F}_{L^2(Q)}$. This proves that $w_1$ fufills \eqref{CGO11} which completes the proof of the proposition.\qed

Now let us consider the construction of the exponentially growing solutions given by Proposition \ref{p5}.  Combining \cite[Lemma 5.4]{Ki4} with the arguments used in Lemma \ref{l2} we obtain the Carleman estimate.

\begin{lem}\label{l3} There exists $\lambda_2'>0$ such that for $\lambda>\lambda_2'$,  we have
\bel{l3a}\norm{v}_{L^2(\R^{1+n})}\leq C\norm{P_{\omega,-\lambda}v+q_2v}_{H^{-1}_\lambda(\R^{1+n})},\quad v\in\mathcal C^\infty_0(Q),\ \ \lambda>\lambda_3',\ee
with $C>0$ independent of $v$ and $\lambda$.\end{lem}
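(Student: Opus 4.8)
The plan is to follow verbatim the argument of Lemma \ref{l2}, changing only the free Carleman estimate that we perturb. Where Lemma \ref{l2} started from the estimate \eqref{l1a} for $P_{\omega,\lambda}$ (Lemma \ref{l1}, i.e. \cite[Lemma 5.1]{Ki4}), here I would start from the analogous free estimate for the conjugate operator $P_{\omega,-\lambda}=\Box-2\lambda(\pd_t-\omega\cdot\nabla_x)$ supplied by \cite[Lemma 5.4]{Ki4}. That result yields some $\lambda_2'>0$ and a constant $C>0$, independent of $v$ and $\lambda$, with
\[\norm{v}_{L^2(\R^{1+n})}\leq C\norm{P_{\omega,-\lambda}v}_{H^{-1}_\lambda(\R^{1+n})},\quad v\in\mathcal C^\infty_0(Q),\ \lambda>\lambda_2'.\]
Since $P_{\omega,\lambda}$ and $P_{\omega,-\lambda}$ differ only through the sign of their first order part, the symbol estimate underlying this inequality is symmetric to the one behind \eqref{l1a}, so nothing new occurs at this stage.

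Next I would split off the potential by the triangle inequality in $H^{-1}_\lambda(\R^{1+n})$,
\[\norm{P_{\omega,-\lambda}v+q_2v}_{H^{-1}_\lambda(\R^{1+n})}\geq \norm{P_{\omega,-\lambda}v}_{H^{-1}_\lambda(\R^{1+n})}-\norm{q_2v}_{H^{-1}_\lambda(\R^{1+n})},\]
and estimate the perturbation exactly as in the proof of Lemma \ref{l2}. With $q_2\in L^p(Q)$, $p>n+1$, I set $r=\frac{n+1}{p}\in(0,1)$ and $\frac{1}{p_3}=\frac12-\frac1p$; the inequalities $\norm{q_2v}_{H^{-1}_\lambda}\leq\lambda^{r-1}\norm{q_2v}_{H^{-r}_\lambda}\leq\lambda^{r-1}\norm{q_2v}_{H^{-r}(\R^{1+n})}$, the dual Sobolev embedding $L^{p_3/(p_3-1)}(\R^{1+n})\hookrightarrow H^{-r}(\R^{1+n})$, and Hölder's inequality (using $\frac{p_3-1}{p_3}=\frac12+\frac1p$) together give
\[\norm{q_2v}_{H^{-1}_\lambda(\R^{1+n})}\leq C\lambda^{r-1}\norm{q_2}_{L^p(Q)}\norm{v}_{L^2(Q)}.\]

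Combining the three displays yields
\[\norm{v}_{L^2(\R^{1+n})}\leq C\norm{P_{\omega,-\lambda}v+q_2v}_{H^{-1}_\lambda(\R^{1+n})}+C\lambda^{r-1}\norm{q_2}_{L^p(Q)}\norm{v}_{L^2(\R^{1+n})}.\]
Because $r-1<0$, the last term tends to zero relative to the left-hand side and can be absorbed once $\lambda$ exceeds some threshold (possibly enlarging $\lambda_2'$), producing \eqref{l3a} with a constant independent of $v$ and $\lambda$.

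I do not anticipate a genuine obstacle: the decisive mechanism, namely the gain of the negative power $\lambda^{r-1}$ when the product $q_2v$ is measured in the $\lambda$-dependent negative norm $H^{-1}_\lambda$, which is precisely what lets an unbounded potential be handled as a small perturbation, is already isolated in Lemma \ref{l2}. The only points deserving minor care are (i) checking that \cite[Lemma 5.4]{Ki4} is stated in the same functional framework (the space $H^{-1}_\lambda(\R^{1+n})$ and the test class $\mathcal C^\infty_0(Q)$), so that the perturbation bound transfers without modification, and (ii) reconciling the threshold notation, since the displayed condition in \eqref{l3a} should read $\lambda>\lambda_2'$ rather than the stray $\lambda_3'$.
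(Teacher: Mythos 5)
Your proposal matches the paper's proof exactly: the paper dispatches Lemma \ref{l3} in one line by combining \cite[Lemma 5.4]{Ki4} (the free Carleman estimate for $P_{\omega,-\lambda}$) with the perturbation argument of Lemma \ref{l2}, which is precisely the route you take, with the absorption of the $C\lambda^{r-1}\norm{q_2}_{L^p(Q)}\norm{v}_{L^2}$ term spelled out. Your remark (ii) is also apt, since the mismatch between $\lambda_2'$ and $\lambda_3'$ in the statement is indeed a typo in the paper.
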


In a similar way to Proposition \ref{p4}, we can complete the proof of Proposition \ref{p5} by applying estimate \eqref{l3a}.

\subsection{Carleman estimates for unbounded potential}

This subsection is devoted to the proof of  a Carleman estimate similar  to  \cite[Theorem 3.1]{Ki4}. More precisely,  we consider the following estimate.

\begin{Thm}\label{c1}  Let $p_1\in(n+1,+\infty)$, $p_2\in (n,+\infty)$ and assume that $q\in L^{p_1}(Q)$ \emph{(}resp $q\in L^\infty(0,T;L^{p_2}(\Omega))$\emph{)} and  $u\in\mathcal C^2(\overline{Q})$.  If $u$ satisfies the condition
 \begin{equation}\label{c1a}u_{\vert \Sigma}=0,\quad u_{\vert t=0}=\partial_tu_{\vert t=0}=0,\end{equation}
then there exists $\lambda_3>\lambda_2$ depending only on  $\Omega$, $T$ and $M\geq \norm{q}_{L^{p_1}(Q)}$ \emph{(}resp  $M\geq \norm{q}_{L^\infty(0,T;L^{p_2}(\Omega))}$\emph{)} such that the estimate
\begin{equation}\label{c1b}\begin{array}{l}\lambda \int_\Omega e^{-2\lambda( T+\omega\cdot x)}\abs{\partial_tu(T,x)}^2dx+\lambda\int_{\Sigma_{+,\omega}}e^{-2\lambda(t+\omega\cdot x)}\abs{\partial_\nu u}^2\abs{\omega\cdot\nu(x) } d\sigma(x)dt\\
+\int_Qe^{-2\lambda(t+\omega\cdot x)}[\lambda^2\abs{u}^2+|\pd_tu|^2+|\nabla_xu|^2]dxdt\\
\leq C\left(\int_Qe^{-2\lambda(t+\omega\cdot x)}\abs{(\partial_t^2-\Delta_x+q)u}^2dxdt+ \lambda^3\int_\Omega e^{-2\lambda(T+\omega\cdot x)}\abs{u(T,x)}^2dx\right)\\
\ \ \ +C\left(\lambda\int_\Omega e^{-2\lambda(T+\omega\cdot x)}\abs{\nabla_xu(T,x)}^2dx+\lambda\int_{\Sigma_{-,\omega}}e^{-2\lambda(t+\omega\cdot x)}\abs{\partial_\nu u}^2\abs{\omega\cdot\nu(x) }d\sigma(x)dt\right)\end{array}\end{equation}
holds true for $\lambda\geq \lambda_3$  with $C$  depending only on  $\Omega$, $T$ and $M$.\end{Thm}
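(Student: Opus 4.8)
The plan is to derive \eqref{c1b} by perturbation from its potential-free counterpart. I write $\varphi(t,x)=t+\omega\cdot x$ for the linear Carleman weight and set $w=e^{-\lambda\varphi}u$, so that $P_{\omega,\lambda}w=e^{-\lambda\varphi}\Box u$. The first step is to invoke the version of \eqref{c1b} in which $q$ is replaced by $0$, i.e.\ the Carleman estimate for the bare operator $\Box=\partial_t^2-\Delta_x$ under the boundary conditions \eqref{c1a}; this is exactly \cite[Theorem 3.1]{Ki4}, obtained in the standard way by conjugating $\Box$ with $e^{-\lambda\varphi}$, integrating by parts over $Q$, and using \eqref{c1a} to drop the contributions carrying $u_{\vert t=0}$, $\partial_tu_{\vert t=0}$ and $u_{\vert\Sigma}$, the $\Sigma_{+,\omega}$ term landing on the left with a favourable sign. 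I treat the analytic core of this inequality (positivity of the weighted commutator producing the powers $\lambda,\lambda^2,\lambda^3$ and the boundary weights $\abs{\omega\cdot\nu}$) as given by that reference; note that the boundary terms at $t=T$ and on $\Sigma_{-,\omega}$ on the right of \eqref{c1b} do not involve $q$ and are carried over unchanged.

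Granting the $q=0$ estimate, the only new point is to reinstate the potential. I write $\Box u=(\partial_t^2-\Delta_x+q)u-qu$, so that by the triangle inequality
\begin{equation*}
\int_Qe^{-2\lambda\varphi}\abs{\Box u}^2\,dxdt\leq 2\int_Qe^{-2\lambda\varphi}\abs{(\partial_t^2-\Delta_x+q)u}^2\,dxdt+2\int_Qe^{-2\lambda\varphi}\abs{qu}^2\,dxdt .
\end{equation*}
The last term equals $2\norm{qw}_{L^2(Q)}^2$, and everything reduces to absorbing $\norm{qw}_{L^2(Q)}^2$ into the left-hand volume term of \eqref{c1b}. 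Expressing $e^{-\lambda\varphi}\partial_tu=\partial_tw+\lambda w$ and $e^{-\lambda\varphi}\nabla_xu=\nabla_xw+\lambda\omega w$, an integration by parts using \eqref{c1a} (the cross term integrates to a nonnegative $t=T$ boundary contribution) shows that this volume term controls $\lambda^2\norm{w}_{L^2(Q)}^2+\norm{\nabla_{t,x}w}_{L^2(Q)}^2$.

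The crux is to absorb the unbounded potential with a strictly negative power of $\lambda$. For $q\in L^{p_1}(Q)$ with $p_1>n+1$, H\"older's inequality gives $\norm{qw}_{L^2(Q)}\leq\norm{q}_{L^{p_1}(Q)}\norm{w}_{L^{s}(Q)}$ with $\frac1s=\frac12-\frac1{p_1}$, and the condition $p_1>n+1$ forces $s$ strictly below the critical Sobolev exponent $\frac{2(n+1)}{n-1}$ of $H^1$ in dimension $n+1$. Interpolating $L^s$ between $L^2$ and $L^{2(n+1)/(n-1)}$ and using $H^1(Q)\hookrightarrow L^{2(n+1)/(n-1)}(Q)$ yields $\norm{w}_{L^s(Q)}\leq C\norm{w}_{L^2(Q)}^\theta\norm{w}_{H^1(Q)}^{1-\theta}$ for some $\theta\in(0,1)$; replacing $\norm{w}_{L^2(Q)}$ by $\lambda^{-1}(\lambda\norm{w}_{L^2(Q)})$ produces $\norm{qw}_{L^2(Q)}^2\leq C M^2\lambda^{-2\theta}\big(\lambda^2\norm{w}_{L^2(Q)}^2+\norm{\nabla_{t,x}w}_{L^2(Q)}^2\big)$. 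For $q\in L^\infty(0,T;L^{p_2}(\Omega))$ with $p_2>n$ the same scheme runs slice-by-slice in $t$: H\"older in $x$ with $\frac1\sigma=\frac12-\frac1{p_2}$ puts $\sigma$ strictly below the Sobolev exponent $\frac{2n}{n-2}$ of $H^1(\Omega)$ (any finite exponent when $n=2$), interpolation in $x$ followed by Young's inequality applied to the pair $\big(\lambda\norm{w(t,\cdot)}_{L^2(\Omega)},\norm{w(t,\cdot)}_{H^1(\Omega)}\big)$ and integration in $t$ give $\norm{qw}_{L^2(Q)}^2\leq C M^2\lambda^{-2\eta}\big(\lambda^2\norm{w}_{L^2(Q)}^2+\norm{w}_{L^2(0,T;H^1(\Omega))}^2\big)$ with $\eta>0$. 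In either case the prefactor $CM^2\lambda^{-2\theta}$ (resp.\ $CM^2\lambda^{-2\eta}$) is made smaller than $\tfrac12$ by choosing $\lambda_3$ large, so the potential term is absorbed and \eqref{c1b} follows with $C$ depending only on $\Omega$, $T$ and $M$.

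The main obstacle is exactly this last absorption: for bounded $q$ one dominates $\abs{qu}^2$ by $\norm{q}_{L^\infty}^2\abs{u}^2$ and absorbs it directly into the $\lambda^2\abs{u}^2$ term, whereas here one must extract a genuine negative power of $\lambda$ from an $L^{p}$-unbounded potential. This is precisely what the strict inequalities $p_1>n+1$ and $p_2>n$ buy, since they place the H\"older conjugate exponent strictly inside the range of the Sobolev embedding and thereby make the interpolation exponent $\theta$ (resp.\ $\eta$) strictly positive; if one only had $p_1=n+1$ (resp.\ $p_2=n$) the gain would degenerate to $\lambda^0$ and absorption would fail. A minor bookkeeping point is that all integrations by parts are legitimate because $u\in\mathcal C^2(\overline{Q})$, and that \eqref{c1a} is what permits discarding the $t=0$ and $\Sigma$ boundary terms both in the underlying $q=0$ estimate and in the cross-term identity used above.
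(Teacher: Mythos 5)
Your proof is correct and takes essentially the same route as the paper: both reduce to the $q=0$ Carleman estimate of \cite[Theorem 3.1]{Ki4} and then absorb the potential term via H\"older's inequality, the Sobolev embedding and interpolation, extracting a negative power of $\lambda$ before choosing $\lambda_3$ large. Your Lebesgue-scale interpolation exponent satisfies $\theta=1-\frac{n+1}{p_1}$, so your gain $\lambda^{-\theta}$ coincides exactly with the paper's $\lambda^{s-1}$, $s=\frac{n+1}{p_1}$, which is obtained there by interpolating in the parameter-dependent norm $\norm{e^{-\lambda(t+\omega\cdot x)}u}_{H^s(Q)}$ instead of between $L^2$ and the critical Lebesgue exponent.
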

\begin{proof} Since the proof of this result is similar for $q\in L^{p_1}(Q)$ or $q\in L^\infty(0,T;L^{p_2}(\Omega))$, we assume without lost of generality that $q\in L^{p_1}(Q)$.
Note first that for $q=0$, \eqref{c1a} follows from \cite[Theorem 3.1]{Ki4}. On the other hand, we have
$$\norm{e^{-\lambda(t+\omega\cdot x)}\abs{(\partial_t^2-\Delta_x+q)u}}_{L^2(Q)}\geq \norm{e^{-\lambda(t+\omega\cdot x)}\abs{(\partial_t^2-\Delta_x)u}}_{L^2(Q)}-\norm{e^{-\lambda(t+\omega\cdot x)}qu}_{L^2(Q)}$$
and by the H\"older inequality we deduce that
$$\norm{e^{-\lambda(t+\omega\cdot x)}\abs{(\partial_t^2-\Delta_x+q)u}}_{L^2(Q)}\geq \norm{e^{-2\lambda(t+\omega\cdot x)}\abs{(\partial_t^2-\Delta_x)u}}_{L^2(Q)}-\norm{q}_{L^{p_1}(Q)}\norm{e^{-\lambda(t+\omega\cdot x)}u}_{L^{p_3}(Q)}$$
with $p_3=\frac{2p_1}{p_1-2}$. Now fix $s:=\frac{n+1}{p_1}\in(0,1)$ and notice that
$${1\over p_3}={n+1-2s\over 2(n+1)}.$$
Thus, by the Sobolev embedding theorem, we have
$$\norm{e^{-\lambda(t+\omega\cdot x)}u}_{L^{p_3}(Q)}\leq C\norm{e^{-\lambda(t+\omega\cdot x)}u}_{H^s(Q)}$$
and by interpolation we deduce that
$$\begin{aligned}\norm{e^{-\lambda(t+\omega\cdot x)}u}_{L^{p_3}(Q)}&\leq C\left(\int_Q e^{-2\lambda(t+\omega\cdot x)}(\lambda^2 |u|^2+|\partial_tu|^2+|\nabla_x u|^2dxdt\right)^{\frac{s}{2}}\norm{e^{-\lambda(t+\omega\cdot x)}u}_{L^2(Q)}^{1-s}\\
&\leq C\lambda^{s-1} \left(\int_Q e^{-2\lambda(t+\omega\cdot x)}(\lambda^2 |u|^2+|\partial_tu|^2+|\nabla_x u|^2dxdt\right)^{\frac{1}{2}}.\end{aligned}$$
On the other hand, in view of \cite[Theorem 3.1]{Ki4}, there exists $\lambda_3'>1$ such that, for $\lambda>\lambda_3'$, we have
$$\begin{array}{l}\int_Qe^{-2\lambda(t+\omega\cdot x)}(\lambda^2 |u|^2+|\partial_tu|^2+|\nabla_x u|^2)dxdt\\
\leq C\left(\int_Qe^{-2\lambda(t+\omega\cdot x)}\abs{(\partial_t^2-\Delta_x)u}^2dxdt+ \lambda^3\int_\Omega e^{-2\lambda(T+\omega\cdot x)}\abs{u(T,x)}^2dx\right)\\
\ \ \ +C\left(\lambda\int_\Omega e^{-2\lambda(T+\omega\cdot x)}\abs{\nabla_xu(T,x)}^2dx+\lambda\int_{\Sigma_{-,\omega}}e^{-2\lambda(t+\omega\cdot x)}\abs{\partial_\nu u}^2\abs{\omega\cdot\nu(x) }d\sigma(x)dt\right).\end{array}$$
Thus, we get
$$\begin{array}{l}\int_Qe^{-2\lambda(t+\omega\cdot x)}\abs{(\partial_t^2-\Delta_x+q)u}^2dxdt+ \lambda^3\int_\Omega e^{-2\lambda(T+\omega\cdot x)}\abs{u(T,x)}^2dx\\
\ \ \ +\lambda\int_\Omega e^{-2\lambda(T+\omega\cdot x)}\abs{\nabla_xu(T,x)}^2dx+\lambda\int_{\Sigma_{-,\omega}}e^{-2\lambda(t+\omega\cdot x)}\abs{\partial_\nu u}^2\abs{\omega\cdot\nu(x) }d\sigma(x)dt\\
\geq \frac{1}{2}\int_Qe^{-2\lambda(t+\omega\cdot x)}\abs{(\partial_t^2-\Delta_x)u}^2dxdt+ \lambda^3\int_\Omega e^{-2\lambda(T+\omega\cdot x)}\abs{u(T,x)}^2dx\\
\ \ \ +\lambda\int_\Omega e^{-2\lambda(T+\omega\cdot x)}\abs{\nabla_xu(T,x)}^2dx+\lambda\int_{\Sigma_{-,\omega}}e^{-2\lambda(t+\omega\cdot x)}\abs{\partial_\nu u}^2\abs{\omega\cdot\nu(x) }d\sigma(x)dt\\
-C\norm{q}_{L^{p_1}(Q)}^2\lambda^{2(s-1)}\left(\int_Qe^{-2\lambda(t+\omega\cdot x)}\abs{(\partial_t^2-\Delta_x)u}^2dxdt+ \lambda^3\int_\Omega e^{-2\lambda(T+\omega\cdot x)}\abs{u(T,x)}^2dx\right)\\
-C\norm{q}_{L^{p_1}(Q)}^2\lambda^{2(s-1)}\left(\lambda\int_\Omega e^{-2\lambda(T+\omega\cdot x)}\abs{\nabla_xu(T,x)}^2dx+\lambda\int_{\Sigma_{-,\omega}}e^{-2\lambda(t+\omega\cdot x)}\abs{\partial_\nu u}^2\abs{\omega\cdot\nu(x) }d\sigma(x)dt\right).\end{array}$$
Therefore, fixing $\lambda$ sufficiently large and applying \cite[Theorem 3.1]{Ki4} with $a=q=0$ we deduce \eqref{c1b}.

\end{proof}

\begin{rem}\label{rr} Note that, by density, \eqref{c1b} remains true for  $u\in\mathcal C^1([0,T]; L^2(\Omega))\cap \mathcal C([0,T]; H^1(\Omega))$ satisfying \eqref{c1a}, $(\partial_t^2-\Delta_x)u\in L^2(Q)$ and $\partial_\nu u\in L^2(\Sigma)$.\end{rem}

\subsection{Completion of the proof of Theorem \ref{thm1}}
This subsection is devoted to the proof of Theorem \ref{thm1}.  From now on, we set $q=q_2-q_1$ on $Q$ and  we assume  that $q=0$ on $\R^{1+n}\setminus Q$. For all $\theta\in\mathbb S^{n-1}$ and all  $r>0$, we set
\[\partial\Omega_{+,r,\theta}=\{x\in\partial\Omega:\ \nu(x)\cdot \theta>r\},\quad\partial\Omega_{-,r,\theta}=\{x\in\partial\Omega:\ \nu(x)\cdot \theta\leq r\}\]
and $\Sigma_{\pm,r,\theta}=(0,T)\times \partial\Omega_{\pm,r,\theta}$. Here and in the remaining of this text we always assume, without mentioning it, that $\theta$ and $r$ are chosen in such way that $\partial\Omega_{\pm,r,\pm \theta}$ contain  a non-empty relatively open subset of $\partial\Omega$.
Without lost of generality we assume that there exists $\epsilon>0$ such that  for all $\omega\in\{\theta\in\mathbb S^{n-1}:|\theta-\omega_0|\leq\epsilon\} $ we have $\partial\Omega_{-,\epsilon,\omega}\subset V'$. In order to prove Theorem \ref{thm1}, we will use the Carleman estimate stated in Theorem \ref{c1}. Let   $\lambda >\lambda_3$ and fix $\omega\in\{\theta\in\mathbb S^{n-1}:|\theta-\omega_0|\leq\epsilon\} $. According to Proposition \ref{p4}, we can introduce
\[u_1(t,x)=e^{-\lambda(t+x\cdot\omega)}(e^{-i(t,x)\cdot\xi}+w_1(t,x)),\ (t,x) \in Q,\]
where $u_1\in H^1(Q)$ satisfies $\partial_t^2u_1-\Delta_xu_1+q_1u_1=0$,  $\xi\cdot(1,-\omega)=0$ and  $w_1$ satisfies \eqref{CGO11} for $j=1$. Moreover, in view of Proposition \ref{p5}, we consider $u_2\in H^1(Q)$ a solution of $\partial_t^2u_2-\Delta_xu_2+q_2u_2=0$, of the form
\[u_2(t,x)=e^{\lambda (t+x\cdot\omega)}(1+w_2(t,x)),\quad (t,x)\in Q,\]
where  $w_2$ satisfies \eqref{CGO11} for $j=2$.
In view of Proposition \ref{p2}, there exists a unique weak solution $z_1\in H_\Box (Q)$ of
 \bel{eq3}
\left\{
\begin{array}{ll}
 \partial_t^2z_1-\Delta_xz_1 +q_1z_1=0 &\mbox{in}\ Q,
\\

\tau_{0}z_1=\tau_{0}u_2. &

\end{array}
\right.
\ee
Then, $u=z_1-u_2$ solves
  \bel{eq4}
\left\{\begin{array}{ll}
 \partial_t^2u-\Delta_xu +q_1u=(q_2-q_1)u_2 &\mbox{in}\ Q,
\\
u(0,x)=\partial_tu(0,x)=0 & \mathrm{on}\  \Omega,\\

u=0 &\mbox{on}\ \Sigma.
\end{array}\right.
\ee
Since $u_2\in H^1(Q)$, by the Sobolev embedding theorem we have $(q_2-q_1)u_2\in L^2(Q)$. Thus, repeating the arguments of Theorem \ref{thm3}, we derive the formula \eqref{e2}. On the other hand, we have $u_{|t=0}=\partial_tu_{|t=0}=u_{|\Sigma}=0$ and condition \eqref{thm1a} implies that $u_{|t=T}=\partial_\nu u_{|V}=0$. In addition, in view of \cite[Theorem 2.1]{LLT}, we have  $\partial_\nu u\in L^2(\Sigma)$.  Combining this with the fact that $u\in \mathcal C^1([0,T];L^2(\Omega))$ and $u_1\in H^1(Q)\subset H^1(0,T;L^2(\Omega))$, we obtain
\begin{equation}\label{t3a} \int_Qqu_2u_1dxdt=-\int_{\Sigma\setminus V}\partial_\nu uu_1d\sigma(x)dt+\int_\Omega \partial_tu(T,x)u_1(T,x)dx.\end{equation}
Applying  the Cauchy-Schwarz inequality to the first expression on the right hand side of this formula and using the fact that  $(\Sigma\setminus V)\subset {\Sigma}_{+,\epsilon,\omega}$, we get
\[\begin{aligned}\abs{\int_{\Sigma\setminus V}\partial_\nu uu_1d\sigma(x)dt}&\leq\int_{{\Sigma}_{+,\epsilon,\omega}}\abs{\partial_\nu ue^{-\lambda(t+\omega\cdot x)}(e^{-i(t,x)\cdot\xi}+w_1)} d\sigma(x)dt \\
 \ &\leq C(1+\norm{w_1}_{L^2(\Sigma)})\left(\int_{{\Sigma}_{+,\epsilon,\omega}}\abs{e^{-\lambda(t+\omega\cdot x)}\partial_\nu u}^2d\sigma(x)dt\right)^{\frac{1}{2}},\end{aligned}\]
for some $C$ independent of $\lambda$. On the other hand, one can check that
\[\norm{w_1}_{L^2(\Sigma))}\leq C\norm{w_1}_{H^1(Q)}.\]
Combining this with \eqref{CGO11}, we obtain
$$\abs{\int_{\Sigma\setminus V}\partial_\nu uu_1d\sigma(x)dt}\leq C \left(\int_{{\Sigma}_{+,\epsilon,\omega}}\abs{e^{-\lambda(t+\omega\cdot x)}\partial_\nu u}^2d\sigma(x)dt\right)^{\frac{1}{2}}.$$
 In the same way, we have
\[\begin{aligned}\abs{\int_\Omega \partial_tu(T,x)u_1(T,x)dx}&\leq\int_{\Omega}\abs{\partial_t u(T,x)e^{-\lambda(T+\omega\cdot x)}\left(e^{-i(T,x)\cdot\xi}+w_1(T,x)\right)}dx \\
 \ &\leq C \left(\int_{\Omega}\abs{e^{-\lambda(T+\omega\cdot x)}\partial_t u(T,x)}^2dx\right)^{\frac{1}{2}}.\end{aligned}\]
Combining these estimates with the Carleman estimate \eqref{c1b} and applying the fact that $u_{|t=T}=\partial_\nu u_{|\Sigma_{-,\omega}}=0$, ${\partial\Omega}_{+,\epsilon,\omega}\subset {\partial\Omega}_{+,\omega}$, we find
\begin{eqnarray}\label{loi}&&\abs{\int_Qqu_2u_1dxdt}^2\cr
&&\leq C\left(\int_{{\Sigma}_{+,\epsilon,\omega}}\abs{e^{-\lambda(t+\omega\cdot x)}\partial_\nu u}^2d\sigma(x)dt+\int_\Omega \abs{e^{-\lambda(T+\omega\cdot x)}\partial_tu(T,x)}^2dx\right)\cr
&&\leq \epsilon^{-1}C\lambda^{-1}\left(\lambda\int_{{\Sigma}_{+,\omega}}\abs{e^{-\lambda(t+\omega\cdot x)}\partial_\nu u}^2\omega\cdot \nu(x)d\sigma(x)dt+\lambda\int_\Omega \abs{e^{-\lambda(T+\omega\cdot x)}\partial_tu(T,x)}^2dx\right)\cr
&&\leq \epsilon^{-1}C\lambda^{-1}\left(\int_Q\abs{ e^{-\lambda(t+\omega\cdot x)}(\partial_t^2-\Delta_x+q_1)u}^2dxdt\right)\cr
&&\leq \epsilon^{-1}C\lambda^{-1}\left(\int_Q\abs{ e^{-\lambda(t+\omega\cdot x)}qu_2}^2dxdt\right)\cr
&&\leq \epsilon^{-1}C\lambda^{-1}\left(\int_Q\abs{ q(1+w_2)}^2dxdt\right)
.\end{eqnarray}
Here $C>0$ stands for some generic constant independent of $\lambda$. On the other hand, in a similar way to Lemma \ref{l2}, combining the H\"older inequality and the Sobolev embedding theorem we get
$$\int_Q\abs{ q(1+w_2)}^2dxdt\leq C\norm{q}_{L^{p}(Q)}^2\norm{(1+w_2)}_{H^{\frac{n+1}{p}}(Q)}^2\leq C\norm{q}_{L^{p}(Q)}^2(1+\norm{w_2}_{H^1(Q)})^2.$$
Combining this with \eqref{CGO11} and \eqref{loi}, we obtain
\[\abs{\int_Qqu_2u_1dxdt}\leq C\lambda^{-1/2}.\]
It follows
\begin{equation}\label{t3cc}\lim_{\lambda\to+\infty}\int_Qqu_2u_1dxdt=0.\end{equation}
Moreover, \eqref{GO1}-\eqref{CGO11} imply
$$\int_Qqu_2u_1dxdt=\int_{\R^{1+n}}q(t,x)e^{-i\xi\cdot(t,x)}dxdt+\int_{Q}W_\lambda(t,x)dxdt,$$
with
\[\int_{Q}|W_\lambda(t,x)|dxdt\leq C\lambda^{-1}.\]
Combining this with \eqref{t3cc}, for all $\omega\in\{y\in\mathbb S^{n-1}:|y-\omega_0|\leq\epsilon\} $ and  all $\xi\in(1,-\omega)^\bot:=\{\zeta\in\R^{1+n}:\ \zeta\cdot(1,-\omega)=0\}$, the Fourier transform $\mathcal F(q)$ of $q$ satisfies $\mathcal F(q)(\xi)=0$. On the other hand, since $q\in L^1(\R^{1+n})$ is  supported on $\overline{Q}$ which is compact,  $\mathcal F(q)$ is a complex valued real-analytic function and it follows that $q=0$ and $q_1=q_2$. This completes the  proof of Theorem \ref{thm1}.

\section{Proof of   Theorem \ref{thm2}}
Let us first remark that, in contrast to Theorem \ref{thm3}, in Theorem \ref{thm1} we do not restrict the data to solutions of \eqref{wave} satisfying $u_{|t=0}=0$. In this section we will show  Theorem \ref{thm2} by combining the restriction on the bottom $t=0$, the top $t=T$ of $Q$ stated in Theorem \ref{thm3} with the restriction on the lateral boundary $\Sigma$ stated in Theorem \ref{thm1}. From now on, we fix $q_1,q_2\in L^\infty(0,T;L^p(\Omega))$, $p>n$, and we will show that condition \eqref{thm2a} implies $q_1=q_2$. For this purpose we still consider  exponentially growing and decaying GO solutions close to those of the previous subsection, but this time we need to take into account the constraint $u_2(0,x)=0$ required in
Theorem \ref{thm2}. For this purpose, we will consider a   different construction comparing to the one of the previous section which will follow from a Carleman estimate in negative order Sobolev space only with respect to the space variable.
\subsection{Carleman estimate in negative Sobolev space  for Theorem \ref{thm2}}
In this subsection we will  derive a Carleman estimate in negative order Sobolev space which will be one of the main tool for the construction of exponentially growing solutions $u_2$ of \eqref{wave1} taking the form
\bel{GOO1}u_2(t,x)=e^{\lambda(t+x\cdot\omega)}(1+w_2(t,x))-e^{\lambda(-t+x\cdot\omega)},\ee
 with the restriction $\tau_{0,2}u_2=0$ (recall that for $v\in\mathcal C^\infty(\overline{Q})$, $\tau_{0,2}v=v_{|t=0}$). In a similar way to the previous section, for all $m\in\R$, we introduce the space $H^m_\lambda(\R^{n})$ defined by
\[H^m_\lambda(\R^{n})=\{u\in\mathcal S'(\R^{n}):\ (|\xi|^2+\lambda^2)^{m\over 2}\hat{u}\in L^2(\R^{n})\},\]
with the norm
\[\norm{u}_{H^m_\lambda(\R^{n})}^2=\int_{\R^n}(|\xi|^2+\lambda^2)^{m}|\hat{u}(\xi)|^2 d\xi .\]
In order to construct solutions $u_2$ of the form \eqref{GO2} and satisfying $\tau_{0,2}u_2=0$, instead of the Carleman estimate \eqref{l3a}, we consider the following.

\begin{Thm}\label{t1} There exists $\lambda_2'>0$ such that for $\lambda>\lambda_2'$ and for all $v\in\mathcal C^2([0,T];\mathcal C^\infty_0(\Omega))$ satisfying
\bel{t1a}v(T,x)=\pd_tv(T,x)= v(0,x)=0,\quad x\in\R^n,\ee
  we have
\bel{t1b}\norm{v}_{L^2((0,T)\times\R^{n})}\leq C\norm{P_{\omega,-\lambda}v+q_2v}_{L^2(0,T;H^{-1}_\lambda(\R^{n}))},\ee
with $C>0$ independent of $v$ and $\lambda$.\end{Thm}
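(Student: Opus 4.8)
The plan is to follow the two–step pattern already used for Lemma \ref{l2} and Proposition \ref{p4}: first establish the estimate for the free operator (the case $q_2 = 0$), and then absorb the zeroth order term $q_2 v$ by taking $\lambda$ large. For the absorption step I would argue exactly as in Lemma \ref{l2}: setting $r = \tfrac{n}{p}\in(0,1)$ (legitimate since $p>n$) and using $\norm{w}_{H^{-1}_\lambda(\R^n)}\leq \lambda^{r-1}\norm{w}_{H^{-r}(\R^n)}$ together with the dual Sobolev embedding $L^{p_3'}(\R^n)\hookrightarrow H^{-r}(\R^n)$, $\tfrac{1}{p_3'}=\tfrac12+\tfrac{r}{n}$, and Hölder's inequality $\norm{q_2(t,\cdot)v(t,\cdot)}_{L^{p_3'}}\leq \norm{q_2(t,\cdot)}_{L^p}\norm{v(t,\cdot)}_{L^2}$, one obtains
\[ \norm{q_2 v}_{L^2(0,T;H^{-1}_\lambda(\R^n))}\leq C\lambda^{\frac{n}{p}-1}\norm{q_2}_{L^\infty(0,T;L^p(\Omega))}\norm{v}_{L^2((0,T)\times\R^n)}. \]
Since $\tfrac{n}{p}-1<0$, this term is absorbed into the left-hand side of the free estimate for $\lambda$ large, which yields \eqref{t1b}. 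Thus the whole difficulty is concentrated in the free bound $\norm{v}_{L^2((0,T)\times\R^n)}\leq C\norm{P_{\omega,-\lambda}v}_{L^2(0,T;H^{-1}_\lambda(\R^n))}$.

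To prove the free estimate I would exploit that, after conjugation, $P_{\omega,-\lambda}$ has constant coefficients in $x$, and take the partial Fourier transform in the space variable. Writing $\hat v(t,\xi)$ for this transform, the equation $P_{\omega,-\lambda}v=f$ becomes, for each fixed $\xi$, the ordinary differential equation
\[ \partial_t^2\hat v-2\lambda\partial_t\hat v+\left(\abs{\xi}^2+2i\lambda(\omega\cdot\xi)\right)\hat v=\hat f(\cdot,\xi),\quad t\in(0,T), \]
subject to $\hat v(T,\xi)=\partial_t\hat v(T,\xi)=\hat v(0,\xi)=0$. Because the target norm weights the frequency $\xi$ by $(\abs{\xi}^2+\lambda^2)^{-1}$, by Plancherel the free estimate is equivalent to the family of one–dimensional bounds
\[ \int_0^T\abs{\hat v(t,\xi)}^2\,dt\leq\frac{C}{\abs{\xi}^2+\lambda^2}\int_0^T\abs{\hat f(t,\xi)}^2\,dt, \]
to be proved uniformly in $\xi\in\R^n$ and $\lambda>\lambda_2'$. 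I would attack each such ODE by an energy identity: the substitution $\hat v=e^{\lambda t}z$ removes the first–order term and turns the equation into $z''+cz=e^{-\lambda t}\hat f$ with $c=(\abs{\xi}^2-\lambda^2)+2i\lambda(\omega\cdot\xi)$ and $z(T)=z'(T)=z(0)=0$; multiplying by $\bar z$ (or by suitable combinations of $\bar z$ and $\bar z'$) and integrating by parts, the boundary contributions at $t=T$ are killed by the Cauchy data $z(T)=z'(T)=0$ and those at $t=0$ by $z(0)=0$, precisely as the conditions $u|_{t=0}=\partial_t u|_{t=0}=0$ are exploited in \cite[Theorem 3.1]{Ki4}.

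The main obstacle is to make the constant $C$ in this ODE bound uniform across all frequency regimes. Taking the imaginary part of the $\bar z$–identity gives $2\lambda(\omega\cdot\xi)\int\abs{z}^2=\mathrm{Im}\int e^{-\lambda t}\hat f\,\bar z$, which is only useful away from the set $\omega\cdot\xi\approx 0$, while the real part merely relates $\int\abs{z'}^2$ to $\abs{\xi}^2\int\abs{z}^2$ and does not by itself control $\int\abs{z}^2$. The delicate zone is the resonant/low–frequency region where $\abs{\xi}\sim\lambda$ and $\omega\cdot\xi$ is small; there one must bring in additional multipliers (or use the explicit backward representation $z(t)=\int_t^T\kappa^{-1}\sin(\kappa(s-t))\,e^{-\lambda s}\hat f(s)\,ds$ with $\kappa^2=c$, combined with a Schur test) to recover the gain $(\abs{\xi}^2+\lambda^2)^{-1}$. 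This is exactly where the homogeneous condition $z(0)=0$ is indispensable, since it eliminates the spurious mode that would otherwise grow against the weight $e^{2\lambda t}$; it is moreover this same condition that, through the Hahn–Banach duality used to build the solutions \eqref{GOO1}, forces $\tau_{0,2}u_2=u_2(0,\cdot)=0$. Once the uniform ODE estimate is secured, integrating in $\xi$ and invoking Plancherel yields the free estimate, and the absorption argument above completes the proof of Theorem \ref{t1}.
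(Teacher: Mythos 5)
Your route to \eqref{t1b} is genuinely different from the paper's. The paper never looks at individual space frequencies: it first proves an $H^1_\lambda$-to-$L^2$ estimate (Lemma~\ref{l5}) via the time reversal $w(t,x)=v(T-t,x)$, the integration-by-parts inequality $\int_Q\abs{P_{-\omega,\lambda}w}^2\,dxdt\geq\int_Q\abs{\Box w}^2\,dxdt+c\lambda^2\int_Q\abs{w}^2\,dxdt$ borrowed from \cite[Lemma 4.1]{Ki2}, and a standard energy estimate to recover $\nabla_xw$; it then shifts this estimate down to the $L^2\to L^2(0,T;H^{-1}_\lambda)$ scale by conjugating with $\left\langle D_x,\lambda\right\rangle^{-1}$, inserting cutoffs $\psi,\psi_1$, and killing the commutator errors as $OpS^{-\infty}_\lambda$ operators via \cite[Theorem 18.1.8]{Ho3}; finally $q_2$ is absorbed as in Lemma~\ref{l2}. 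Your absorption step coincides with the paper's and is correct. Your free estimate instead goes through the partial Fourier transform in $x$ and a family of one-dimensional ODE bounds; if completed, this is more elementary and self-contained (no appeal to \cite{Ki2} or to pseudodifferential calculus), and it yields a constant uniform in all $\lambda>0$ rather than only for $\lambda$ large.

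As written, however, your proof stops exactly at its pivotal point: you state that the uniform bound $\int_0^T\abs{\hat v(t,\xi)}^2dt\leq C(\abs{\xi}^2+\lambda^2)^{-1}\int_0^T\abs{\hat f(t,\xi)}^2dt$ is ``the main obstacle'' and only list candidate devices, without proving it. The gap is real but fillable, and by the very device you mention (explicit representation plus a Schur/Young test), more simply than you anticipate. Setting $Z(s)=e^{-\lambda(T-s)}\hat v(T-s,\xi)$ and $G(s)=e^{-\lambda(T-s)}\hat f(T-s,\xi)$, one has $Z''+cZ=G$ with $Z(0)=Z'(0)=0$, hence
\begin{equation*}
e^{-\lambda s}\cdot e^{\lambda s}Z(s)=Z(s)=\int_0^s\frac{\sin(\kappa(s-\sigma))}{\kappa}\,G(\sigma)\,d\sigma,\qquad \kappa=\sqrt{c}=\alpha+i\beta,\ \beta\geq0,
\end{equation*}
so that the per-frequency claim reduces, by Young's inequality on $(0,T)$, to $\norm{K}_{L^1(0,T)}\leq C_T(\abs{\xi}^2+\lambda^2)^{-1/2}$ for the kernel $K(u)=\kappa^{-1}\sin(\kappa u)e^{-\lambda u}$ (the weights $e^{\pm\lambda s}$ recombine into $e^{-\lambda(s-\sigma)}$). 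Two elementary facts give this uniformly. First, $\beta\leq\lambda$ always, since $\beta^2=(\abs{c}-\Pre{c})/2$ and $\abs{c}^2=(\abs{\xi}^2-\lambda^2)^2+4\lambda^2(\omega\cdot\xi)^2\leq(\abs{\xi}^2+\lambda^2)^2$; moreover $\lambda^2-\beta^2=\bigl((\lambda^2+\abs{\xi}^2)-\abs{c}\bigr)/2$. Second, $\abs{K(u)}\leq\min\bigl(u,\abs{\kappa}^{-1}\bigr)e^{-(\lambda-\beta)u}$. Writing $R=(\abs{\xi}^2+\lambda^2)^{1/2}$: if $\abs{\kappa}\geq R/2$ then $\norm{K}_{L^1(0,T)}\leq 2T/R$; if $\abs{\kappa}<R/2$ then $\lambda-\beta\geq(R^2-\abs{\kappa}^2)/(2(\lambda+\beta))\geq 3R/16$ and $\norm{K}_{L^1}\leq(\lambda-\beta)^{-2}\leq CR^{-2}\leq CR^{-1}$. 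This covers every regime, including your ``delicate zone'' $\abs{\xi}\sim\lambda$, $\omega\cdot\xi\approx0$ (there $\abs{\kappa}$ is small but $\lambda-\beta\gtrsim R$, so the decay is strong) and the characteristic directions $\xi\parallel\omega$ (there $\beta=\lambda$ and the kernel does not decay, but $\abs{\kappa}=R$ and the bounded time interval suffices). Plancherel in $\xi$ then gives the free estimate, and your absorption step finishes the proof.

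One side claim in your sketch is a misdiagnosis: the condition $z(0)=0$, i.e.\ $v(0,\cdot)=0$, is not ``indispensable'' for the uniform ODE bound. The computation above uses only the two conditions at $t=T$, where the weight $e^{\lambda t}$ is maximal; the purportedly ``spurious'' mode growing like $e^{2\lambda t}$ is indeed present in the backward solution, but its weighted contribution is already controlled by the kernel bound. Since an extra hypothesis only shrinks the class of admissible $v$, this does not invalidate your argument, but the hypothesis $v(0,\cdot)=0$ in Theorem~\ref{t1} is there for a different reason: in the duality construction of Proposition~\ref{p6}, admitting test functions with $v(0,\cdot)=0$ but $\partial_tv_{|t=0}$ arbitrary is what forces exactly the single extra trace condition $\tau_{0,2}w_2=0$ on the constructed solution, and no more.
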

In order to prove this theorem, we start by recalling the following intermediate tools. From now on, for $m\in\R$ and $\xi\in \R^n$,  we set $$\left\langle \xi,\lambda\right\rangle=(|\xi|^2+\lambda^2)^{1\over2}$$
and $\left\langle D_x,\lambda\right\rangle^m u$ defined by
\[\left\langle D_x,\lambda\right\rangle^m u=\mathcal F^{-1}(\left\langle \xi,\lambda\right\rangle^m \mathcal Fu).\]
For $m\in\R$ we define also the class of symbols
\[S^m_\lambda=\{c_\lambda\in\mathcal C^\infty(\R^{n}\times\R^{n}):\ |\pd_x^\alpha\pd_\xi^\beta c_\lambda(x,\xi)|\leq C_{\alpha,\beta}\left\langle \xi,\lambda\right\rangle^{m-|\beta|},\  \alpha,\beta\in\mathbb N^n\}.\]
Following \cite[Theorem 18.1.6]{Ho3}, for any $m\in\R$ and $c_\lambda\in S^m_\lambda$, we define $c_\lambda(x,D_x)$, with  $D_x=-i\nabla_x$, by
\[c_\lambda(x,D_x)u(x)=(2\pi)^{-{n\over 2}}\int_{\R^{n}}c_\lambda(x,\xi)\hat{u}(\xi)e^{ix\cdot \xi} d\xi.\]
For all $m\in\R$, we  define $OpS^m_\lambda:=\{c_\lambda(x,D_x):\ c_\lambda\in S^m_\lambda\}$ and for $m=-\infty$ we set
$$OpS^{-\infty}_\lambda:=\bigcap_{m\in\R}OpS^m_\lambda.$$
Now let us consider the following intermediate result.
\begin{lem}\label{l5} There exists $\lambda_2''>0$ such that for $\lambda>\lambda_2''$ and for all $v\in\mathcal C^2([0,T];\mathcal C^\infty_0(\Omega))$ satisfying \eqref{t1a},  we have
\bel{l5a}\norm{v}_{L^2(0,T;H^1_\lambda(\R^{n}))}\leq C\norm{P_{\omega,-\lambda}v}_{L^2((0,T)\times\R^{n}))},\ee
with $C>0$ independent of $v$ and $\lambda$.\end{lem}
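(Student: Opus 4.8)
My plan is to eliminate the space variable by a partial Fourier transform in $x$ and to reduce \eqref{l5a} to a one–parameter family of one–dimensional estimates in $t$, uniform in the dual variable and in $\lambda$. Since $v(t,\cdot)\in\mathcal C^\infty_0(\Omega)$ I may extend it by zero to $\R^n$ and set $w(t,\xi)=\hat v(t,\xi)$. Recalling that $P_{\omega,-\lambda}=\Box-2\lambda(\pd_t-\omega\cdot\nabla_x)$, the Fourier transform in $x$ turns $P_{\omega,-\lambda}$ into the $\xi$–dependent ordinary differential operator
\[L_\xi:=\pd_t^2-2\lambda\pd_t+\bigl(\abs{\xi}^2+2i\lambda\,\omega\cdot\xi\bigr),\]
and, by the Plancherel theorem, \eqref{l5a} is equivalent to
\[\bigl(\abs{\xi}^2+\lambda^2\bigr)\int_0^T\abs{w(t,\xi)}^2\,dt\leq C\int_0^T\abs{L_\xi w(t,\xi)}^2\,dt,\quad\textrm{a.e. }\xi\in\R^n,\]
with $C$ independent of $\xi$ and $\lambda$; here, for each fixed $\xi$, the map $t\mapsto w(t,\xi)$ is $\mathcal C^2$ on $[0,T]$ and satisfies $w(0,\xi)=w(T,\xi)=\pd_tw(T,\xi)=0$ by \eqref{t1a}. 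Integrating this inequality in $\xi$ and applying Plancherel once more then gives \eqref{l5a}.

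To establish the one–dimensional estimate I fix $\xi$, abbreviate $w=w(\cdot,\xi)$, $a=\abs{\xi}^2\geq0$, $b=2\lambda\,\omega\cdot\xi\in\R$, and split $L_\xi=S+A$ into the part $S=\pd_t^2+a$, which is formally self–adjoint on $L^2(0,T)$, and the part $A=-2\lambda\pd_t+ib$, which is formally skew–adjoint. Expanding $\norm{L_\xi w}^2_{L^2(0,T)}=\norm{Sw}^2+\norm{Aw}^2+2\,\Pre{\langle Sw,Aw\rangle}$ and integrating by parts, all interior contributions to the cross term drop out because $S$ and $A$ have constant coefficients (so $[S,A]=0$), and every surviving boundary term is computed with the three end conditions on $w$. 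This yields the exact energy identity
\[\norm{L_\xi w}^2_{L^2(0,T)}=\norm{Sw}^2_{L^2(0,T)}+\norm{Aw}^2_{L^2(0,T)}+2\lambda\abs{\pd_tw(0)}^2,\]
in which the boundary term carries the favourable sign precisely because $w(0)=w(T)=\pd_tw(T)=0$. Along the way I would record the auxiliary identities $\Pre{\langle L_\xi w,w\rangle}=a\norm{w}^2-\norm{\pd_tw}^2$ and $\Pre{\langle L_\xi w,\pd_tw\rangle}=-2\lambda\norm{\pd_tw}^2-\tfrac12\abs{\pd_tw(0)}^2+b\,\Pim{\langle\pd_tw,w\rangle}$, together with $2\lambda\,\Pim{\langle\pd_tw,w\rangle}=b\norm{w}^2-\Pim{\langle L_\xi w,w\rangle}$, which let me express the first–order quantities $\norm{\pd_tw}^2$ and $\Pim{\langle\pd_tw,w\rangle}$ through data controlled by $\norm{L_\xi w}$.

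It then remains to deduce from this identity the coercivity bound $(a+\lambda^2)\norm{w}^2\leq C\bigl(\norm{Sw}^2+\norm{Aw}^2+2\lambda\abs{\pd_tw(0)}^2\bigr)$ for $\lambda$ large, and this is where I expect the real difficulty to lie. The symbol of $L_\xi$ in $t$ is $(a-\tau^2)+i(b-2\lambda\tau)$, which vanishes at $\tau=\omega\cdot\xi$ exactly when $\xi$ is parallel to $\omega$; near this characteristic set the operator is non–elliptic, the cross term $-4\lambda b\,\Pim{\langle\pd_tw,w\rangle}$ occurring in $\norm{Aw}^2$ is sign–indefinite and of the same size as the positive terms, and no Young– or Cauchy–Schwarz–type splitting closes the estimate on its own. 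Away from the characteristic set the symbol is elliptic of the expected order, and the weight $a+\lambda^2$ is recovered from $\norm{Sw}^2$ in the high–frequency range (where forcing $\norm{Sw}$ to be small makes $w$ oscillate, hence makes $\norm{\pd_tw}$ and, through $\norm{Aw}\geq$ the term $2\lambda\norm{\pd_tw}$, the quantity $4\lambda^2\norm{\pd_tw}^2$ large) combined with the Poincar\'e inequality $\norm{w}\leq T\norm{\pd_tw}$ coming from $w(0)=0$. At and near the characteristic frequencies the only available control is the boundary term $2\lambda\abs{\pd_tw(0)}^2$: the condition $w(0)=0$ together with the finiteness of $(0,T)$ prevents $w$ from being a pure characteristic oscillation and forces $\abs{\pd_tw(0)}$ to pick up the missing weight. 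Carrying out this case analysis carefully, using the exact identity above rather than lossy inequalities, gives the coercivity bound for all $\lambda\geq\lambda_2''$ with $\lambda_2''$ large, and hence \eqref{l5a}.
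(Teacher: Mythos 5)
Your Fourier reduction is sound and your energy identity
\[\norm{L_\xi w}^2_{L^2(0,T)}=\norm{Sw}^2_{L^2(0,T)}+\norm{Aw}^2_{L^2(0,T)}+2\lambda\abs{\pd_tw(0)}^2\]
is correct (I checked the boundary terms against $w(0)=w(T)=\pd_tw(T)=0$), but the proof stops exactly where the lemma lives: the coercivity bound $(a+\lambda^2)\norm{w}^2\leq C\norm{L_\xi w}^2$ is only asserted, via a ``case analysis'' that is never carried out, and the mechanism you propose for the near-characteristic frequencies --- that the boundary term $2\lambda\abs{\pd_tw(0)}^2$ must supply the missing weight there --- is unsubstantiated (you state no inequality relating $\abs{\pd_tw(0)}$ to $\norm{w}$) and is in fact not how the estimate closes. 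The obstruction you describe is illusory: the cross term $-4\lambda b\,\Pim{\left\langle\pd_tw,w\right\rangle}$ inside $\norm{Aw}^2$ is not an indefinite nuisance, because $A=-2\lambda(\pd_t-i\mu)$ with $\mu=\omega\cdot\xi$ is exactly a conjugated derivative. Setting $z(t)=e^{-i\mu t}w(t)$ one has $\norm{Aw}_{L^2(0,T)}=2\lambda\norm{z'}_{L^2(0,T)}$ and $z(0)=w(0)=0$, so the Poincar\'e inequality $\norm{z}^2\leq\tfrac{T^2}{2}\norm{z'}^2$ yields
\[\lambda^2\norm{w}^2_{L^2(0,T)}=\lambda^2\norm{z}^2_{L^2(0,T)}\leq\frac{T^2}{8}\norm{Aw}^2_{L^2(0,T)},\]
uniformly in $\mu$ --- including on your characteristic set, where no degeneration occurs at all.

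The remaining weight $a\norm{w}^2$ comes from $S$ alone: time-reversing and using $w(T)=\pd_tw(T)=0$, the standard ODE energy inequality for $w''+aw=Sw$ with vanishing Cauchy data gives $a\norm{w}^2_{L^2(0,T)}\leq T^2\norm{Sw}^2_{L^2(0,T)}$, with constant independent of $a$. Since the boundary term in your identity is nonnegative, $\norm{Sw}^2+\norm{Aw}^2\leq\norm{L_\xi w}^2$, and the two bounds combine to $(a+\lambda^2)\norm{w}^2\leq C(T)\norm{L_\xi w}^2$ for \emph{every} $\lambda>0$ --- no case analysis, no largeness of $\lambda$, and no use of $\abs{\pd_tw(0)}$. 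Filled in this way, your argument becomes a frequency-side rendition of the paper's proof, which works on the physical side: it time-reverses ($w(t,x)=v(T-t,x)$), invokes the integration-by-parts inequality of \cite[Lemma 4.1]{Ki2} to get $\int_Q\abs{P_{-\omega,\lambda}w}^2dxdt\geq\int_Q\abs{\Box w}^2dxdt+c\lambda^2\int_Q\abs{w}^2dxdt$ (the counterpart of your identity), and then uses the wave-equation energy estimate with vanishing data at $t=0$ to control $\int_Q\abs{\nabla_xw}^2dxdt$ by $\int_Q\abs{\Box w}^2dxdt$ (the counterpart of the ODE energy step above). As submitted, however, the decisive coercivity step is a genuine gap, and the sketch you give for it points in the wrong direction.
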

\begin{proof}
Consider $w(t,x)=v(T-t,x)$ and note that according to \eqref{t1a}, we have $w\in\mathcal C^2([0,T];\mathcal C^\infty_0(\Omega))$ and
$$w(0,x)=\pd_tw(0,x)= w(T,x)=0,\quad x\in\R^n.$$
Therefore, in a similar way to the proof of \cite[Lemma 4.1]{Ki2}, one can check that
\bel{l5b}\int_Q|P_{-\omega,\lambda}w|^2dxdt\geq \int_Q|\Box w|^2dxdt+c\lambda^2\int_Q| w|^2dxdt ,\ee
with $c>0$ independent of $w$ and $\lambda$. Now, recalling that $w$ solves
$$\left\{ \begin{array}{rcll} \partial_t^2w-\Delta_x w =  \Box w,&  (t,x) \in Q ,\\

w(0,x)=0,\ \ \partial_t w(0,x)=0,& x\in\Omega\\
 w(t,x)=0&(t,x)\in\Sigma, \end{array}\right.$$
we deduce that
$$\int_Q|\nabla_x w|^2dxdt\leq C\int_Q|\Box w|^2dxdt,$$
where $C$ depends only on $T$ and $\Omega$. Combining this with \eqref{l5b}, we obtain
$$\norm{w}_{L^2(0,T;H^1_\lambda(\R^{n}))}^2\leq C\int_Q|P_{-\omega,\lambda}w|^2dxdt.$$
Using the fact  that $P_{-\omega,\lambda}w(t,x)=P_{\omega,-\lambda}v(T-t,x)$, we deduce \eqref{l5a}.\end{proof}

Armed with this Carleman estimate, we are now in position of completing the proof of Theorem \ref{t1}.
\textbf{Proof of Theorem \ref{t1}.} Let $v\in\mathcal C^2([0,T];\mathcal C^\infty_0(\Omega))$ satisfy \eqref{t1a}, consider $\Omega_j$, $j=1,2$, two bounded open smooth domains of $\R^n$ such that $\overline{\Omega} \subset\Omega_1$, $\overline{\Omega_1} \subset\Omega_2$ and let $\psi\in\mathcal C^\infty_0(\Omega_2)$ be such that $\psi=1$ on   $\overline{\Omega_1}$. We consider $w\in\mathcal C^2([0,T];\mathcal C^\infty_0(\overline{\Omega}))$ given by
$$w(t,\cdot)=\psi \left\langle D_x,\lambda\right\rangle^{-1}v(t,\cdot)$$
and we remark that $w$ satisfies
\bel{t1c}w(T,x)=\pd_tw(T,x)= w(0,x)=0,\quad x\in\R^n.\ee
Now let us consider the quantity $\left\langle D_x,\lambda\right\rangle^{-1}P_{\omega,-\lambda}\left\langle D_x,\lambda\right\rangle w$.
Note first that
$$\norm{P_{\omega,-\lambda}\left\langle D_x,\lambda\right\rangle w}_{L^2(0,T;H^{-1}_\lambda(\R^n))}=\norm{\left\langle D_x,\lambda\right\rangle^{-1}P_{\omega,-\lambda}\left\langle D_x,\lambda\right\rangle w}_{L^2((0,T)\times\R^n)}.$$
Moreover, it is clear that
$$\left\langle D_x,\lambda\right\rangle^{-1}P_{\omega,-\lambda}\left\langle D_x,\lambda\right\rangle=P_{\omega,-\lambda}.$$
Therefore, we have
$$\norm{P_{\omega,-\lambda}\left\langle D_x,\lambda\right\rangle w}_{L^2(0,T;H^{-1}_\lambda(\R^n))}=\norm{P_{\omega,-\lambda} w}_{L^2((0,T)\times\R^n)}$$
and, since $w$ satisfies \eqref{t1c}, combining this with \eqref{l5a}  we deduce that
\bel{t1d}\norm{w}_{L^2(0,T;H^1_\lambda(\R^{n}))}\leq C\norm{P_{\omega,-\lambda}\left\langle D_x,\lambda\right\rangle w}_{L^2(0,T;H^{-1}_\lambda(\R^n))}.\ee
On the other hand, fixing $\psi_1\in \mathcal C^\infty_0(\Omega_1)$ satisfying $\psi_1=1$ on $\overline{\Omega}$, we get
$$w(t,\cdot)=\left\langle D_x,\lambda\right\rangle^{-1}v(t,\cdot)+(\psi-1)\left\langle D_x,\lambda\right\rangle^{-1}\psi_1v(t,\cdot)$$
and, combining this with \eqref{t1d}, we deduce  that
\bel{t1e}\begin{array}{l}\norm{v}_{L^2((0,T)\times \R^n)}\\
=\norm{\left\langle D_x,\lambda\right\rangle^{-1}v}_{L^2(0,T;H^{1}_\lambda(\R^n))}\\
\leq \norm{w}_{L^2(0,T;H^1_\lambda(\R^{n}))}+\norm{(\psi-1)\left\langle D_x,\lambda\right\rangle^{-1}\psi_1v}_{L^2(0,T;H^{1}_\lambda(\R^n))}\\
\leq C\norm{P_{\omega,-\lambda}\left\langle D_x,\lambda\right\rangle w}_{L^2(0,T;H^{-1}_\lambda(\R^n))}+\norm{(\psi-1)\left\langle D_x,\lambda\right\rangle^{-1}\psi_1v}_{L^2(0,T;H^{1}_\lambda(\R^n))}\\
\leq C\norm{P_{\omega,-\lambda}v}_{L^2(0,T;H^{-1}_\lambda(\R^n))}+C\norm{P_{\omega,-\lambda}\left\langle D_x,\lambda\right\rangle (\psi-1)\left\langle D_x,\lambda\right\rangle^{-1}\psi_1v}_{L^2(0,T;H^{-1}_\lambda(\R^n))}\\
\ \ \ +\norm{(\psi-1)\left\langle D_x,\lambda\right\rangle^{-1}\psi_1v}_{L^2(0,T;H^{1}_\lambda(\R^n))}.\end{array}\ee
Moreover, since $(\psi-1)=0$ on neighborhood of supp$(\psi_1)$, in view of \cite[Theorem 18.1.8]{Ho3},  we have $(\psi-1) \left\langle D_x,\lambda\right\rangle^{-1}\psi_1\in OpS^{-\infty}_\lambda$. In the same way,  \cite[Theorem 18.1.8]{Ho3} implies that
$$P_{\omega,-\lambda}\left\langle D_x,\lambda\right\rangle (\psi-1)\left\langle D_x,\lambda\right\rangle^{-1}\psi_1\in OpS^{-\infty}_\lambda$$
and we deduce that
$$\begin{array}{l}C\norm{P_{\omega,-\lambda}\left\langle D_x,\lambda\right\rangle (\psi-1)\left\langle D_x,\lambda\right\rangle^{-1}\psi_1v}_{L^2(0,T;H^{-1}_\lambda(\R^n))}+\norm{(\psi-1)\left\langle D_x,\lambda\right\rangle^{-1}\psi_1v}_{L^2(0,T;H^{-1}_\lambda(\R^n))}\\
\leq \frac{\norm{v}_{L^2((0,T)\times \R^n)}}{\lambda^2}.\end{array}$$
Combining this with \eqref{t1e} and choosing $\lambda$ sufficiently large, we deduce \eqref{t1b} for $q_2=0$. Then, we deduce \eqref{t1b} for $q_2\neq0$ by applying arguments similar to Lemma \ref{l2}.\qed

Applying the Carleman estimate \eqref{t1b}, we can now build solutions $u_2$ of the form \eqref{GOO1} and satisfying $\tau_{0,2}u_2=0$ and complete the proof of Theorem \ref{thm2}.
\subsection{Completion of the proof of   Theorem \ref{thm2}}
We start by proving existence of a solution $u_2\in L^2(0,T;H^1(\Omega))$ of the form \eqref{GOO1} with the term $w_2\in L^2(0,T;H^1(\Omega))\cap e^{-\lambda (t+x\cdot\omega)}H_{\Box,*}(Q)$, satisfying
\bel{CGO01}\norm{w_2}_{L^2(0,T;H^1(\Omega))}+\lambda\norm{w_2}_{L^2(Q)}\leq C,\ee
\bel{CGO02}\tau_{0,2}w_2=0.\ee
This result is summarized in the following way.
\begin{prop}\label{p6} There exists $\lambda_2>\lambda_1$ such that for $\lambda>\lambda_2$ we can find a solution  $u_2\in L^2(0,T;H^1(\Omega))$ of $\Box u_2+q_2u_2=0$ in $Q$ taking the form \eqref{GOO1}
with $w_2\in L^2(0,T;H^1(\Omega))\cap e^{-\lambda (t+x\cdot\omega)}H_{\Box,*}(Q)$ satisfying \eqref{CGO01}-\eqref{CGO02}.\end{prop}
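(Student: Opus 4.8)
The plan is to reduce the construction of $u_2$ to the solvability of a single transport-type equation for the correction $w_2$, and then to solve that equation by duality from the Carleman estimate \eqref{t1b}. Since $|\omega|=1$, both $e^{\lambda(t+x\cdot\omega)}$ and $e^{\lambda(-t+x\cdot\omega)}$ are annihilated by $\Box$, and the conjugation identity $\Box(e^{\lambda(t+x\cdot\omega)}f)=e^{\lambda(t+x\cdot\omega)}P_{\omega,\lambda}f$ holds for any $f$. Inserting the ansatz \eqref{GOO1} into $\Box u_2+q_2u_2=0$ and factoring out $e^{\lambda(t+x\cdot\omega)}$, I would find that $u_2$ solves the wave equation exactly when
\[P_{\omega,\lambda}w_2+q_2w_2=-q_2\left(1-e^{-2\lambda t}\right)=:G\quad\textrm{in }Q.\]
The term $e^{-2\lambda t}$ on the right is produced by the reflected wave $-e^{\lambda(-t+x\cdot\omega)}$, and it is precisely this reflection that makes the constraint attainable: one computes $u_2(0,x)=e^{\lambda x\cdot\omega}w_2(0,x)$, so that $\tau_{0,2}u_2=0$ is equivalent to $\tau_{0,2}w_2=0$. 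Moreover, since $p>n\geq2$ and $\Omega$ is bounded we have $q_2\in L^\infty(0,T;L^2(\Omega))\subset L^2(Q)$, and because $|1-e^{-2\lambda t}|\leq1$ the source obeys $\norm{G}_{L^2(Q)}\leq\norm{q_2}_{L^2(Q)}\leq C$ uniformly in $\lambda$.

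Next I would solve this equation by the standard duality argument, noting that $P_{\omega,-\lambda}+q_2$ is the formal adjoint of $P_{\omega,\lambda}+q_2$ (the first order part $2\lambda(\partial_t-\omega\cdot\nabla_x)$ is skew-adjoint and $q_2$ is real). On the subspace $\{(P_{\omega,-\lambda}+q_2)v:\ v\in\mathcal C^2([0,T];\mathcal C^\infty_0(\Omega))\ \textrm{satisfying }\eqref{t1a}\}$ of $L^2(0,T;H^{-1}_\lambda(\R^n))$ I would define the linear form $L\big((P_{\omega,-\lambda}+q_2)v\big)=\int_Q Gv\,dxdt$, which is well defined because \eqref{t1b} forces $(P_{\omega,-\lambda}+q_2)v=0\Rightarrow v=0$. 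The estimate \eqref{t1b} then yields $|L((P_{\omega,-\lambda}+q_2)v)|\leq\norm{G}_{L^2(Q)}\norm{v}_{L^2((0,T)\times\R^n)}\leq C\norm{G}_{L^2(Q)}\norm{(P_{\omega,-\lambda}+q_2)v}_{L^2(0,T;H^{-1}_\lambda(\R^n))}$, so by Hahn--Banach and the Riesz representation in the Hilbert space $L^2(0,T;H^{-1}_\lambda(\R^n))$ (whose dual is $L^2(0,T;H^1_\lambda(\R^n))$) there is $w_2\in L^2(0,T;H^1_\lambda(\R^n))$ representing $L$ with $\norm{w_2}_{L^2(0,T;H^1_\lambda(\R^n))}\leq C\norm{G}_{L^2(Q)}$. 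Testing the representation identity against $v\in\mathcal C^\infty_0(Q)$ shows $(P_{\omega,\lambda}+q_2)w_2=G$ in $\mathcal D'(Q)$, and the bound \eqref{CGO01} is immediate since for $\lambda\geq1$ the $H^1_\lambda$ norm dominates both $\norm{w_2}_{L^2(0,T;H^1(\Omega))}$ and $\lambda\norm{w_2}_{L^2(Q)}$. This part closely parallels the construction behind Proposition \ref{p5}, the only change being the use of the time-only Carleman estimate \eqref{t1b}.

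The delicate point, which I would treat most carefully, is to recover the boundary condition $\tau_{0,2}w_2=0$ together with the membership $w_2\in e^{-\lambda(t+x\cdot\omega)}H_{\Box,*}(Q)$. Once $(P_{\omega,\lambda}+q_2)w_2=G\in L^2(Q)$ and $w_2\in L^2(0,T;H^1(\Omega))$ are known, I would first check that $e^{\lambda(t+x\cdot\omega)}w_2\in H_{\Box,*}(Q)$: indeed $\Box(e^{\lambda(t+x\cdot\omega)}w_2)=e^{\lambda(t+x\cdot\omega)}(G-q_2w_2)\in L^2(Q)$, using $q_2w_2\in L^2(Q)$ by the Sobolev embedding and the H\"older inequality as in Lemma \ref{l2}, while $e^{\lambda(t+x\cdot\omega)}w_2\in L^2(0,T;H^1(\Omega))$ is clear. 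This simultaneously gives $w_2\in e^{-\lambda(t+x\cdot\omega)}H_{\Box,*}(Q)$ and makes the trace $\tau_{0,2}w_2$ meaningful. I would then revisit the representation identity with a general test function $v$ satisfying \eqref{t1a} and integrate by parts: the compact support of $v$ in $\Omega$ annihilates the lateral contributions, the conditions $v(T,\cdot)=\partial_tv(T,\cdot)=0$ annihilate the terms at $t=T$, and $v(0,\cdot)=0$ leaves only the residual term $-\int_\Omega\partial_tv(0,x)\,w_2(0,x)\,dx$. Since the interior equation $(P_{\omega,\lambda}+q_2)w_2=G$ already accounts for $\int_QGv\,dxdt$, this residual boundary term must vanish for every admissible $v$; as $\partial_tv(0,\cdot)$ runs over a dense family of test functions on $\Omega$, we conclude $\tau_{0,2}w_2=0$, that is \eqref{CGO02}. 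The only genuine care needed is to justify these integrations by parts in the negative-order space, which I would carry out by density exactly as in the passage from Lemma \ref{l5} to Theorem \ref{t1}.
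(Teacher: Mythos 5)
Your proposal is correct and follows essentially the same route as the paper: reduce the ansatz \eqref{GOO1} to the conjugated equation \eqref{p6b} with source $-q_2(1-e^{-2\lambda t})$, solve it by duality from the Carleman estimate \eqref{t1b} via Hahn--Banach in $L^2(0,T;H^{-1}_\lambda(\R^n))$, and recover \eqref{CGO02} by testing against $v$ satisfying \eqref{t1a} with $\partial_t v|_{t=0}$ arbitrary. Your treatment is in fact slightly more detailed than the paper's, which asserts the membership $w_2\in e^{-\lambda(t+x\cdot\omega)}H_{\Box,*}(Q)$ and the trace identification without spelling out the integration by parts.
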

\begin{proof} We need to consider $w_2 \in L^2(0,T;H^1(\Omega))$ a solution of
\bel{p6b} P_{\omega,\lambda}w_2+q_2w_2=-e^{-\lambda (t+x\cdot\omega)}(\Box+q_2) (e^{\lambda (t+x\cdot\omega)}-e^{\lambda (-t+x\cdot\omega)})=-q_2(1-e^{-2\lambda t}),\ee
satisfying \eqref{CGO01}-\eqref{CGO02}.
Note that here, we use \eqref{p6b} and the fact that $P_{\omega,\lambda}w_2=e^{-\lambda (t+x\cdot\omega)}\Box e^{\lambda (t+x\cdot\omega)}w_2$ in order to prove  that $w_2\in e^{-\lambda (t+x\cdot\omega)}H_{\Box,*}(Q)$ and we  define $\tau_{0,2}w_2$ by $\tau_{0,2}w_2=e^{-\lambda x\cdot\omega}\tau_{0,2}e^{\lambda (t+x\cdot\omega)}w_2$.
We will construct such a function $w_2$ by applying estimate \eqref{t1b}. From now on, we fix $\lambda_2=\lambda_2''$.
Applying the Carleman estimate \eqref{t1b}, we define the linear form $\mathcal M$ on $$\mathcal I=\{P_{\omega,-\lambda}v+q_2v:\ v\in\mathcal C^2([0,T];\mathcal C^\infty_0(\Omega))\ \textrm{ satisfying \eqref{t1a}}\},$$ considered as a subspace of $L^2(0,T;H^{-1}_\lambda(\R^{n}))$, by
\[\mathcal M(P_{\omega,-\lambda}v+q_2v)=-\int_Qvq_2(1-e^{-2\lambda t})dxdt,\quad v\in\mathcal I.\]
Then, \eqref{t1b} implies
\[|\mathcal M(P_{\omega,-\lambda}v+q_1v)|\leq C\norm{q_2}_{L^2(Q)}\norm{P_{\omega,-\lambda}v+q_1v}_{L^2(0,T;H^{-1}_\lambda(\R^{n})},\quad v\in\mathcal I,\]
with $C>0$ independent of $\lambda$ and $v$.
Thus, by the Hahn Banach theorem we can extend $\mathcal M$ to a continuous linear form on $L^2(0,T;H^{-1}_\lambda(\R^{n}))$ still denoted by $\mathcal M$ and satisfying $\norm{\mathcal M}\leq C\norm{q_2}_{L^2(Q)}$. Therefore, there exists $w_2\in L^2(0,T;H^{1}_\lambda(\R^{n}))$ such that
\[\left\langle g,w_2\right\rangle_{L^2(0,T;H^{-1}_\lambda(\R^{n}),L^2(0,T;H^{1}_\lambda(\R^{n}))}=\mathcal M(g),\quad g\in L^2(0,T;H^{-1}_\lambda(\R^{n}).\]
Choosing $g=P_{\omega,-\lambda}v+q_2v$ with $v\in \mathcal C^\infty_0(Q)$ proves that $w_2$ satisfies $P_{\omega,\lambda}w_2+q_2w_1=-q_2(1-e^{-2\lambda t})$ in $Q$. Moreover, choosing $g=P_{\omega,-\lambda}v+q_1v$, with $v\in\mathcal I$ and $\pd_tv_{|t=0}$  arbitrary, proves that \eqref{CGO02} is fulfilled. Finally, using the fact that $\norm{w_2}_{L^2(0,T;H^{1}_\lambda(\R^{n}))}\leq \norm{\mathcal M}\leq C\norm{q}_{L^2(Q)}$ proves that $w_2$ fulfills \eqref{CGO01} which completes the proof of the proposition.\end{proof}
Using this proposition, we are now in position to complete the proof of Theorem \ref{thm2}.

\textbf{Proof of Theorem \ref{thm2}.} Let us remark that since Lemma \ref{l2} and Theorem \ref{c1} are valid when $q\in L^\infty(0,T;L^{p}(\Omega))$ one can easily extend Proposition \ref{p4} to the case $q_1\in L^\infty(0,T;L^{p}(\Omega))$. Therefore, in the context of this section, Proposition \ref{p4} holds true.
Combining Proposition \ref{p4} with Proposition \ref{p6}, we deduce existence of a solution $u_1\in H^1(Q)$ of $\Box u_1+q_1u_1=0$ in $Q$ taking the form \eqref{GO1}, with $w_1\in H^1(Q)$ satisfying \eqref{CGO11} for $j=1$, as well as the existence of  a solution $u_2\in L^2(0,T;H^1(\Omega))$ of $\Box u_2+q_2u_2=0$ in $Q$,  $\tau_{0,2}u_2=0$, taking the form \eqref{GOO1} with the term $w_2\in L^2(0,T;H^1(\Omega))$ satisfying \eqref{CGO01}. Repeating the arguments of the end of the proof of Theorem \ref{thm1} (see Subsection 4.4), we can  deduce the following orthogonality identity
\bel{thm2c}\lim_{\lambda\to+\infty}\int_Qqu_1u_2dxdt=0.\ee
Moreover, one can check that
$$\int_Qqu_1u_2dxdt=\int_{\R^{1+n}}q(t,x)e^{-i\xi\cdot(t,x)}dxdt+\int_{Q}Y_\lambda(t,x)dxdt,$$
with
$$Y_\lambda(t,x)=q[e^{-2\lambda t}e^{-i(t,x)\cdot\xi}+e^{-i(t,x)\cdot\xi}w_2+w_1+w_1w_2].$$
Combining \eqref{CGO11}, \eqref{CGO01} with the fact that
$$\int_Q|q(t,x)|\abs{e^{-2\lambda t}e^{-i(t,x)\cdot\xi}}dxdt\leq\norm{q}_{L^2(Q)}\left(\int_0^{+\infty}e^{-2\lambda t}dt\right)^{\frac{1}{2}}\leq \norm{q}_{L^2(Q)}\lambda^{-\frac{1}{2}},$$
we deduce that
$$\lim_{\lambda\to+\infty}\int_{Q}Y_\lambda(t,x)dxdt=0.$$
Combining this asymptotic property with \eqref{thm2c}, we can conclude in a similar way to Theorem \ref{thm1} that $q_1=q_2$. \qed

\section*{Acknowledgments}

 The work of the first author is supported by the NSFC grant (No. 11671028), NSAF grant (No. U1530401) and the 1000-Talent Program of Young Scientists in China. The second author would like to thank Pedro Caro for his remarks and  fruitful discussions about this problem. The second author would like to thank the Beijing Computational Science Research Center, where part of this article was written, for its kind hospitality

\end{document}